\documentclass[reqno,11pt,a4paper]{article}
\usepackage[utf8]{inputenc}
\usepackage[english]{babel}
\usepackage[textheight=24 true cm, totalwidth=16.5 true cm,footskip=1.7cm]{geometry}

\usepackage{color}

\usepackage{amsthm,amsmath,amsfonts,latexsym,amssymb,dsfont}
\usepackage{hyperref}
\hypersetup{colorlinks=true,
    linkcolor=blue,
    filecolor=magenta,
    urlcolor=cyan,
    citecolor=blue,
    pdfpagemode=FullScreen,
    }
 \usepackage[capitalise]{cleveref}

\def\llbracket{[\![}
\def\rrbracket{]\!]}

\def\2stepset#1#2#3#4#5#6#7#8{%
  \begin{picture}(20,20)(-10,-10)
    \put(0,0){\ifx1#1\thicklines\let\x\vector\else\thinlines\let\x\line\fi\x(-1,-1){10}}
    \put(0,0){\ifx1#2\thicklines\let\x\vector\else\thinlines\let\x\line\fi\x(0,-1){10}}
    
\put(0,0){\ifx1#3\thicklines\let\x\vector\else\thinlines\let\x\line\fi\x(1,-1){10}}

\put(0,0){\ifx1#4\thicklines\let\x\vector\else\thinlines\let\x\line\fi\x(-1,0){10}}

\put(0,0){\ifx1#5\thicklines\let\x\vector\else\thinlines\let\x\line\fi\x(1,0){10}}
    \put(0,0){\ifx1#6\thicklines\let\x\vector\else\thinlines\let\x\line\fi\x(-1,1){10}}
    \put(0,0){\ifx1#7\thicklines\let\x\vector\else\thinlines\let\x\line\fi\x(0,1){10}}
           \put(0,0){\ifx1#8\thicklines\let\x\vector\else\thinlines\let\x\line\fi\x(1,1){10}}
  \end{picture}}

\renewcommand{\geq}{\geqslant}
\renewcommand{\leq}{\leqslant}

\newtheorem{definition}{Definition}

\newtheorem{exa}[definition]{Example}

\newtheorem{remark}[definition]{Remark}

\newtheorem{theorem}[definition]{Theorem}
\newtheorem{proposition}[definition]{Proposition}
\newtheorem{lemma}[definition]{Lemma}
\newtheorem{corollary}[definition]{Corollary}

\numberwithin{equation}{section}


\makeatletter
\def\iddots{\mathinner{\mkern1mu\raise\p@
\vbox{\kern7\p@\hbox{.}}\mkern2mu
\raise4\p@\hbox{.}\mkern2mu\raise7\p@\hbox{.}\mkern1mu}}
\makeatother

\newenvironment{ack}{\bigskip\noindent{\em Acknowledgements.}}{}


\def\cA{\mathcal{A}}
\def\cB{\mathcal{B}}
\def\cC{\mathcal{C}}
\def\cD{\mathcal{D}}
\def\cE{\mathcal{E}}

\def\cS{\mathcal{S}}



\def\C{\mathbb{C}}

\def\N{\mathbb{N}}

\def\Q{\mathbb{Q}}
\def\R{\mathbb{R}}

\def\Z{\mathbb{Z}}



\def\l{\left}
\def\r{\right}

\def\wtilde{\widetilde}


\setlength{\marginparwidth}{2.3cm}
\usepackage[textwidth=1.8cm]{todonotes}


\title{Singular walks in the quarter plane and Bernoulli numbers}

\author{Alin~Bostan\footnote{Inria, Universit\'e Paris-Saclay,  Palaiseau, France.}, Lucia~Di Vizio\footnote{CNRS, Universit\'e Paris-Saclay, UVSQ, Laboratoire de mathématiques de Versailles, Versailles, France.}
 and Kilian~Raschel\footnote{CNRS, Laboratoire Angevin de Recherche en Math\'ematiques, Universit\'e d'Angers, Angers, France. 
\\
AB is partially supported by the French--Austrian project EAGLES (ANR-22-CE91-0007 \& FWF I6130-N). \\
KR is partially supported by the RAWABRANCH project ANR-23-CE40-0008, funded by the French National Research Agency, and by Centre Henri Lebesgue, programme ANR-11-LABX-0020-01.}}


\begin{document}

\maketitle

\begin{abstract}
We consider singular (aka genus $0$) walks in the quarter plane and their associated generating functions $Q(x,y,t)$, which enumerate the walks starting from the origin, of fixed endpoint (encoded by the spatial variables $x$ and $y$) and of fixed length (encoded by the time variable $t$). We first prove that the previous series can be extended up to a universal value of $t$ (in the sense that this holds for all singular models), namely $t=\frac{1}{2}$, and we provide a probabilistic interpretation of $Q(x,y,\frac{1}{2})$. As a second step, we refine earlier results in the literature and show that $Q(x,y,t)$ is indeed differentially transcendental for any $t\in(0,\frac{1}{2}]$. Moreover, we prove that $Q(x,y,\frac{1}{2})$ is  strongly differentially transcendental. 
As a last step, we show that for certain models the series expansion of $Q(x,y,\frac{1}{2})$ is directly related to Bernoulli numbers. This provides a second proof of its strong differential transcendence. 
\end{abstract}

\setcounter{tocdepth}{1}
\small
\tableofcontents
\normalsize

\section{Introduction}

In combinatorics, scholars organize enumerative data as coefficients of formal power series, usually called \emph{ordinary generating functions}, OGF, for short. 
In the impossibility of an exact formula for these coefficients, one usually studies their asymptotics and the singularities of their OGFs, or 
tries to establish whether this latter series is solution of a certain type of functional equations. 
It is particularly useful to understand whether an OGF is \emph{D-finite}, i.e., solution of a linear differential equation with rational functions coefficients: 
in this case the coefficients of the OGF are P-recursive, i.e., they satisfy particular linear recurrence relations. 
The lesson of the applications of Galois theory of functional equations to combinatorics is that it is frequently easier to prove that an OGF \emph{is not} a solution to a non-linear differential equation with rational function coefficients, rather than to prove that it is not D-finite, although the latter condition is weaker. 
It has indeed been proven than many OGFs are not only non-D-finite, but even \emph{differentially transcendental}, i.e., they are not solutions of any non-linear differential equation 
(we will be more precise soon). 
In 2003, Klazar \cite{Klazar03} proved that the OGF of Bell numbers (which count the number of partitions of a set of given cardinality)\ is in fact differentially transcendental over any field of meromorphic functions in a neighborhood of~$0$.
In a  previous work \cite{bostan-divizio-raschel-BELL}, we proved that Klazar's result is actually an instance of a rather 
common phenomenon and we called such OGFs \emph{strongly differentially transcendental}. While mathematical intuition says that being strongly differentially transcendental is stronger than being merely differentially transcendental, it is not immediately clear what kind of information is encoded in such a property. 

In this paper we consider the classical models of \emph{singular walks} in the quarter plane (to be introduced below)\ and show, extending the results in~\cite{DHRS0}, that for all specializations of the length-parameter~$t$ in $(0,1/2)$ we obtain the usual differential transcendence of the associated generating function $Q(x,y,t)$, while for $t=1/2$
we obtain a strongly differentially transcendental series. Using a probabilistic interpretation based on Green functions of random walks, we prove that the point $t=1/2$ is indeed very special; heuristically, this connects a gap in the various kinds of differential transcendence 
to a critical probabilistic phenomenon in the underlying killed random walk. 

\paragraph*{State of the art.}
We consider the classical models of quarter plane walks in combinatorics: Given a set $\cS$ of prescribed steps 
(see examples in Table~\ref{tab:list}), describing the possible moves of the walk, one of the main objectives is to enumerate, either exactly or asymptotically, the number of walks starting at the origin $(0,0)$, staying in the quarter plane $\N^2=\{0,1,2,\ldots\}^2$, with jumps in the step set $\cS$, of length $n$, and ending at the point $(i,j)$; see Figure~\ref{fig:example_path} for an example of path. 
Denote by $\#_{\cS}\{(0,0)\stackrel{n}{\to} (i,j)\}$
the number of such walks.
The associated generating function is
\begin{equation}
\label{eq:def_generating_function}
   Q_\cS(x,y,t) = \sum_{i,j,n\geq0}\#_{\cS}\{(0,0)\stackrel{n}{\to} (i,j)\} x^iy^jt^n \in\N[x,y]\llbracket t\rrbracket\subset\Z\llbracket x,y,t\rrbracket.
\end{equation}
Over the past two decades, these models have attracted considerable attention from the mathematical community. The main questions are: First, is it possible to solve this enumeration problem in closed form? If not, asymptotically (e.g., when the path length~$n$ tends to infinity)?
What is the ``complexity'' of the generating function~\eqref{eq:def_generating_function}, as a function of the step set~$\cS$? In other words, can the function $Q_\cS(x,y,t)$ be rational, algebraic, D-finite or differentially algebraic?

Although these questions can be addressed \textit{a priori} for any step set~$\cS$, it has been shown by Bousquet-M\'elou and Mishna in~\cite{BoMi10} and in subsequent works that the class of nearest neighbor walks already gives rise to an impressive variety of possible behaviors.

\medskip

\begin{table}[ht!]\centering
$\begin{array}{ccccc}
    \qquad\textcolor{red}{\2stepset00100101}\qquad
        & \qquad\textcolor{red}{\2stepset00101110}\qquad
        & \qquad\textcolor{red}{\2stepset00101111}\qquad
        & \qquad\textcolor{red}{\2stepset00100110}\qquad
        & \qquad\textcolor{red}{\2stepset00100111} \\
    \qquad\mathcal{A}\qquad
        & \qquad\mathcal{B}\qquad
        & \qquad\mathcal{C}\qquad
        & \qquad\mathcal{D}\qquad
        & \qquad\mathcal{E}
\end{array}$
\begin{quote}
\caption{The five main models considered in this work, called singular models. Among them, the first three
$\mathcal{A} = \{\textsf{NW},             \textsf{NE},             \textsf{SE}\},$
$\mathcal{B} = \{\textsf{NW}, \textsf{N},              \textsf{E}, \textsf{SE}\},$
$\mathcal{C} = \{\textsf{NW}, \textsf{N}, \textsf{NE}, \textsf{E}, \textsf{SE}\}$
are symmetric models with respect to the first diagonal,
while the last two
$\mathcal{D} = \{\textsf{NW}, \textsf{N},                          \textsf{SE}\},$
$\mathcal{E} = \{\textsf{NW}, \textsf{N}, \textsf{NE},             \textsf{SE}\}$
are not.}
\end{quote}
\label{tab:list}
\end{table}
According to \cite{BoMi10}, the set of nearest neighbor walks can be divided into two subclasses, the \emph{singular} and the \emph{non-singular} models. By definition, singular models are those for which the step set $\cS$ is contained in a linear half-plane. Bousquet-M\'elou and Mishna proved that there are exactly five singular models that are truly two-dimensional, not equivalent, and can cross the quarter plane on both axes.
They are here called $\mathcal{A}$--$\mathcal{E}$
(following the convention of \cite{MeMi14}),
and they are shown in Table~\ref{tab:list}. 
Note that, unlike models $\mathcal{D}$ and $\mathcal{E}$, the step sets of models $\mathcal{A}$, $\mathcal{B}$ and 
$\mathcal{C}$ are symmetric.

These models are sometimes called \emph{directed} (see \cite{MiRe09}), in the sense that they cannot return to the origin once they have left it; see Figure~\ref{fig:example_path} for an example of a path, see also Figure~\ref{fig:recursive_cons}.
Singular models are also called
\emph{genus zero models} (see \cite[Chap.~6]{FIM17}) for the following reason: in all five cases appearing on Table~\ref{tab:list}, defining the
characteristic (generating/inventory)\ polynomial
\begin{equation}
    \label{eq:def_inventory}
    {\chi_{\cS}(x,y) := {\sum_{(i,j)\in\cS}x^{i}y^{j}}},
\end{equation}
the kernel polynomial
	\begin{equation}
	\label{eq:kernel}
	{K_{\cS}(x,y,t) := xy\bigl(1-t \cdot \chi_{\cS}(x,y)\bigr)}
	\end{equation}
is biquadratic in $x$ and $y$,
irreducible in $\mathbb{C}(t)[x,y]$,
which defines a pencil of algebraic curves of genus zero\footnote{This can be seen graphically using an old theorem of Henry Frederick Baker~\cite{Baker1893} (see also~\cite{Beelen09}), which says that the genus of a plane curve defined by a bivariate polynomial is upper bounded by the number of grid points inside the Newton polygon of the polynomial. In our case, the corresponding Newton polygons do not contain any grid points.} for generic~$t$. In what follows, most of the time, $\cS$ will be taken as one of the five step sets listed in Table~\ref{tab:list}. When no ambiguity can arise, we will simplify our notation by removing the step set $\cS$ and the variable $t$; therefore $K(x,y)$ and $Q(x,y)$ will stand for $K_\cS(x,y,t)$ and $Q_\cS(x,y,t)$, respectively.

\begin{figure}[ht!]
    \centering
    \includegraphics[scale=0.45]{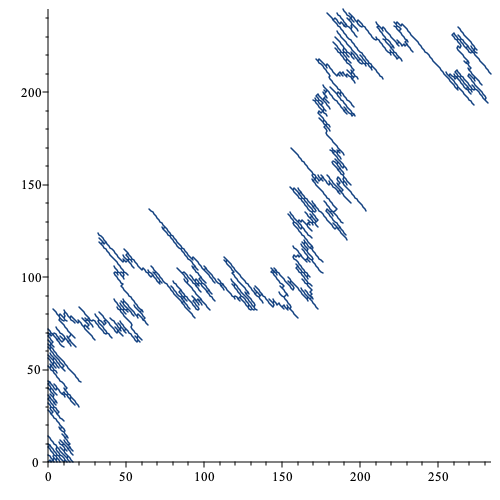}
    \begin{quote}
    \caption{An example of path for the model $\cA$ (10,000 steps). According to the classical law of large numbers, almost every path tends to infinity along the first diagonal, with antidiagonal ``Brownian'' fluctuations.}
    \end{quote}
    \label{fig:example_path}
\end{figure}

In the probabilistic setting (i.e., when one specializes $t=\frac{1}{\#\cS} $, where $\#\cS$ denotes the cardinality of the step set $\cS$), singular models were first identified in the book \cite[Chap.~6]{FIM17}. More precisely, genus zero models are characterized in \cite[Thm~6.1.1 and \S6.4]{FIM17}, where various properties of singular models are derived, such as functional equations and explicit expressions for the generating functions, boundary value problems, uniformizations of the associated kernel curve~\eqref{eq:kernel}, etc. 
See~\cite{DracDrey,FaIa-21} for further insights into possible uniformizations.

In the combinatorial literature, singular models first appeared in Bousquet-M\'elou and Mishna's classification~\cite{BoMi10} of walks with small steps in the quarter plane. They proved that all singular models admit an infinite group (in a sense, this group captures the symmetries of the step set $\cS$). See also \cite{Mishna-09} for a classification of quarter plane walks when the step set has cardinality~$3$.

In \cite{MiRe09}, Mishna and Rechnitzer consider the models $\cA$ and $\cD$ and express the generating functions $Q(x,0,t)$, $Q(0,y,t)$ and $Q(x,y,t)$ as infinite sums of algebraic functions. As a consequence, they prove that $Q(1,1,t)$ is not D-finite, i.e., it does not satisfy any linear differential equation in~$\frac{d}{dt}$ with polynomial coefficients in~$t$. The main argument is to show that $Q(1,1,t)$ has an infinite number of singularities, which is classically incompatible with D-finiteness. Melczer and Mishna extended these results to the models $\cB$, $\cC$ and $\cE$~\cite{MeMi14}.

Singular models are further considered in the work \cite{DHRS0} by Dreyfus, Hardouin, Roques and Singer. The authors show that if $0 < t < \frac{1}{\#\cS}$ is a \emph{transcendental} number, then $Q(x, y, t)$ is $x$- and $y$-differentially transcendental. As in the probabilistic literature, weights are allowed on the steps in their article~\cite{DHRS0}. The main technical novelty in~\cite{DHRS0} is the use of the Galois theory of $q$-difference equations, following their work~\cite{DHRS1} in the genus-one setting. Note that in~\cite{DHRS0} the weights are normalized so that their sum is~$1$ (thus they define transition probabilities); comparing their results with ours therefore requires changing the variable $t\mapsto \frac{t}{\#\cS}$, in other words their assumption $t\in(0,1)$ is equivalent to $t\in (0,\frac{1}{\#\cS})$ in our notation.

Finally, returning to the probabilistic literature, Janse van Rensburg, Prellberg and Rechnitzer \cite{Prellberg-08} consider self-avoiding singular walks in the quarter plane, compute their growth constants and prove functional equations for the associated generating functions. In \cite{Po-11} Poznanovi{\'c} obtains an interesting bijection to prove combinatorially some results in~\cite{Prellberg-08}. In~\cite{HoRaTa-23}, Hoang, Tarrago and the last author of this article compute positive harmonic functions with Dirichlet boundary conditions for singular walks (with possible large jumps). This last paper also contains asymptotic estimates of the Green function and the description of the Martin boundary.

\paragraph*{Our contributions and techniques.}
In our first result we prove that the function $Q(x,y)$ converges for $|x|,|y|<1$ and $|t|<\frac{1}{2}$ and that
it can be defined as a series in $\mathbb R\llbracket x,y\rrbracket$
up to $ t=\pm\frac{1}{2}$.
Furthermore, at $t=\frac{1}{2}$, its coefficients are all rational.

\begin{theorem}[{See Theorem~\ref{thm:cont_1/2} below}]
For any model in Table~\ref{tab:list}, the power series
\begin{equation*}
    Q(x,y,\tfrac{1}{2}) = \sum_{i,j,n\geq0}\frac{\#_{\cS}\{(0,0)\stackrel{n}{\to} (i,j)\}}{2^n} x^iy^j\in\mathbb Q\llbracket x,y\rrbracket
\end{equation*}
is well defined, meaning that all its coefficients are finite and define rational numbers.
\end{theorem}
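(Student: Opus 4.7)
The plan is to exploit the defining feature of singular models: every step $(a, b) \in \cS$ in Table~\ref{tab:list} satisfies $a + b \geq 0$, so that along any walk the level $x_k + y_k$ is non-decreasing. In particular, any walk from $(0, 0)$ ending at $(i, j)$ after $n$ steps stays in the finite triangle $T_L := \{(x,y) \in \N^2 : x+y \leq L\}$ with $L := i + j$. Let $M_L$ be the $|T_L| \times |T_L|$ integer matrix of single-step transitions within $T_L$, whose entry at position $((x,y),(x',y'))$ equals $1$ when $(x'-x, y'-y) \in \cS$ and both points lie in $T_L$, and $0$ otherwise. Then $\#_\cS\{(0,0) \stackrel{n}{\to} (i,j)\} = (M_L^n)_{(0,0),(i,j)}$, hence
\[
Q_{i,j}(t) := \sum_{n \geq 0} \#_\cS\{(0,0) \stackrel{n}{\to} (i,j)\}\, t^n \;=\; \bigl((I - t\, M_L)^{-1}\bigr)_{(0,0),(i,j)}
\]
is a rational function of $t$ with coefficients in $\Q$. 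It is regular at $t = \tfrac{1}{2}$ and the series converges there (with value in $\Q$) provided that the spectral radius $\rho(M_L)$ is strictly less than $2$, which is what we aim to prove.

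The key step is to stratify $T_L = \bigsqcup_{l=0}^L L_l$ with $L_l := \{(x,y) \in \N^2 : x + y = l\}$, and observe that $M_L$ is block upper triangular with respect to this decomposition (because $a + b \geq 0$ for every step). Therefore $\mathrm{Spec}(M_L)$ is the multiset union of $\mathrm{Spec}(M_l)$ over the diagonal blocks $M_l$. The crucial common feature of the five singular step sets is that the steps with $a + b = 0$ are exactly NW and SE, and both belong to each of $\cA, \cB, \cC, \cD, \cE$. Hence $M_l$ is the adjacency matrix of the path graph $P_{l+1}$ on $L_l$ (identified with $\{0, 1, \ldots, l\}$ through the $x$-coordinate; NW acts as $k \mapsto k-1$ and SE as $k \mapsto k+1$), whose spectrum is the classical $\{2\cos(k\pi/(l+2)) : 1 \leq k \leq l+1\} \subset (-2, 2)$.

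Taking the union over $0 \leq l \leq L$ gives $\mathrm{Spec}(M_L) \subset (-2, 2)$, so $\rho(M_L) < 2$ and $\tfrac{1}{2}$ lies strictly inside the disk of convergence of the power series $Q_{i,j}(t)$. Consequently $Q_{i,j}(\tfrac{1}{2}) \in \Q$ is finite for every $(i,j)$, yielding $Q(x, y, \tfrac{1}{2}) \in \Q\llbracket x, y \rrbracket$. I foresee no serious obstacle in executing this plan: the confinement to a finite triangle, the block-triangular decomposition, and the identification of the diagonal blocks with path graph adjacencies (whose spectra automatically lie in $(-2, 2)$) are all immediate consequences of the common level-preserving structure shared by the five singular step sets. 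The fact that the uniform bound $\rho(M_L) < 2$ (rather than, say, $<\#\cS$) is exactly what singles out the universal threshold $t = \tfrac{1}{2}$ of the theorem.
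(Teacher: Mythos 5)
Your proposal is correct. The crucial fact you use --- that the level-preserving part of the dynamics is the path graph on each antidiagonal, whose spectrum lies in $(-2,2)$ --- is exactly the engine of the paper's proof as well: the paper's matrices $F_n$ are the resolvents $(I-tM_l)^{-1}$ of those same path graphs, and the bound $2\cos\bigl(\frac{\pi}{l+2}\bigr)<2$ is what makes every $W_k(P,Q,t)$ regular at $t=\frac12$. The packaging, however, is genuinely different. The paper decomposes each walk explicitly into excursions on the antidiagonals $S_k$ separated by advancing jumps, and writes the generating function of walks ending on $S_k$ as an explicit finite product $t^k A_3F_3\cdots A_{2k+1}F_{2k+1}$ (and its analogues for $\cB$--$\cE$), with the entries of $F_n$ given by ratios of Chebyshev polynomials; well-definedness and rationality at $t=\frac12$ are then read off term by term, and the explicit residue computation even shows the poles at $\bigl(2\cos\frac{\pi}{k+2}\bigr)^{-1}$ are genuine, which the paper needs later to rule out continuation past $t=\frac12$. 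You instead assemble a single block-triangular transition matrix $M_L$ on the truncated quadrant and invoke only the spectral radius: $\mathrm{Spec}(M_L)\subset(-2,2)$ gives convergence of $\sum_n (M_L^n)_{(0,0),(i,j)}t^n$ at $t=\frac12$, and invertibility of $I-\frac12 M_L$ over $\Q$ gives rationality. This is shorter and avoids all explicit Chebyshev manipulations, at the cost of producing no closed formulas (which the paper reuses for the comparison with Mishna--Rechnitzer and for the Bernoulli-number identities) and no information about where the obstruction to continuing beyond $t=\frac12$ actually sits. One small point worth making explicit if you write this up: the identification $\#_\cS\{(0,0)\stackrel{n}{\to}(i,j)\}=(M_L^n)_{(0,0),(i,j)}$ requires observing that a quarter-plane walk ending at level $L=i+j$ automatically never leaves $T_L$ (monotonicity of the level), and conversely that confinement to $T_L$ imposes no constraint beyond confinement to $\N^2$; you state the first half but the equality of counts uses both directions.
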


In order to prove the above theorem, we use, inspired by~\cite{MiRe09}, mainly a combinatorial approach to recursively compute the number of paths
$\#_{\cS}\{(0,0)\stackrel{n}{\to} (i,j)\}$. Obviously, any path of a singular model can be decomposed as a sequence of walks on antidiagonal segments
\begin{equation}
\label{eq:segments-intro}
    S_k = \{(i,j)\in \mathbb Z^2: i\geq 0,\, j\geq 0,\, i+j=k\}=\{(k,0),(k-1,1),\ldots,(0,k)\},
\end{equation}
followed by diagonal jumps, see Figure~\ref{fig:recursive_cons}, and also Figure~\ref{fig:example_path}. This decomposition can be used to derive a fairly simple expression for the number of walks, using basic linear algebra. See Section~\ref{sec:well_defined}.

In Section~\ref{sec:main},  
we shall use that the generating function $Q(x,y)\equiv Q_\cS(x,y,t)$ in \eqref{eq:def_generating_function} satisfies the functional equation
\begin{equation}
\label{eq:kernel-funct-eq}
    K(x,y)Q(x,y)=xy-tx^2Q(x,0)-ty^2Q(0,y),
\end{equation}
with the kernel $K(x,y)$ as in~\eqref{eq:kernel}. By manipulating this equation as in~\cite{FIM17} and~\cite{DHRS0}, but with slightly different parametrizations, 
we get a new functional equation satisfied by the \emph{section} $Q(x,0)$.
This new equation is a $q$-difference equation for $t\in\bigl(0,\frac{1}{2}\bigr)$ 
and a finite difference equation for $t=\frac{1}{2}$. 
Using the Ishizaki-Ogawara theorem (which we recall in Theorem~\ref{thm:Ogawara}) 
and Theorem~\ref{thm:OurTheorem;)} we conclude (for the precise definitions see Section~\ref{sec:main}): 

\begin{theorem}\label{thm:main-Galois-intro}
For any model $\cS$ in the list $\mathcal{A}$, $\mathcal{B}$, $\mathcal{C}$, $\mathcal{D}$, $\mathcal{E}$ and
for any $t\in\bigl(0,\frac 1 2\bigr]$ the formal power series
$Q(x,0)\equiv Q_\cS(x,0,t)$ (resp.\ $Q(0,y)$, $Q(x,y)$)
is differentially transcendental over $\C(x)$
(resp.\ $\C(y)$, $\C(x,y)$).
For $t=\frac 12$, $Q(x,0)$ (resp.\ $Q(0,y)$, $Q(x,y)$)
is strongly differentially transcendental.
\end{theorem}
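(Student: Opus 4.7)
The plan is to run the kernel method on~\eqref{eq:kernel-funct-eq} along a genus-zero uniformization of $K_\cS(x,y,t)=0$, producing a telescoping equation for $Q(x,0)$ under a Möbius automorphism $\sigma_t$ of $\mathbb{P}^1$, and then to exploit the following dichotomy: for $t\in(0,\frac{1}{2})$, $\sigma_t$ is a $q$-dilation with $|q(t)|\neq 1$, so the Ishizaki--Ogawara theorem (Theorem~\ref{thm:Ogawara}) yields ordinary differential transcendence; at $t=\frac{1}{2}$ the two fixed points of $\sigma_t$ collide, $\sigma_{1/2}$ becomes a translation, and Theorem~\ref{thm:OurTheorem;)} upgrades the conclusion to the strong version.

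First I would fix a rational parametrization $s\mapsto(x(s),y(s))$ of the kernel curve depending explicitly on~$t$, so that the two involutions $\iota_x$ and $\iota_y$ (swapping the $y$-roots and the $x$-roots of $K$, respectively) become Möbius involutions of the $s$-line. Their composition $\sigma_t:=\iota_y\circ\iota_x$ has infinite order, since the group of the walk is infinite for all singular models. The refinement with respect to~\cite{DHRS0} is to track its multiplier explicitly: for $t\in(0,\frac{1}{2})$ the two fixed points of $\sigma_t$ are distinct, so after normalization $\sigma_t(s)=q(t)\,s$ with $|q(t)|\neq 1$, while precisely at $t=\frac{1}{2}$ these fixed points collide and $\sigma_{1/2}$ is parabolic, conjugate on an affine chart to $s\mapsto s+c$. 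Substituting the two branches $y=Y_\pm(x)$ of $K=0$ into~\eqref{eq:kernel-funct-eq}, subtracting and pulling back to the $s$-line delivers an inhomogeneous equation
\[
    \Phi(\sigma_t(s))-\Phi(s)=R_t(s),
\]
where $\Phi$ is built from $Q(x(s),0)$ and $Q(0,y(s))$ and $R_t\in\C(s)$ is explicit and non-zero. Any differential-algebraic relation satisfied by $Q(x,0)$ over $\C(x)$ pulls back to one for $\Phi$ over $\C(s)$.

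For $t\in(0,\frac{1}{2})$, the Ishizaki--Ogawara theorem then forces any meromorphic, differentially algebraic solution of the telescoping equation to be rational in~$s$; a direct pole and residue comparison on either side shows that no rational $\Phi$ can absorb $R_t$, ruling out differential algebraicity and proving the first statement of the theorem. The conclusion for $Q(0,y)$ follows by symmetry for the symmetric models $\cA$, $\cB$, $\cC$ and by running the argument with the roles of $x$ and $y$ exchanged for $\cD$ and $\cE$, and the conclusion for $Q(x,y)$ then follows from~\eqref{eq:kernel-funct-eq}. At $t=\frac{1}{2}$, the same machinery with $\sigma_{1/2}$ a translation and with Theorem~\ref{thm:OurTheorem;)} in place of Ishizaki--Ogawara upgrades the statement to strong differential transcendence, the strengthening reflecting the fact that the obstruction to solving $\Phi(s+c)-\Phi(s)=R_{1/2}(s)$ persists over any ring of analytic functions stable under the shift.

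The principal technical obstacle is to carry the parametrization through the degeneration $t\to\frac{1}{2}$ while keeping the multiplier $q(t)$, the fixed points, and the residue data of $R_t$ under explicit control, model by model. In the non-symmetric cases $\cD$ and $\cE$ in particular, one must verify that the obstruction preventing the telescoping equation from having a rational (resp.\ sufficiently regular) solution does not accidentally vanish at the critical value $t=\frac{1}{2}$. This is where the passage from ordinary to strong differential transcendence is really paid for, and it is the Galois-theoretic counterpart of the probabilistic criticality of $t=\frac{1}{2}$ emphasised in the introduction.
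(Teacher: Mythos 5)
Your proposal is correct and follows essentially the same route as the paper: a rational uniformization of the genus-zero kernel curve turning the kernel equation into a telescoping equation $\Phi(\sigma_t(s))-\Phi(s)=R_t(s)$, with $\sigma_t$ a $q$-dilation (here $q=v^2$ for $t=v/(1+v^2)$, $v\in(0,1)$) handled by the Ishizaki--Ogawara theorem, degenerating to a parabolic homography at $t=\tfrac12$ handled by Theorem~\ref{thm:OurTheorem;)}, plus a model-by-model pole/orbit analysis to exclude rational solutions. The paper likewise uses separate parametrizations at $t=\tfrac12$ (where the kernel may even become reducible, as for model $\cA$) and deduces the statements for $Q(0,y)$ and $Q(x,y)$ from the kernel equation exactly as you describe.
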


This result extends the main theorem of \cite{DHRS0}, which is only proven for transcendental values of the variable $t$, and only for $t\in (0,\frac{1}{\#\cS})$. Note that the paper \cite{DHRS0} considers models with weights. 
We recover their result in full generality only for the models with step sets of cardinality $3$ (namely $\cA$ and $\cD$), see Section~\ref{subsec:weights}, as we have three degrees of freedom in our parameters.

Let us give some heuristics about the change in behavior at $t=\frac{1}{2}$ implied by Theorem~\ref{thm:main-Galois-intro}.
First, one could argue that if a $q$-difference equation associated with a homography with two fixed points degenerates into a 
finite difference equation, whose operator has only one fixed point, the confluence of two special points 
is expected to increase the complexity of the solution. 
We can propose another explanation. Namely 
we show in Section~\ref{sec:probab} that $Q(x,y,t)$ admits a natural probabilistic  
as the Green function of a certain random walk, using the idea of the Cram\'er transform. As it turns out,
the latter transform becomes singular at $t=\frac{1}{2}$, so one can also interpret the
point $t=\frac{1}{2}$ as critical from this alternative, probabilistic point of view.

We conclude the paper with a small excursion in special function theory, showing that 
we can compute explicitly the rational coefficients appearing in $Q(x,y,\tfrac{1}{2})$
in the case of the model $\mathcal{A}$. In fact, \emph{Bernoulli numbers} intriguingly show up:
\begin{theorem}
\label{thm:explicit_A}
For model $\mathcal{A}$ we have	
\[ Q_\mathcal{A}(x,0,\pm \tfrac{1}{2}) =
\sum_{n \geq 0}  \frac{T_n}{4^n} \, x^{2n},
\]
where
$(T_n)_{n \geq 0} = \bigl(1, 2, 16, 272, 7936,\ldots\bigr)$
is the sequence of the \emph{tangent numbers} (\href{https://oeis.org/A000182}{A000182}).
Equivalently,
\[ Q_\mathcal{A}(x,0,\tfrac{1}{2}) = 2 \, \sum_{n \geq 0}   (2^{2n+2}-1) \frac{(-1)^n}{n+1} B_{2n+2} x^{2n},\]
where
$(B_n)_{n \geq 0} = \bigl( 1,-\frac12,\frac16,0, -\frac{1}{30},0,\frac{1}{42}, \ldots \bigr)$
is the sequence of Bernoulli numbers.
\end{theorem}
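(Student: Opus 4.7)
The plan is to specialize the kernel method to model~$\mathcal{A}$ at $t = \tfrac{1}{2}$, derive a finite-difference functional equation for $Q(x) := Q_\mathcal{A}(x, 0, \tfrac{1}{2})$, and verify that the tangent-number series satisfies it. For model~$\mathcal{A}$ the kernel $K_\mathcal{A}(x,y,\tfrac{1}{2}) = xy - \tfrac{1}{2}(x^2 + y^2 + x^2 y^2)$ is a quadratic in $y$ with roots $Y_\pm(x) = x/(1 \mp ix)$. Substituting $y = Y_+(x)$ in \eqref{eq:kernel-funct-eq} and using the diagonal symmetry $Q_\mathcal{A}(x,0) = Q_\mathcal{A}(0,x)$ yields
\[
Q(x) + \frac{1}{(1-ix)^2}\, Q\!\left(\frac{x}{1-ix}\right) = \frac{2}{1-ix}.
\]
The change of variable $x = -i/v$ conjugates the Möbius map $x \mapsto x/(1-ix)$ into the unit shift $v \mapsto v-1$, and with $H(v) := Q(-i/v)/v^2$ the equation takes the neat form
\[
H(v) + H(v-1) = \frac{2}{v(v-1)}.
\]
Writing $Q(x) = \sum_{k \geq 0} q_k\, x^k$ and equating coefficients in the functional equation produces a triangular recurrence for the $q_k$ with non-vanishing diagonal, so the equation together with $Q(0) = 1$ determines $Q$ uniquely in $\C\llbracket x\rrbracket$.

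It therefore suffices to verify that $R(x) := \sum_{n \geq 0} T_n\, x^{2n}/4^n$ satisfies the same functional equation. The key ingredient is the identity
\[
\sum_{n \geq 0} \frac{(-1)^n T_n}{4^n}\, \frac{z^{2n+1}}{(2n+1)!} = 2\tanh(z/2),
\]
obtained from $\tan(w) = \sum_n T_n\, w^{2n+1}/(2n+1)!$ evaluated at $w = iz/2$ together with $\tan(iw) = i\tanh(w)$. Multiplying both sides by $e^z$ and using $\tanh(z/2) = (e^z - 1)/(e^z + 1)$ gives the closed form
\[
2\tanh(z/2) \cdot e^z = \frac{2\, e^z (e^z - 1)}{e^z + 1} = 2 e^z - 2 - 2\tanh(z/2).
\]
Reading off the coefficient of $z^k/k!$ on both sides for each $k \geq 1$ produces two families of binomial identities (one for each parity of $k$). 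A direct computation shows that, after setting $H_R(v) := R(-i/v)/v^2$ and expanding in powers of $v^{-1}$, these two families are exactly the coefficient-wise identities equivalent to $H_R(v) + H_R(v-1) = 2/(v(v-1))$; uniqueness then forces $R = Q$.

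The Bernoulli-number expression follows by substituting the classical identity $T_m = (-1)^m\, 2^{2m+1}(2^{2m+2} - 1)\, B_{2m+2}/(m+1)$ into $R(x)$. The equality $Q_\mathcal{A}(x,0,-\tfrac{1}{2}) = Q_\mathcal{A}(x,0,\tfrac{1}{2})$ reduces to the parity observation that every walk of model~$\mathcal{A}$ from $(0,0)$ to a point $(i,0)$ has even length: writing $A$, $B$, $C$ for the respective numbers of $\textsf{NW}$, $\textsf{NE}$, $\textsf{SE}$ steps, the endpoint condition $j = 0$ forces $C = A + B$, whence the total length $A + B + C = 2(A + B)$ is even and $Q_\mathcal{A}(x, 0, t)$ is an even function of $t$. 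The main technical hurdle, I anticipate, will be the coefficient-extraction step in the verification of $R$: the even-$k$ family of identities requires the ``diagonal'' contribution $v^{-2n-2}$ of $H_R(v)$ to combine with the binomial expansion of $(v-1)^{-2n-2}$ in just the right way so as to cancel the odd-$z$ Taylor coefficient of $-2\tanh(z/2)$, and pinning down this cancellation is the arithmetic heart of the argument.
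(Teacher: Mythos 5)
Your proposal is correct and follows essentially the same route as the paper: derive the finite-difference equation for $Q_\cA(x,0,\tfrac12)$ from the kernel at $t=\tfrac12$ together with the diagonal symmetry $Q(x,0)=Q(0,x)$, observe that this equation has a unique solution in $\C\llbracket x\rrbracket$, and check that the tangent-number series satisfies it via the exponential generating function of $\tan$. The only divergence is in how the last step is closed: the paper simply cites \cite[Eq.~(2.7)]{bostan-divizio-raschel-BELL} for the difference equation satisfied by the ordinary generating function of the tangent numbers, whereas you sketch a derivation from the identity $2\tanh(z/2)\,e^z = 2e^z-2-2\tanh(z/2)$ and flag the coefficient extraction as the remaining hurdle; that hurdle disappears once you note that multiplying an EGF by $e^z$ corresponds, at the level of OGFs, to the binomial-transform substitution $G(x)\mapsto \frac{1}{1-x}G\bigl(\frac{x}{1-x}\bigr)$, which converts your $\tanh$ identity directly into $R(x)+\frac{1}{(1+ix)^2}R\bigl(\frac{x}{1+ix}\bigr)=\frac{2}{1+ix}$, equivalent to your equation by evenness of $R$. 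Your parity argument for $t=-\tfrac12$ is correct and covers a point the paper's proof leaves implicit.
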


We derive a similar result for the models $\cB$ and $\mathcal{D}$, in Section~\ref{sec:case_1/2}.
As a direct consequence of Theorem~\ref{thm:explicit_A}, and using known results about Bernoulli numbers (e.g., recalled in \cite{bostan-divizio-raschel-BELL}), the power series 
$Q_\cA(x,0, \tfrac{1}{2})$ and
$Q_\cB(x,0, \tfrac{1}{2})$ are proved to be strongly differentially transcendental in a direct manner. This provides an alternative proof of~\cref{thm:main-Galois-intro} for models $\cA$, $\cB$ and $\cD$. Moreover, we prove that exponential versions of the power series $Q_\cA(x,0, \tfrac{1}{2})$ and $Q_\cD(x,0, \tfrac{1}{2})$ are D-algebraic; this can be viewed as yet another instance of the Pak-Yeliussizov Open Problem 2.4 in~\cite{Pak18}.

It would be interesting to find formulas of this kind for all the five singular models and actually establish whether 
they come from the confluence of some kind of $q$-Bernoulli numbers, appearing for $t = q/(1+q^2)$ with $q \neq 1$.
Notice that the nature of $Q(1,1,t)$ still remains out of reach with these techniques.

\begin{ack}
Our warm thanks go to Mireille Bousquet-Mélou, for her interest in this project and for her insights shared with us at various stages of its preparation.
We are grateful to the participants of the working group ``Transcendence and Combinatorics'' for interesting discussions over the years, which stimulated this work. 
\end{ack}


\section{On the convergence of  \texorpdfstring{$\boldsymbol{Q(x,y,t)}$}{Q(x,y,t)} with respect to \texorpdfstring{$\boldsymbol{t}$}{t}}
\label{sec:well_defined}

As mentioned in the introduction, one of the original results of our paper concerns the function $Q(x,y,\tfrac{1}{2})$, 
that is the evaluation at $t=\tfrac{1}{2}$ of the generating function $Q(x,y,t)$ introduced in \eqref{eq:def_generating_function}.
While the trivial estimate $\#_{\cS}\{(0,0)\stackrel{n}{\to} (i,j)\}\leq (\#{\cS})^n$ implies that $Q(x,y,t)$ is well defined and analytic for $\vert x\vert \leq 1$, $\vert y\vert \leq 1$ and $\vert t\vert<\frac{1}{\#\cS}$,
it is \textit{a priori} unclear that $Q(x,y,t)$ can be continued analytically for larger values of $\vert t\vert$.
This is the object of this section, in which we will prove the following result:

\begin{theorem}
\label{thm:cont_1/2}
For any model in Table~\ref{tab:list}, the series $Q(x,y,t)$ converges for $|x|,|y|<1$ and $|t|<1/2$.
Moreover,
\begin{equation*}
    Q(x,y,\tfrac{1}{2}) = \sum_{i,j,n\geq0}\frac{\#_{\cS}\{(0,0)\stackrel{n}{\to} (i,j)\}}{2^n} x^iy^j\in\mathbb Q\llbracket x,y\rrbracket
\end{equation*}
is well defined, meaning that all its coefficients are finite and define rational numbers.
\end{theorem}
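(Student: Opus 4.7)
My plan is to combinatorially analyze walks in each singular model using the antidiagonal decomposition alluded to in Section~\ref{sec:well_defined}, then to handle the resulting sums via basic linear algebra on the associated transfer matrices. The key structural observation is that in every singular step set of Table~\ref{tab:list}, each step is either antidiagonal (\textsf{NW} or \textsf{SE}, preserving the level $i+j$) or a jump in $\{\textsf{N},\textsf{E},\textsf{NE}\}$ increasing that level by $1$ or $2$. Hence along any walk the level is nondecreasing, and every walk from $(0,0)$ to $(i,j)$ splits uniquely into horizontal excursions on successive antidiagonals $S_{k_0},\ldots,S_{k_p}=S_{i+j}$, glued by finitely many jump steps.

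On each $S_k$, the horizontal steps \textsf{NW} and \textsf{SE} act as the two unit steps of the simple walk on $\{0,1,\ldots,k\}$, so the number of horizontal walks of length $m$ from $a$ to $b$ is exactly $(L_k^m)_{a,b}$, where $L_k$ denotes the adjacency matrix of the path graph on $k+1$ vertices. Its spectrum is the classical set $\{2\cos(j\pi/(k+2)) : 1\leq j\leq k+1\}\subset(-2,2)$, so the resolvent $R_k(t):=(I-tL_k)^{-1}$ is a matrix of rational functions of $t$ whose only poles, being the reciprocals of these eigenvalues, lie strictly outside the closed disk $\{|t|\leq 1/2\}$. In particular $R_k(\tfrac12)\in\Q^{(k+1)\times(k+1)}$ is explicitly computable.

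Gluing horizontal-excursion resolvents with jump-step shifts (finite-dimensional embeddings $S_k\hookrightarrow S_{k+1}$ or $S_{k+2}$) yields, for each fixed $(i,j)$, a \emph{finite} closed-form sum
\[
 \sum_{n\geq 0}\#_{\cS}\{(0,0)\stackrel{n}{\to}(i,j)\}\,t^n
 \;=\; \sum_{\text{decompositions}} t^{J}\prod_{\ell}\bigl(R_{k_\ell}(t)\bigr)_{a_\ell,b_\ell},
\]
running over the possible sequences of visited levels $k_\ell\leq i+j$, jump types and entry/exit positions, with $J$ the total number of jumps. Each summand is rational in $t$ and regular on $\{|t|\leq 1/2\}$, so the series evaluates at $t=\tfrac12$ to a rational number; this proves the ``moreover'' part of the theorem.

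For the joint-convergence statement on $\{|x|,|y|<1\}\times\{|t|<1/2\}$, writing $s_n(x,y):=\sum_{i,j}\#_\cS\{(0,0)\stackrel{n}{\to}(i,j)\}\,x^iy^j$, it suffices to establish $\limsup_n s_n(x,y)^{1/n}\leq 2$ for $|x|,|y|<1$. I would decompose $s_n$ by the final level $k=i+j$, bound each block by the horizontal growth rate $\lambda_k^n$ with $\lambda_k:=2\cos(\pi/(k+2))<2$, and weight by the factor $r^k$ (with $r:=\max(|x|,|y|)<1$) supplied by the endpoint. Because $\lambda_k\to 2$ only as $k\to\infty$, a saddle-point balance between the penalty $r^{2k}$ and the horizontal growth $\lambda_k^n$ should deliver $s_n(x,y) = O\bigl(2^n\exp(-c\,n^{1/3})\bigr)$ for some $c=c(r)>0$, yielding convergence for $|t|<1/2$. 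I expect this last estimate to be the technical heart of the proof: the purely naive bound $s_n(x,y)\leq\chi_\cS(|x|,|y|)^n$ coming from ignoring the quarter-plane restriction only guarantees convergence in the strictly smaller region $|t|\chi_\cS(|x|,|y|)<1$, so one must genuinely exploit the strict inequality $\lambda_k<2$ afforded by the quarter-plane restriction to reach the full range $|t|<1/2$.
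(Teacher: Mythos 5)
Your treatment of the ``Moreover'' part is correct and is essentially the paper's own argument: the decomposition of a singular walk into level-preserving excursions on the antidiagonals $S_k$ glued by level-raising jump steps, the identification of the excursion generating functions with entries of the resolvent $(I-tL_k)^{-1}$ of the path-graph adjacency matrix (this is exactly the matrix $F_{k+1}$ of \eqref{eq:def_F_n}), the Chebyshev eigenvalue computation placing all poles outside $\vert t\vert\leq\frac12$, and the resulting finite sum of rational functions with rational values at $t=\frac12$ reproduce Lemma~\ref{lem:rationality_1D_RW}, Propositions~\ref{prop:expression_matrix_A}, \ref{prop:expression_matrix_B_D}, \ref{prop:expression_matrix_C_E} and Proposition~\ref{prop:rational_coeff}.

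The convergence part, however, contains a genuine gap, and the estimate you single out as the technical heart is in fact false. Suppose $s_n(x,y)=O\bigl(2^n e^{-c n^{1/3}}\bigr)$ for some fixed $0<x,y<1$. Then $\sum_n s_n(x,y)2^{-n}<\infty$, and since all terms are nonnegative, Tonelli's theorem would give
\[
\sum_{i,j\geq 0}x^iy^j\sum_{n\geq 0}\frac{\#_{\cS}\{(0,0)\stackrel{n}{\to}(i,j)\}}{2^n}<\infty,
\]
i.e.\ $Q(x,y,\tfrac12)$ would converge as a numerical series. But for model $\cA$ the coefficient of $x^{2n}$ in $Q_{\cA}(x,0,\tfrac12)$ equals $T_n/4^n\sim 8(2n+1)!/\pi^{2n+2}$ by Theorem~\ref{thm:explicit_A}, which grows superexponentially, so $Q_{\cA}(x,0,\tfrac12)$ diverges for every $x\neq 0$: at $t=\tfrac12$ the series is only a \emph{formal} power series in $x,y$. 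The flaw in your saddle-point heuristic is that the number of $n$-step walks ending on level $k$ is not $O(\lambda_k^n)$ with a prefactor subexponential in $k$: the relevant constant (essentially the residue at the dominant pole, a product of Green-function entries over all levels up to $k$) grows factorially in $k$, and in the regime $k\asymp n^{1/3}$ it overwhelms the gain $e^{-n\pi^2/(2k^2)}$. What the first assertion of the theorem requires is only $\limsup_n s_n(x,y)^{1/n}\leq 2$, which is strictly weaker than your bound, but your proposed route does not establish it either; note that the paper's Section~\ref{sec:well_defined} itself only carries out the coefficientwise statement in detail, so this part would need a genuinely different argument.
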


As explained by Mishna and Rechnitzer \cite{MiRe09}, singular walks naturally have a recursive construction, as directed steps $(1,0)$, $(1,1)$, $(0,1)$ followed by fluctuations with antidiagonal jumps $(-1,1)$ and $(1,-1)$; see Figure~\ref{fig:recursive_cons}. Reformulating some results of \cite{MiRe09}, we obtain in this section a matrix-product expression for the generating function $Q(x,y,t)$, which immediately leads to the following partial result (proved at the very end of this section, as a consequence of Propositions~\ref{prop:expression_matrix_A}, \ref{prop:expression_matrix_B_D} and \ref{prop:expression_matrix_C_E}):

\begin{proposition}
\label{prop:cont_1/2}
For any model in Table~\ref{tab:list}, the power series
$Q(x,y,\tfrac{1}{2})$
is well defined in $\R\llbracket x,y\rrbracket$, meaning that all its coefficients are finite.
\end{proposition}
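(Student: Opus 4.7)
The plan is to exploit the directed nature of singular walks via a decomposition into antidiagonal excursions. Following the recursive construction of \cite{MiRe09} (see Figure~\ref{fig:recursive_cons}), every path in a singular model decomposes uniquely as an alternating sequence of walks confined to a single antidiagonal $S_k$ (using only the steps $\textsf{NW}=(-1,1)$ and $\textsf{SE}=(1,-1)$, which preserve $i+j$) and ``directed'' moves drawn from $\{\textsf{N},\textsf{E},\textsf{NE}\}\cap\cS$, which strictly increase $i+j$. For a path ending at $(i,j)$, the sequence of visited antidiagonals is strictly increasing from $S_0$ to $S_{i+j}$, so only finitely many such sequences (and intermediate entry/exit positions on each $S_{k_\ell}$) can contribute.

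On each $S_k=\{(k,0),(k-1,1),\ldots,(0,k)\}$, the antidiagonal walk is governed by the $(k+1)\times(k+1)$ tridiagonal matrix $M_k$ with $1$'s on the super- and sub-diagonals and $0$'s elsewhere. Its weighted enumeration is the resolvent $R_k(t)=(I-tM_k)^{-1}$, whose $(a,b)$-entry counts NW/SE walks on $S_k$ from position $a$ to position $b$ with weight $t$ per step. A standard computation gives the eigenvalues of $M_k$ as $2\cos\bigl(\pi\ell/(k+2)\bigr)$ for $\ell=1,\ldots,k+1$, so the spectral radius is $\rho(M_k)=2\cos\bigl(\pi/(k+2)\bigr)<2$. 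Consequently $R_k(t)$ is analytic on a disk strictly containing $|t|\leq\tfrac{1}{2}$, and $R_k(\tfrac{1}{2})$ has finite (in fact rational) entries.

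Combining the two observations, the coefficient of $x^iy^j$ in $Q(x,y,t)$ rewrites as a \emph{finite} sum, indexed by the admissible sequences $0=k_0<k_1<\cdots<k_m=i+j$ with $k_{\ell+1}-k_\ell\in\{1,2\}$ together with the intermediate positions on each $S_{k_\ell}$, of finite products of entries of the resolvents $R_{k_\ell}(t)$ and factors of $t$ coming from the directed steps. Specializing to $t=\tfrac{1}{2}$ keeps every such product finite, so every coefficient of $Q(x,y,\tfrac{1}{2})$ is a finite (rational) real number, which is exactly the claim of the proposition.

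The bulk of the work is organizational rather than conceptual: one must carry out the bookkeeping of directed transitions separately for each of the five step sets in Table~\ref{tab:list}, since the asymmetric models $\cD$ and $\cE$ require a slightly different accounting from the symmetric $\cA$, $\cB$, $\cC$ (in particular, for $\cA$ one has only NE as a level-raising step, while $\cB$, $\cC$ admit two kinds of level-$+1$ transitions). This case analysis is exactly what Propositions~\ref{prop:expression_matrix_A}, \ref{prop:expression_matrix_B_D} and \ref{prop:expression_matrix_C_E} provide; Proposition~\ref{prop:cont_1/2} then follows by combining them with the spectral estimate $\rho(M_k)<2$ above.
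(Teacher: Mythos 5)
Your proposal is correct and follows essentially the same route as the paper: decompose each path into antidiagonal excursions interleaved with level-raising steps, express the per-endpoint generating function as a finite product of resolvent entries $(I-tM_k)^{-1}$ (the paper's $F_{k+1}$), and conclude from the bound $\tfrac12 < \bigl(2\cos\tfrac{\pi}{k+2}\bigr)^{-1}$ that everything is finite at $t=\tfrac12$. Your spectral-radius computation for the path-graph adjacency matrix is just a rephrasing of the paper's Chebyshev-polynomial argument (Proposition~\ref{prop:coeff_F_n}), since the eigenvalues $2\cos\bigl(\pi\ell/(k+2)\bigr)$ are exactly twice the zeros of $U_{k+1}$.
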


We will complete the proof of Theorem~\ref{thm:cont_1/2}
by showing that the coefficients of $Q(x,y,\tfrac{1}{2})$ are rational (see Proposition~\ref{prop:rational_coeff} below).

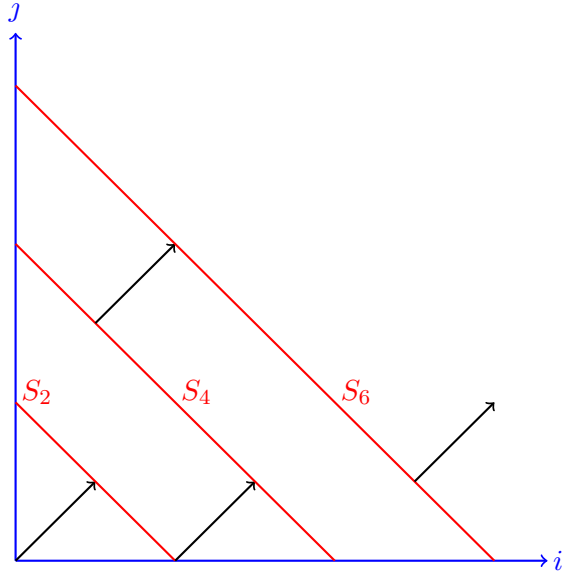
\begin{figure}
    \centering
\begin{tikzpicture}[scale=0.7]
    \draw[->,blue,thick] (0,0) -- (0,10);
    \draw[->,blue,thick] (0,0) -- (10,0);
    \draw[->,thick] (0,0) -- (1.5,1.5);
    \draw[-,red,thick] (0,3) -- (3,0);
    \draw[->,thick] (3,0) -- (4.5,1.5);
    \draw[-,red,thick] (6,0) -- (0,6);
    \draw[->,thick] (1.5,4.5) -- (3,6);
    \draw[-,red,thick] (9,0) -- (0,9);
    \draw[->,thick] (7.5,1.5) -- (9,3);
    \node at (0.4,3.2) {\textcolor{red}{$S_2$}};
    \node at (3.4,3.2) {\textcolor{red}{$S_4$}};
    \node at (6.4,3.2) {\textcolor{red}{$S_6$}};
    \node at (10.2,0) {\textcolor{blue}{$i$}};
    \node at (0,10.45) {\textcolor{blue}{$j$}};
   \end{tikzpicture}
   \begin{quote}
    \caption{Recursive construction of singular walks. In this example on model $\cA$, a walk can be decomposed as a sequence of jumps $(1,1)$ (black arrows)\ and antidiagonal jumps $(-1,1)$ and $(1,-1)$ (red). The segments $S_k$ are defined in \eqref{eq:segments}. See Figure~\ref{fig:example_path} for an example of a longer path.}
   \end{quote}
    \label{fig:recursive_cons}
\end{figure}

\subsection{Enumeration of walks on segments}

For any $k\geq 1$, define $S_k$ as the antidiagonal segment
\begin{equation}
\label{eq:segments}
    S_k = \{(i,j)\in \mathbb Z^2: i\geq 0,\, j\geq 0,\, i+j=k\}=\{(k,0),(k-1,1),\ldots,(0,k)\},
\end{equation}
see Figure~\ref{fig:recursive_cons}. For $P,Q\in S_k$ and $n\in\mathbb N$, define $w_k(P,Q,n)$ as the number of paths in $S_k$ starting at~$P$, ending at~$Q$ and having length~$n$. We further introduce the generating function
\begin{equation}
\label{eq:GF_segments}
    W_k(P,Q,t):=\sum_{n\geq 0} w_k(P,Q,n) t^n.
\end{equation}
For $n\geq 2$, we consider the matrix
\begin{equation}
\label{eq:def_F_n}
    F_{n}=\begin{pmatrix}
1 & -t & 0 &\cdots &\cdots &  0 \\
-t & 1 & -t & 0 &\cdots &  0 \\
0&-t & 1 & -t & 0  & 0 \\
\vdots  & \ddots  & \ddots& \ddots & \ddots  & \vdots\\
\vdots  &  \cdots & 0 & -t& 1 & -t  \\
0 & \cdots  & \cdots& 0 & -t & 1
\end{pmatrix}^{-1}\in\mathcal M_{n,n}(\mathbb R(t)),
\end{equation}
which in Lemma~\ref{lem:rationality_1D_RW} below allows us to completely characterize the generating functions $W_k(P,Q,t)$ in \eqref{eq:GF_segments}.
Recall that the $n$-th Chebychev polynomial of the second kind $U_n(z) \in \mathbb{Q}[z]$ is defined via the generating function 
$1/(1-2zt+t^2) = \sum_{n=0}^\infty U_n(z)t^n$, or alternatively 
via the equality 
$U_n(\cos \theta) \sin \theta = \sin\big((n + 1)\theta\big)$.
Then, we have the following well-known statement:

\begin{proposition}[{\cite[Thm~3.2]{ka89}}]
\label{prop:coeff_F_n}
For $1\leq i,j\leq n$ and $\vert t\vert< \frac{1}{2\cos (\frac{\pi}{n+1})}$, let
\begin{equation*}
  f_{i,j} = \frac{(-1)^{i+j+1}}{t} \frac{U_{n-j}(-\frac{1}{2t}) U_{i-1}(-\frac{1}{2t})}{U_{n}(-\frac{1}{2t})}
  = \frac{1}{t} \frac{U_{n-j}(\frac{1}{2t}) U_{i-1}(\frac{1}{2t})}{U_{n}(\frac{1}{2t})}.
\end{equation*}
Then the coefficients of $F_n$ are the $f_{i,j}$ for $i\leq j$ and $f_{j,i}$ for $i>j$.
\end{proposition}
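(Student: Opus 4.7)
The plan is to combine two standard ingredients: the Cramer-rule expression for the entries of the inverse of a tridiagonal matrix in terms of its leading and trailing principal minors, and an identification of these minors with Chebyshev polynomials of the second kind through a three-term recurrence. Because $F_n^{-1}$ (the matrix with $1$ on the diagonal and $-t$ on the sub- and super-diagonals) is symmetric, so is $F_n$, and it is enough to establish the formula for $i\leq j$.

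First I would let $D_k$ denote the determinant of the top-left $k\times k$ submatrix of $F_n^{-1}$, with $D_0=1$. Expanding along the first row gives immediately
\begin{equation*}
D_k = D_{k-1} - t^2\, D_{k-2}, \qquad D_0 = 1,\ D_1 = 1.
\end{equation*}
Since the Chebyshev polynomials of the second kind satisfy $U_k(z)=2z\,U_{k-1}(z)-U_{k-2}(z)$ with $U_0=1$ and $U_1=2z$, the substitution $z=\tfrac{1}{2t}$ together with an extra factor $t^k$ converts this into the recurrence for $D_k$; a one-line induction then yields $D_k = t^k\,U_k(\tfrac{1}{2t})$. By symmetry of $F_n^{-1}$, the determinant of the bottom-right $(n-k)\times(n-k)$ submatrix equals $D_{n-k}$.

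Next I would invoke the classical inversion formula for a tridiagonal matrix: applying Cramer's rule to the system $F_n^{-1}\,x=e_j$ one finds that each determinant appearing in the numerator factors as a product of a leading and a trailing principal minor of $F_n^{-1}$, times $(-t)^{j-i}$ coming from the off-diagonal cofactors. This gives, for $i\leq j$,
\begin{equation*}
(F_n)_{i,j} \;=\; (-1)^{i+j}\,(-t)^{j-i}\,\frac{D_{i-1}\,D_{n-j}}{D_n}.
\end{equation*}
Substituting $D_k = t^k\,U_k(\tfrac{1}{2t})$ and telescoping the powers of $t$ yields
\begin{equation*}
f_{i,j} \;=\; \frac{1}{t}\,\frac{U_{i-1}\bigl(\tfrac{1}{2t}\bigr)\,U_{n-j}\bigl(\tfrac{1}{2t}\bigr)}{U_n\bigl(\tfrac{1}{2t}\bigr)},
\end{equation*}
and the equivalent form with arguments $-\tfrac{1}{2t}$ follows from the parity identity $U_k(-z)=(-1)^k U_k(z)$.

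The assumption $|t|<\tfrac{1}{2\cos(\pi/(n+1))}$ in the statement is exactly the invertibility condition: the zeros of $U_n$ are the numbers $\cos(k\pi/(n+1))$ for $k=1,\dots,n$, so the smallest $|t|$ at which $U_n(\tfrac{1}{2t})$ vanishes equals $1/(2\cos(\pi/(n+1)))$. The only non-routine point in the argument is the sign-and-exponent bookkeeping when substituting the Chebyshev expression for $D_k$ into the tridiagonal inversion formula; there is no conceptual obstacle, and both ingredients are classical enough that one could, alternatively, simply invoke \cite{ka89}.
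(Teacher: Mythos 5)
Your proof is correct. Note, however, that the paper does not prove this proposition at all: it is stated as a known result and simply attributed to Kamps \cite[Thm~3.2]{ka89}, so there is no internal argument to compare against. What you supply is the standard self-contained derivation: the leading principal minors $D_k$ of $F_n^{-1}$ satisfy $D_k=D_{k-1}-t^2D_{k-2}$ with $D_0=D_1=1$, hence $D_k=t^kU_k\bigl(\tfrac{1}{2t}\bigr)$, and the tridiagonal inversion formula $(F_n)_{i,j}=(-1)^{i+j}(-t)^{j-i}D_{i-1}D_{n-j}/D_n$ for $i\leq j$ then collapses to $\tfrac{1}{t}U_{i-1}\bigl(\tfrac{1}{2t}\bigr)U_{n-j}\bigl(\tfrac{1}{2t}\bigr)/U_n\bigl(\tfrac{1}{2t}\bigr)$, since $(-1)^{i+j}(-1)^{j-i}=1$ and the powers of $t$ telescope to $t^{-1}$. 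The form with argument $-\tfrac{1}{2t}$ follows from $U_k(-z)=(-1)^kU_k(z)$, and the hypothesis on $|t|$ is exactly what keeps $U_n\bigl(\tfrac{1}{2t}\bigr)$ away from its (all real) zeros $\cos\bigl(\tfrac{k\pi}{n+1}\bigr)$. All the bookkeeping checks out; your argument is a legitimate replacement for the citation.
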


For an upcoming use, let us make explicit the matrix $F_n$ at $t=\frac{1}{2}$. 
Since
$\sum_{n=0}^\infty U_n(-1)t^n = 1/(1+t)^2$, 
we have that $U_n(-1)=(-1)^n(n+1)$, so $f_{i,j}=2\frac{i(n-j+1)}{n+1}$ and thus
\begin{equation}
\label{eq:def_F_n_inverse_at_1/2}
    F_n\vert_{t=\frac 12}=\frac{2}{n+1}\begin{pmatrix}
n & n-1 & n-2 &\cdots &2 &  1 \\
n-1 & 2(n-1) & 2(n-2) & \cdots &4 &  2 \\
n-2 & 2(n-2) & 3(n-2) &\cdots & 6  & 3 \\
\vdots  & \vdots  & \vdots& \ddots & \vdots  & \vdots\\
2  &  4 & 6 & \cdots& 2(n-1) & n-1  \\
1 & 2  & 3& \cdots & n-1 & n
\end{pmatrix}.
\end{equation}

We now state Lemma~\ref{lem:rationality_1D_RW}.
Although it is not new, we prove it briefly, as this gives us the opportunity to keep our paper self-contained and to introduce some useful notations. Lemma~\ref{lem:rationality_1D_RW} immediately follows from the transfer-matrix method, see e.g.\ \cite[Sec.~4.7]{St-12}, in particular Theorems~4.7.1 and 4.7.2. 

\begin{lemma}
\label{lem:rationality_1D_RW}
Let $k\geq 1$. For $0\leq i\leq k$, denote by $P_i = (k-i,i)$ the points of $S_k$, see \eqref{eq:segments}. The following equality between matrices of generating functions holds:
\begin{equation}
\label{eq:identities_matrix_inverse}
    \bigl(W_k(P_i,P_j,t)\bigr)_{0\leq i,j\leq k}=F_{k+1}.
\end{equation}
Accordingly, for all $0\leq i,j\leq k$, the series $W_k(P_i,P_j,t)$ is rational, with first singularity (a pole of order one)\ at $t=\pm \bigl(2\cos\bigl(\frac{\pi}{2+k}\bigr)\bigr)^{-1}$, with $\bigl(2\cos\bigl(\frac{\pi}{2+k}\bigr)\bigr)^{-1}>\frac{1}{2}$. In the neighborhood of $\bigl(2\cos\bigl(\frac{\pi}{2+k}\bigr)\bigr)^{-1}$ we have
\begin{equation}
    \label{eq:series_expansion_Wk}
    W_k(P_i,P_j,t) = \frac{c_{i,j,k}}{\bigl(2\cos\bigl(\frac{\pi}{k+2})\bigr)^{-1} -t}+O(1),
\end{equation}
with $c_{i,j,k}>0$. The series $W_{k}(P_i,P_j,t)$ being even, an expansion similar to \eqref{eq:series_expansion_Wk} holds in the neighborhood of $-\bigl(2\cos\bigl(\frac{\pi}{k+2})\bigr)^{-1}$.
\end{lemma}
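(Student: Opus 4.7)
The plan is to recognize the walks on $S_k$ as a one-dimensional nearest-neighbor problem on $\{0,1,\dots,k\}$ and to invoke the transfer-matrix formalism. I would identify each point $P_i = (k-i, i)$ with its height index $i$, so that the antidiagonal jumps $(-1,1)$ and $(1,-1)$ that stay in $S_k$ become the elementary steps $+1$ and $-1$ on $\{0,1,\dots,k\}$. Let $M \in \mathcal M_{k+1,k+1}(\Z)$ denote the tridiagonal symmetric matrix with zero diagonal and ones on the super- and subdiagonals. Then $(M^n)_{i,j}$ equals $w_k(P_i, P_j, n)$, and summing over $n$ gives
\begin{equation*}
    \bigl(W_k(P_i,P_j,t)\bigr)_{0\leq i,j\leq k} \,=\, \sum_{n\geq 0} M^n t^n \,=\, (I_{k+1} - tM)^{-1}.
\end{equation*}
Comparing with \eqref{eq:def_F_n}, the matrix $I_{k+1} - tM$ is precisely the matrix whose inverse defines $F_{k+1}$, which immediately yields \eqref{eq:identities_matrix_inverse}.

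From this identity rationality is automatic, and the singularities of every entry are among the reciprocals of the non-zero eigenvalues of $M$. I would then invoke the classical diagonalization of this tridiagonal matrix: its eigenvalues are $2\cos\bigl(\tfrac{\ell\pi}{k+2}\bigr)$ for $\ell = 1,\dots,k+1$, with orthonormal eigenvectors $v^{(\ell)}_{i} = \sqrt{\tfrac{2}{k+2}}\sin\bigl(\tfrac{(i+1)\ell\pi}{k+2}\bigr)$. The resulting spectral decomposition
\begin{equation*}
    W_k(P_i,P_j,t) \,=\, \sum_{\ell=1}^{k+1} \frac{v^{(\ell)}_i\, v^{(\ell)}_j}{1 - 2t \cos\bigl(\tfrac{\ell\pi}{k+2}\bigr)}
\end{equation*}
exhibits simple poles exactly at $t = \pm\bigl(2\cos(\tfrac{\ell\pi}{k+2})\bigr)^{-1}$; the one of smallest modulus corresponds to $\ell = 1$ and lies at $\pm\bigl(2\cos(\tfrac{\pi}{k+2})\bigr)^{-1}$, which is strictly larger than $\tfrac12$ because $2\cos(\tfrac{\pi}{k+2}) < 2$ for every $k \geq 1$.

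For the positivity statement in \eqref{eq:series_expansion_Wk}, I would invoke Perron--Frobenius: since $M$ is non-negative, symmetric and irreducible, its spectral radius $\lambda_1 = 2\cos(\tfrac{\pi}{k+2})$ is a simple eigenvalue with a strictly positive eigenvector, namely $v^{(1)}$. Extracting the $\ell = 1$ term of the spectral decomposition shows that the residue of $W_k(P_i,P_j,t)$ at $t = 1/\lambda_1$ equals $v^{(1)}_i v^{(1)}_j / \lambda_1 > 0$, which is the claimed $c_{i,j,k}$. Finally, the even/odd symmetry reduces to a bipartiteness argument: on the path graph $\{0,\dots,k\}$ one has $w_k(P_i,P_j,n) = 0$ unless $n \equiv i+j \pmod 2$, so $W_k(P_i,P_j,t)$ contains only monomials of one fixed parity in $t$; its set of singularities is therefore symmetric under $t \mapsto -t$, giving the analogous expansion at $-\bigl(2\cos(\tfrac{\pi}{k+2})\bigr)^{-1}$.

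I do not foresee a genuine obstacle, as the whole argument is a direct application of the transfer-matrix formalism combined with the well-known spectrum of the path-graph adjacency matrix. The only subtle point to verify is that $M$ automatically encodes the confinement of the walks to $S_k$, but this is transparent from its definition (no transitions leaving $\{0,\dots,k\}$), so the proof reduces to assembling the ingredients above.
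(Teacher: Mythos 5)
Your proof is correct, and it reaches the same two conclusions by a route that differs from the paper's in its second half. For the matrix identity \eqref{eq:identities_matrix_inverse} you and the paper do essentially the same thing: the paper writes the first-step decomposition of the walks as a linear system whose matrix is $F_{k+1}^{-1}$, while you write $\sum_n M^n t^n=(I_{k+1}-tM)^{-1}$ for the path-graph adjacency matrix $M$; these are the same transfer-matrix argument in two guises. The divergence is in the singularity analysis. The paper works entrywise through Chebyshev polynomials: it computes $\det(F_n^{-1})=(-t)^nU_n(-\tfrac{1}{2t})$, locates the first zero of $U_n$ at $\cos(\tfrac{\pi}{n+1})$, and then expands the explicit entry formula of Proposition~\ref{prop:coeff_F_n} around that zero, reading off positivity of $c_{i,j,k}$ from the interlacing of Chebyshev roots. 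You instead diagonalize $M$ explicitly (eigenvalues $2\cos(\tfrac{\ell\pi}{k+2})$, sine eigenvectors) and extract the $\ell=1$ term of the spectral decomposition, getting $c_{i,j,k}=v^{(1)}_iv^{(1)}_j/\lambda_1$ with positivity immediate from the strict positivity of the Perron eigenvector $\sin\bigl(\tfrac{(i+1)\pi}{k+2}\bigr)>0$. The two routes encode the same classical facts (the Chebyshev polynomials $U_n$ are the characteristic polynomials of $M$ up to rescaling), but yours gives a cleaner closed form for the residue and makes non-vanishing of the leading pole transparent, whereas the paper's stays closer to the explicit entry formula it needs anyway for the evaluation \eqref{eq:def_F_n_inverse_at_1/2} at $t=\tfrac12$. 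Your bipartiteness argument for the symmetry $t\mapsto -t$ is also slightly more careful than the paper's phrasing: $W_k(P_i,P_j,t)$ is even or odd according to the parity of $i+j$, which is exactly what your congruence $n\equiv i+j \pmod 2$ captures.
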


\begin{proof}
We first look at the row vector of generating functions
\begin{equation*}
    \bigl(W_k(P_0,P_0,t),\ldots,W_k(P_k,P_0,t)\bigr)
\end{equation*}
with fixed ending point at $P_0$ and varying starting points in $S_k$. Using a first step decomposition of the walk, we easily obtain the identities
\begin{align*}
    W_k(P_0,P_0,t) &= 1 + t W_k(P_1,P_0,t),\\
    W_k(P_i,P_0,t) & = t W_k(P_{i-1},P_{0},t)+t W_k(P_{i+1},P_0,t), &1\leq i \leq k-1,\\
    W_k(P_k,P_0,t) & = tW_k(P_{k-1},P_0,t).
    \end{align*}
In terms of matrix product, this means that
\begin{equation*}
    \bigl(W_k(P_0,P_0,t),\ldots,W_k(P_k,P_0,t)\bigr) F_{k+1}^{-1} = (1,0,\ldots ,0),
\end{equation*}
the first vector of the canonical basis of $\mathbb R^{k+1}$. Similar computations show that
\begin{equation*}
    \bigl(W_k(P_0,P_i,t),\ldots,W_k(P_k,P_i,t)\bigr) F_{k+1}^{-1}
\end{equation*}
is the $(i+1)$-th vector of the canonical basis of $\mathbb R^{k+1}$, which proves Equation~\eqref{eq:identities_matrix_inverse}, since $F_{k+1}$ is symmetric.

We now prove the statement on the radius of convergence. For this we use the classical representation of the Chebychev polynomials as determinants of tridiagonal matrices, proving that
\begin{equation}
\label{eq:determinant_Cheb}
    \det (F_n^{-1}) = (-t)^n U_n\left(-\frac{1}{2t}\right).
\end{equation}
On the other hand, as $U_n(\cos\theta) = \frac{\sin((n+1)\theta)}{\sin \theta}$, the largest zero of $U_n(z)$ happens at $z = \cos(\frac{\pi}{n+1})$. As a consequence, the matrix $F_n$ is defined for $\vert t\vert< \frac{1}{2\cos (\frac{\pi}{n+1})}$, which corresponds to the statement after the change of variable $n=k+1$.

In order to prove \eqref{eq:series_expansion_Wk}, we perform a series expansion of $U_{n}$ around its first zero $\cos(\frac{\pi}{n+1})$, namely
\begin{equation*}
    U_n(z)=\frac{2(n + 1)\cos(\frac{\pi}{n + 1})}{\sin(\frac{\pi}{n + 1})^2}
    \bigl(z-\cos(\tfrac{\pi}{n + 1})\bigr)+\cdots
\end{equation*}
and use Proposition~\ref{prop:coeff_F_n}. In this way, we obtain \eqref{eq:series_expansion_Wk}, with the following expression for the constant
\begin{equation*}
    c_{i,j,k}=\frac{\sin(\frac{\pi}{k+2})^2U_{k+1-j}\bigl(\cos(\frac{\pi}{k+2})\bigr)U_{i-1}\bigl(\cos(\frac{\pi}{k+2})\bigr)}{(k + 2)\cos(\frac{\pi}{k+2})},
\end{equation*}
on which we can read the positivity (recall that $\cos(\frac{\pi}{k+2})$ is the largest root of $U_{k+1}$ and that the roots of the Chebychev polynomials interlace, in such a way that Chebychev polynomials with index lower than $k+1$ are positive on the segment $[\cos(\frac{\pi}{k+2}),1]$).
\end{proof}

We apply these preliminary results to the generating functions associated to our five models, starting with model $\cA$.

\subsection{Preliminary results for
singular walks with step set \texorpdfstring{$\boldsymbol{\cA}$}{A}}

In this subsection, we focus on the model $\cA$, the first model in Table~\ref{tab:list}.
Notice that for model $\cA$ a walk can jump from one segment $S_k$ to another
only thanks to the step $(1,1)$: a walk starting in $(0,0)$ can only jump to $S_2$ and then either stay on $S_2$ or jump to $S_4$, and so on.
This immediately implies that
there are no walks ending on points of the segment $S_k$, when $k$ is odd.
More formally, let us fix $Q_k\in S_k=\{(k-i,i):i=0,\dots,k\}$ and consider the series
    \begin{equation}
    \label{eq:def_GF_A}
        \sum_{n\geq0} \#_{\cA}\{(0,0)\stackrel{n}{\to} Q_k\}t^n.
    \end{equation}
Then the series \eqref{eq:def_GF_A} above is identically zero for any odd value of the integer $k\geq 1$.
This proves the first (trivial)\ part of the following proposition, which is the key-point of this subsection.
To state its second part, we need to define the following sequence of matrices $(A_n)_{n\geq 3}$:
\begin{equation}
\label{eq:def_A_n}
    A_n=\begin{pmatrix}
0 & 1 & 0 &\cdots &\cdots & 0 \\
0 & 0 & 1 & 0 &\cdots & 0 \\
\vdots  & \vdots  & \ddots & \ddots& \ddots & \vdots  \\
0 & \cdots & \cdots& 0 & 1 & 0
\end{pmatrix}\in\mathcal M_{n-2,n}(\mathbb R).
\end{equation}

\begin{proposition}
\label{prop:expression_matrix_A}
If $k$ is odd, the series \eqref{eq:def_GF_A} is identically zero. Moreover, for any $k\geq 1$, one has
        \begin{equation*}
           \left(\sum_{n\geq0} \#_{\cA}\{(0,0)\stackrel{n}{\to} (2k-i,i)\}t^n\right)_{0\leq i\leq 2k}=t^kA_3 F_3 \cdots A_{2k+1}F_{2k+1},
        \end{equation*}
    with $A_{2k+1}$ and $F_{2k+1}$ as in \eqref{eq:def_A_n} and \eqref{eq:def_F_n}, respectively.
\end{proposition}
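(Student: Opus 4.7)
The plan is to establish both parts by exploiting the particular structure of model $\cA$: every step is either antidiagonal or a single diagonal step NE.

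For the first claim, each step in $\cA=\{\textsf{NW},\textsf{NE},\textsf{SE}\}$ changes the coordinate sum $i+j$ by $0$ (for NW and SE) or $+2$ (for NE). Hence the parity of $i+j$ is preserved along any walk; since the origin has coordinate sum $0$, no point of $S_k$ with $k$ odd can be reached, and the series \eqref{eq:def_GF_A} vanishes identically for odd $k$.

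For the matrix identity, introduce the row vector
\begin{equation*}
    v_j := \Bigl(\sum_{n\geq 0}\#_\cA\{(0,0)\stackrel{n}{\to}(2j-i,i)\}\, t^n\Bigr)_{0\leq i\leq 2j}
\end{equation*}
and aim to prove $v_k=t^k A_3F_3\cdots A_{2k+1}F_{2k+1}$ by induction on $k$, with the base case $v_0=(1)$ (the only walk of length $0$ from $(0,0)$ to $(0,0)$). Any walk from $(0,0)$ to a point of $S_{2k}$ must contain exactly $k$ NE steps, because NE is the only step that increments the coordinate sum (from $0$ to $2k$). Cutting such a walk at every NE step yields a unique factorisation of the form
\begin{equation*}
(\text{walk on }S_0)\xrightarrow{\textsf{NE}}(\text{walk on }S_2)\xrightarrow{\textsf{NE}}\cdots\xrightarrow{\textsf{NE}}(\text{walk on }S_{2k}),
\end{equation*}
where each ``walk on $S_{2j}$'' is a (possibly empty) sequence of NW/SE steps remaining inside $S_{2j}$.

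I would then translate this factorisation into the one-step recurrence $v_{j+1}=t\cdot v_j\cdot A_{2j+3}\cdot F_{2j+3}$. The NE step sends the point $P_i^{(2j)}=(2j-i,i)\in S_{2j}$ to $P_{i+1}^{(2j+2)}=(2j-i+1,i+1)\in S_{2j+2}$, so as $i$ runs over $\{0,\ldots,2j\}$ the target's index runs over $\{1,\ldots,2j+1\}$. This is exactly the interior of $\{0,\ldots,2j+2\}$, since the two extremal points $(2j+2,0)$ and $(0,2j+2)$ of $S_{2j+2}$ cannot be reached by an NE step from the quarter plane. Right-multiplication by $A_{2j+3}\in\mathcal{M}_{2j+1,2j+3}(\R)$ implements precisely this ``shift-and-pad-with-zeros'' operation; the factor $t$ records the cost of the NE step; and right-multiplication by $F_{2j+3}$ accounts for the subsequent one-dimensional walk on $S_{2j+2}$, by \cref{lem:rationality_1D_RW} (which gives the within-segment generating matrix). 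Iterating this recurrence from $v_0=(1)$ yields the claimed closed form.

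No real obstacle is expected: the only point requiring a moment of care is the compatibility between the indexing of $S_{2j}$ and the shape of $A_{2j+3}$, which is a direct verification. The essential combinatorial input is the uniqueness of the decomposition induced by the $k$ forced NE steps, which is evident since NE is the sole level-raising move.
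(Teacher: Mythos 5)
Your proposal is correct and follows essentially the same route as the paper: the parity argument for odd $k$, the factorisation of a walk at its $k$ forced NE steps into within-segment fluctuations, and the identification of $A_{2j+3}$ as the shift-and-pad jump matrix and $F_{2j+3}$ as the segment generating matrix from Lemma~\ref{lem:rationality_1D_RW}. The only cosmetic difference is that you organise the decomposition as a one-step induction $v_{j+1}=t\,v_jA_{2j+3}F_{2j+3}$, whereas the paper writes the full sum over intermediate points (its Equation~\eqref{eq:decomp_GF}) and then reads off the matrix product.
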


\begin{remark}
\normalfont
The expression given in Proposition~\ref{prop:expression_matrix_A} is robust in the following sense: if the boundary of the quarter plane were replaced by any other boundary, as in Figure~\ref{fig:other_boundaries}, then a very similar matrix-product expression for the generating function would hold. This observation applies more generally for all singular step sets $\mathcal S$.
\end{remark}

\begin{figure}
    \centering
\begin{tikzpicture}[scale=0.5]
    \draw[-,blue,thick] (0,0) -- (2,0);
   \draw[-,blue,thick] (2,0) -- (3,1);
   \draw[-,blue,thick] (3,1) -- (5,1);
   \draw[-,blue,thick] (5,1) -- (6,2);
   \draw[->,blue,thick] (6,2) -- (8,2);
   \draw[-,blue,thick] (0,0) -- (0,2);
   \draw[-,blue,thick] (0,2) -- (1,3);
   \draw[-,blue,thick] (1,3) -- (1,5);
   \draw[-,blue,thick] (1,5) -- (2,6);
   \draw[->,blue,thick] (2,6) -- (2,8);
   \end{tikzpicture}\qquad\qquad\qquad
   \begin{tikzpicture}[scale=0.5]
    \draw[-,blue,thick] (0,0) -- (2,0);
   \draw[-,blue,thick] (2,0) -- (3,1);
   \draw[-,blue,thick] (3,1) -- (4,0);
   \draw[-,blue,thick] (4,0) -- (5,2);
\draw[->,blue,thick] (5,2) -- (8,3);
\draw[-,blue,thick] (0,0) -- (-1,1);
\draw[-,blue,thick] (-1,1) -- (0,2);
\draw[-,blue,thick] (0,2) -- (2,3);
\draw[-,blue,thick] (2,3) -- (0,5);
\draw[-,blue,thick] (0,5) -- (-1,6);
\draw[-,blue,thick] (-1,6) -- (1,7);
\draw[->,blue,thick] (1,7) -- (2,8);
   \end{tikzpicture}
    \begin{quote}
    \caption{Proposition~\ref{prop:expression_matrix_A} can be adapted to apply to general domains, not only quarter planes. On the left, a domain with periodic boundary; on the right, an arbitrary domain.\label{fig:other_boundaries}}
    \end{quote}
\end{figure}
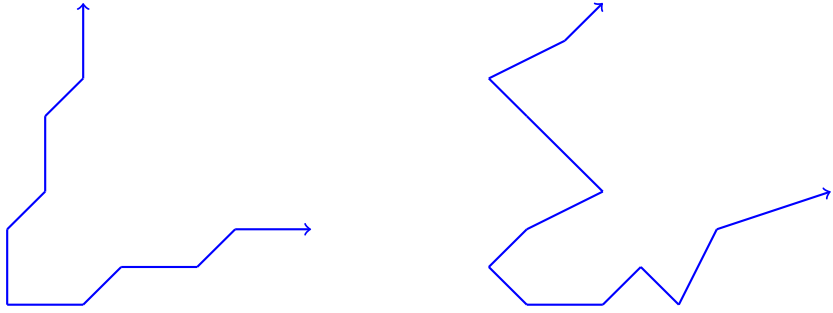

Proposition~\ref{prop:expression_matrix_A} is not new and is equivalent to a result proved in \cite{MiRe09}:
we will compare our formulation with the one contained in {\it loc.\ cit.}\ in Corollary~\ref{cor:rec_MiRe09} below.
To prove Proposition~\ref{prop:expression_matrix_A} we need the following lemma:

\begin{lemma}
\label{lemma:GF_A_rat_RC}
For even values of $k\geq 1$, the generating function \eqref{eq:def_GF_A} is rational and its first poles are at
$t=\pm\bigl(2\cos\bigl(\frac{\pi}{2+k}\bigr)\bigr)^{-1}$.
\end{lemma}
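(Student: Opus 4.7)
The plan is to exploit the matrix-product formula provided by Proposition~\ref{prop:expression_matrix_A}. For $k=2m$ even, fix $Q_k=(k-i,i)\in S_k$ with $0\leq i\leq k$; the series \eqref{eq:def_GF_A} is then the $i$-th entry of the row vector
\begin{equation*}
V(t):=t^{m}\,A_3 F_3\, A_5 F_5\cdots A_{k+1} F_{k+1}.
\end{equation*}
Each $A_n$ has constant integer entries and each $F_n$ is a rational function of $t$ by Lemma~\ref{lem:rationality_1D_RW}, so every entry of $V(t)$ is a rational function of $t$. This settles rationality.

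For the pole location, note that the map $n\mapsto \bigl(2\cos(\pi/(n+1))\bigr)^{-1}$ is strictly decreasing. Among the factors $F_3,F_5,\ldots,F_{k+1}$ appearing in $V$, the one with the smallest radius of convergence is therefore $F_{k+1}$, whose first poles (by Lemma~\ref{lem:rationality_1D_RW}) are at $t=\pm\bigl(2\cos(\pi/(k+2))\bigr)^{-1}$; the other $F_{2j+1}$ with $j<m$ are analytic at these two points. Hence the candidate first poles of every entry of $V$ are exactly $\pm\bigl(2\cos(\pi/(k+2))\bigr)^{-1}$, and it only remains to rule out cancellation.

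The main obstacle is precisely this non-cancellation, which I would establish by a positivity argument at $t_\star:=\bigl(2\cos(\pi/(k+2))\bigr)^{-1}$. Entries of $F_n$ are the power series $W_{n-1}(P_i,P_j,t)$, which have non-negative coefficients with at least one non-zero term (for instance the length-$|i-j|$ path connecting $P_i$ to $P_j$), hence are strictly positive on $\bigl(0,\bigl(2\cos(\pi/(n+1))\bigr)^{-1}\bigr)$, and in particular at $t_\star$ when $n\leq k-1$. Moreover, each $A_{2j+1}$ has non-negative integer entries with zeros only in its first and last columns. A short induction on the partial products $t^{m}A_3F_3\cdots A_{2j+1}F_{2j+1}$ then shows that every intermediate row vector, evaluated at $t_\star$, has all its entries strictly positive; multiplying by $A_{k+1}$ produces a vector $u$ whose interior entries (positions $1$ through $k-1$) are strictly positive while the boundary entries at positions $0$ and $k$ vanish. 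By Lemma~\ref{lem:rationality_1D_RW}, the residue of $F_{k+1}$ at $t_\star$ is a matrix whose entries $c_{i,j,k}$ are all strictly positive. Consequently each coordinate of $u\cdot \mathrm{Res}_{t=t_\star}(F_{k+1})$ is a sum of strictly positive terms, confirming that the pole at $t_\star$ is not cancelled in $V$.

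Finally, to obtain the symmetric pole at $-t_\star$, I would observe that a walk of model $\cA$ from $(0,0)$ to $S_{2m}$ requires exactly $m$ diagonal NE-steps (the only moves changing $i+j$), while the antidiagonal steps preserve parity; hence every such walk has length congruent to $m$ modulo $2$. The series \eqref{eq:def_GF_A} is therefore either even or odd in $t$, so its poles come in $\pm$-pairs, yielding the pole at $-t_\star$ as well.
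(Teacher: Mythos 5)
Your proof follows essentially the same route as the paper's: decomposition of the walk into excursions on the antidiagonal segments (here packaged as the matrix product of Proposition~\ref{prop:expression_matrix_A}, which the paper itself derives from the decomposition~\eqref{eq:decomp_GF}), rationality from Lemma~\ref{lem:rationality_1D_RW}, and non-cancellation of the pole at $t=\bigl(2\cos(\frac{\pi}{2+k})\bigr)^{-1}$ via strict positivity of every term contributing to the residue --- this is exactly the paper's computation of the constant $C>0$ in~\eqref{eq:C_def}, recast in matrix form. Two minor remarks. First, in the paper Proposition~\ref{prop:expression_matrix_A} is proved \emph{after} this lemma, using the decomposition established inside the lemma's proof; to avoid a presentational circularity you should derive the matrix identity directly from the first-step/segment decomposition rather than cite the proposition (mathematically there is no issue, since the matrix formula does not use the lemma's conclusion). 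Second, your parity claim is literally false: it is not true that every walk ending on $S_{2m}$ has length $\equiv m\pmod 2$ (the walk $(0,0)\to(1,1)\to(2,0)$ has length $2$ while $m=1$). What is true, and is all you need, is that for a \emph{fixed} endpoint $(2m-i,i)$ every admissible length is congruent to $i$ modulo $2$, because the difference $\#\{(1,-1)\}-\#\{(-1,1)\}$ is determined by the endpoint and hence so is the parity of the number of antidiagonal steps; thus the fixed-endpoint series is even or odd in $t$ and its poles do come in $\pm$ pairs, as claimed.
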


\begin{proof}
Using the obvious decomposition of a walk from $(0,0)$ to $Q_{2k}\in S_{2k}$ as a sequence of jumps $(1,1)$ followed by fluctuations on the segments $S_2,\ldots,S_{2k}$, see again Figure~\ref{fig:recursive_cons}, one has
   \begin{multline*}
       \#_{\cA}\{(0,0)\stackrel{n+k}{\to} Q_{2k}\} =\sum_{Q_2\in S_2,Q_4\in S_4,\ldots,Q_{2k-2}\in S_{2k-2}} \\\sum_{n_1+\cdots +n_k = n} w_2((1,1),Q_2,n_1)w_4(Q_2+(1,1),Q_4,n_2)\cdots w_{2k}(Q_{2k-2}+(1,1),Q_{2k},n_k).
   \end{multline*}
As a consequence,
   \begin{multline}
   \label{eq:decomp_GF}
       \sum_{n\geq0} \#_{\cA}\{(0,0)\stackrel{n}{\to} Q_{2k}\}t^{n} =\\t^k \sum_{Q_2\in S_2,Q_4\in S_4,\ldots,Q_{2k-2}\in S_{2k-2}} W_2((1,1),Q_2,t)\cdots W_{2k}(Q_{2k-2}+(1,1),Q_{2k},t).
   \end{multline}
Using now Lemma~\ref{lem:rationality_1D_RW}, it appears that we wrote the series \eqref{eq:def_GF_A} as a finite sum of rational functions, with explicitly given radii of convergence. 

To show that the singularity of $\sum_{n\geq0} \#_{\cA}\{(0,0)\stackrel{n}{\to} Q_{2k}\}t^{n}$ in \eqref{eq:decomp_GF} at $t=\bigl(2\cos\bigl(\frac{\pi}{2+k}\bigr)\bigr)^{-1}$ is not removable, we use Equation~\eqref{eq:series_expansion_Wk} from Lemma~\ref{lem:rationality_1D_RW}. More specifically, we perform an expansion of \eqref{eq:decomp_GF} at $t=\bigl(2\cos\bigl(\frac{\pi}{2+k}\bigr)\bigr)^{-1}$. We have
\begin{equation*}
    \sum_{n\geq0} \#_{\cA}\{(0,0)\stackrel{n}{\to} Q_{2k}\}t^{n} = \frac{C}{\bigl(2\cos(\frac{\pi}{2+k})\bigr)^{-1} -t}+O(1),
\end{equation*}
where $C$ can be computed as follows, setting $t_0=\bigl(2\cos(\frac{\pi}{2+k})\bigr)^{-1}$:
\begin{equation}
\label{eq:C_def}
    C=t_0^{-k}
    \sum_{Q_2\in S_2,Q_4\in S_4,\ldots,Q_{2k-2}\in S_{2k-2}} W_2((1,1),Q_2,t_0)\cdots W_{2k-2}(Q_{2k-4}+(1,1),Q_{2k-2},t_0)\widetilde c_{Q_{2k-2},Q_{2k},k},
\end{equation}
where $\widetilde c_{Q_{2k-2},Q_{2k},k}=c_{i,j,k}$ in \eqref{eq:series_expansion_Wk}, with the indices $i$ and $j$ corresponding respectively to $P_i=Q_{2k-2}+(1,1)$ and $P_j=Q_{2k}$. In \eqref{eq:C_def}, 
$C$ appears as a sum of terms which are all positive, therefore $C>0$.
The proof is complete.
\end{proof}

\begin{proof}[Proof of Proposition~\ref{prop:expression_matrix_A}]
Given $k\geq 1$, the matrix $A_{2k+1}$ describes how the walk jumps from the segment $S_{2k-2}$ to the next one, $S_{2k}$, and may thus be called a \emph{jump matrix}. More precisely, in the model $\cA$, if the walk exits the segment $S_{2k-2}$ at the point $(2k-2-i,i)$, then it will enter $S_{2k}$ at position $(2k-2-i,i)+(1,1) = (2k-1-i,i+1)$; see Figure~\ref{fig:recursive_cons}. From a matrix point of view, this corresponds to the following computation: given the vector $e_{i}^{2k-1}:=(0,\ldots,0,1,0,\ldots,0)\in \mathcal M_{1,2k-1}(\mathbb R)$ with the $1$ at position $i$, one has
\begin{equation*}
    e_{i}^{2k-1} A_{2k+1} = e_{i+1}^{2k+1}.
\end{equation*}
The proof of Proposition~\ref{prop:expression_matrix_A} is then immediate  from \eqref{eq:decomp_GF} and using Lemma~\ref{lem:rationality_1D_RW}.
\end{proof}

In this part we compare our results to that of Mishna and Rechnitzer \cite{MiRe09}.
They consider the generating function
\begin{equation}
\label{eq:Dk_MiRe09}
    D_{k}(y) = \sum_{i=0}^k \sum_{n\geq 0} \#_{\cA}\{(0,0)\stackrel{n}{\to} (k-i,i)\}t^n y^i \; \in\mathbb Q(t)[y] \cap \mathbb Q [y][[t]].
\end{equation}
It is clear that $D_0(y) = 1$ and $D_{2\ell+1}(y)=0$ for all $\ell \geq 0$.
Mishna and Rechnitzer further prove, with the notation $t=\frac{q}{1+q^2}$, that the following result holds.
\begin{corollary}[{\cite[Eq.~(45)]{MiRe09}}]
\label{cor:rec_MiRe09}
The generating function $D_k(y)$ satisfies the functional equation
\begin{equation}
\label{eq:rec_MiRe09}
    (q^{k+2}+1)(yq-1)(y-q)D_k(y) = q^3(y^{k+2}+1)D_{k-2}(q)-qy^2(q^{k+2}+1)D_{k-2}(y).
\end{equation}
\end{corollary}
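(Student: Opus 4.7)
My plan is to derive \eqref{eq:rec_MiRe09} from the kernel functional equation \eqref{eq:kernel-funct-eq} via the substitution $(x,y)\mapsto (x,xy)$, which repackages $Q(x,y)$ as an antidiagonal generating function. Writing $Q(x,y)=\sum_{i,j} q_{i,j} x^i y^j$, one has
\begin{equation*}
Q(x,xy)=\sum_{i,j} q_{i,j} x^{i+j}y^j = \sum_{k\geq 0}x^k D_k(y).
\end{equation*}
Since $\chi_{\cA}(x,xy)=y+x^2y+y^{-1}$, the kernel becomes $K(x,xy)=x^2\bigl(y-t(1+y^2)-tx^2y^2\bigr)$, so \eqref{eq:kernel-funct-eq} specialized at $(x,xy)$ and divided by $x^2$ reads
\begin{equation*}
\bigl(y - t(1+y^2) - tx^2y^2\bigr)\sum_{k\geq 0} x^k D_k(y) \;=\; y - t\,Q(x,0) - ty^2\,Q(0,xy).
\end{equation*}

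Next I would exploit the symmetry of $\cA$ with respect to the first diagonal: it implies $q_{i,j}=q_{j,i}$, hence $D_k(y)=y^k D_k(1/y)$ and in particular $[y^k]D_k(y)=q_{0,k}=q_{k,0}=D_k(0)$. Consequently $Q(x,0) = \sum_k D_k(0)x^k$ and $Q(0,xy) = \sum_k D_k(0)(xy)^k$. Extracting the coefficient of $x^k$ for $k\geq 2$ gives the one-step recursion
\begin{equation*}
\bigl(y - t(1+y^2)\bigr) D_k(y) - ty^2 D_{k-2}(y) \;=\; -t\,D_k(0)\bigl(1+y^{k+2}\bigr).
\end{equation*}
With the parametrization $t=q/(1+q^2)$, the quadratic $y-t(1+y^2)$ factors as $-(y-q)(qy-1)/(1+q^2)$, and multiplying through by $-(1+q^2)$ yields
\begin{equation*}
(y-q)(qy-1) D_k(y) \;=\; -q y^2 D_{k-2}(y) + q\,D_k(0)\bigl(1+y^{k+2}\bigr).
\end{equation*}

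The unknown boundary series $D_k(0)$ is then eliminated by a standard kernel-cancellation argument: specializing at the root $y=q$ of $(y-q)(qy-1)$ forces the left-hand side to vanish and produces $D_k(0)=q^2 D_{k-2}(q)/(1+q^{k+2})$. Substituting back and clearing the denominator by multiplying through by $(1+q^{k+2})$ gives \eqref{eq:rec_MiRe09} exactly. The only step requiring some care is the symmetry identification $[y^k]D_k(y)=D_k(0)$, without which the right-hand side would involve a further unknown series; beyond that, the argument is an algebraic manipulation carried out within the classical kernel-method framework.
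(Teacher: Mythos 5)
Your derivation is correct, but it is not the route the paper takes. You re-derive the identity from the kernel equation \eqref{eq:kernel-funct-eq}: the substitution $y\mapsto xy$ turns $Q(x,xy)$ into the antidiagonal series $\sum_k x^k D_k(y)$, extracting the coefficient of $x^k$ gives a one-step recursion with the single unknown $D_k(0)$ (the diagonal symmetry of $\cA$ being exactly what collapses $q_{0,k}$ and $q_{k,0}$ into one unknown, as you note), and cancelling the kernel at $y=q$ eliminates it. The algebra checks out: with $t=q/(1+q^2)$ one has $y-t(1+y^2)=-(qy-1)(y-q)/(1+q^2)$, the specialization $y=q$ is legitimate since $D_k(y)\in\mathbb Q(q)[y]$, and substituting $D_k(0)=q^2D_{k-2}(q)/(1+q^{k+2})$ back and clearing the denominator yields \eqref{eq:rec_MiRe09} exactly. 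This is essentially the original iterated-kernel derivation of Mishna and Rechnitzer, to whom the statement is attributed. The paper is making a different point: the statement is labelled a Corollary because the authors deduce it from their matrix-product formula (Proposition~\ref{prop:expression_matrix_A}), identifying $D_{2k}(y)$ with the row vector $t^kA_3F_3\cdots A_{2k+1}F_{2k+1}$ and lifting \eqref{eq:rec_MiRe09} to an identity between matrices built from the tridiagonal blocks $F_n$. Your argument is shorter and self-contained, and it buys a proof that needs nothing beyond the functional equation; what it does not buy is the consistency check the authors are after, namely that their matrix formalism reproduces the known recursion.
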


We now show how we can deduce Corollary~\ref{cor:rec_MiRe09} from our Proposition~\ref{prop:expression_matrix_A}.
\begin{proof}[Proof of Corollary~\ref{cor:rec_MiRe09}]
We shall identify polynomials with row vectors. Therefore, the generating function $D_{2k}$ in \eqref{eq:Dk_MiRe09} is exactly the row vector appearing in the left-hand side of the main equation in Proposition~\ref{prop:expression_matrix_A}.

We shall first rewrite \eqref{eq:rec_MiRe09} in an equivalent way, simply by dividing the left- and the right-hand sides by $q(q^{k+2}+1)/t$, and then replacing $k$ by $2k$. We get
\begin{equation}
\label{eq:rec_MiRe09_bis}
    (ty^2-y+t)D_{2k}(y) = tq^2\frac{y^{2k+2}+1}{q^{2k+2}+1}D_{2k-2}(q)-ty^2D_{2k-2}(y).
\end{equation}
Using the identification between polynomials and row vectors, \eqref{eq:rec_MiRe09_bis} is an equality between row vectors of length $2k+3$.

Using now Proposition~\ref{prop:expression_matrix_A}, the left-hand side of \eqref{eq:rec_MiRe09_bis} may be written as
\begin{equation*}
    (ty^2-y+t)D_{2k}(y) = t^kA_3 F_3 \cdots A_{2k+1}F_{2k+1}\begin{pmatrix}
t & -1 & t &\cdots &\cdots & 0 \\
0 & t & -1 & t &\cdots & 0 \\
\vdots  & \vdots  & \ddots & \ddots& \ddots & \vdots  \\
0 & \cdots & \cdots& t & -1 & t
\end{pmatrix},
\end{equation*}
where the last matrix belongs to $\mathcal M_{2k+1,2k+3}(\mathbb R[t])$. The product of the last three matrices simplifies remarkably:
\begin{equation*}
   \bigl( t(F_{2k+1})_{\llbracket 2,2k\rrbracket,1} \vert 0\vert  -I_{2k-1} \vert 0\vert t(F_{2k+1})_{\llbracket 2,2k\rrbracket,2k+1} \bigr)\in\mathcal M_{2k-1,2k+3}(\mathbb R(t)),
\end{equation*}
where we used the following notation: $(F_{2k+1})_{\llbracket 2,2k\rrbracket,1}$ is a column vector of length $2k-1$ formed by the elements $(2,1)$ up to $(2k,1)$ of the matrix $F_{2k+1}$.

The last term in \eqref{eq:rec_MiRe09_bis} is
\begin{equation*}
    -ty^2D_{2k-2}(y) = -t^{k-1}A_3 F_3 \cdots A_{2k-1}F_{2k-1} t \bigl( 0\vert 0\vert I_{2k-1}\vert 0\vert 0 \bigr).
\end{equation*}
We can now state a matrix identity which is a lifting of \eqref{eq:rec_MiRe09_bis} at the matrix level (somehow a factorization by the common prefix $t^{k}A_3 F_3 \cdots A_{2k-1}F_{2k-1}$):
\begin{multline*}
    A_{2k+1}F_{2k+1}\begin{pmatrix}
t & -1 & t &\cdots &\cdots & 0 \\
0 & t & -1 & t &\cdots & 0 \\
\vdots  & \vdots  & \ddots & \ddots& \ddots & \vdots  \\
0 & \cdots & \cdots& t & -1 & t
\end{pmatrix} + \bigl( 0\vert 0\vert I_{2k-1}\vert 0\vert 0 \bigr) \\- qtA_{2k+1}F_{2k+1}
\begin{pmatrix}
1 & 0  &\cdots & 0 & 0 \\
0 & 0  &  & \vdots  & \vdots\\
\vdots   & \vdots&  & 0& 0  \\
0 & 0 & \cdots & 0 & 1
\end{pmatrix}=0.
\end{multline*}
This concludes the proof.
\end{proof}

\subsection{Preliminary results for the models \texorpdfstring{$\boldsymbol{\cB}$, $\boldsymbol{\cC}$, $\boldsymbol{\cD}$ and $\boldsymbol{\cE}$}{B, C, D and E}}

In this subsection we denote by $\cS$ any of the following step sets: $\cB$, $\cC$, $\cD$ or $\cE$, thereby extending the results to all models of Table~\ref{tab:list}.
As above, the number of $n$-length $\cS$-walks from $(0,0)$ to an arbitrary point $Q_k\in S_k$ is denoted by $\#_{\cS}\{(0,0)\stackrel{n}{\to} Q_k\}$. We begin with a statement analogous to Lemma~\ref{lemma:GF_A_rat_RC}.
\begin{lemma}
\label{lemma:GF_BCDE_rat_RC}
If $\cS$ is not $\cA$, then for any $k\geq 1$ and any $Q_k\in S_k$, the generating function
\begin{equation}
\label{eq:def_GF_L}
    \sum_{n\geq0} \#_{\cS}\{(0,0)\stackrel{n}{\to} Q_k\}t^n
\end{equation}
is rational, with first pole at  $t=\bigl(2\cos\bigl(\frac{\pi}{2+k}\bigr)\bigr)^{-1}$.
\end{lemma}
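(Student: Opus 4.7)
The plan is to mimic the proof of \cref{lemma:GF_A_rat_RC}, replacing the unique ``up-jump'' $(1,1)$ available for model $\cA$ by the collection $\cJ(\cS) \subset \cS$ of steps that strictly increase $i+j$, for $\cS \in \{\cB,\cC,\cD,\cE\}$. Concretely, $\cJ(\cB)=\{\textsf{N},\textsf{E}\}$, $\cJ(\cC)=\{\textsf{N},\textsf{NE},\textsf{E}\}$, $\cJ(\cD)=\{\textsf{N}\}$ and $\cJ(\cE)=\{\textsf{N},\textsf{NE}\}$. Each of these steps raises $i+j$ by $1$ or $2$, while the remaining steps \textsf{NW} and \textsf{SE} preserve the antidiagonal segment $S_k$.

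First I would observe that any walk from $(0,0)$ to $Q_k \in S_k$ decomposes uniquely as an alternating sequence of antidiagonal fluctuations inside segments $S_{k_0}, S_{k_1}, \ldots, S_{k_m}$ (with $0 = k_0 < k_1 < \cdots < k_m = k$ and each increment $k_i - k_{i-1} \in \{1,2\}$) separated by single transverse jumps taken from $\cJ(\cS)$. Summing over all admissible sequences $(k_i)$, the exit point from each $S_{k_{i-1}}$ and the compatible transverse jump type at each transition (which determines the entry point on $S_{k_i}$) gives
\begin{equation*}
    \sum_{n\geq0} \#_{\cS}\{(0,0)\stackrel{n}{\to} Q_k\}t^n = \sum_{\text{admissible data}} t^{m} \prod_{i=0}^{m} W_{k_i}(R_i, R'_i, t),
\end{equation*}
where $R_i$, $R'_i$ denote the entry and exit points within $S_{k_i}$ (with $R_0 = (0,0)$ and $R'_m = Q_k$). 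Since the outer sum is finite and each $W_{k_i}$ is rational by \cref{lem:rationality_1D_RW}, the generating function is rational.

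Next I would pin down the first pole. By \cref{lem:rationality_1D_RW}, each factor $W_{k_i}(\cdot,\cdot,t)$ is analytic for $|t| < (2\cos(\pi/(k_i+2)))^{-1}$; as this bound is decreasing in $k_i$ and $k_i \leq k$, every summand is analytic on $|t| < t_0 := (2\cos(\pi/(k+2)))^{-1}$. To show that the singularity at $t_0$ is not removable, I would note that in every summand the only factor diverging at $t_0$ is $W_{k_m} = W_k$, which has a simple pole there with strictly positive leading coefficient (by \eqref{eq:series_expansion_Wk}). The remaining factors $W_{k_i}(R_i, R'_i, t_0)$ with $k_i < k$ are strictly positive real numbers, being power series with nonnegative integer coefficients evaluated at the positive real $t_0$ inside their disk of convergence (strict positivity follows from the existence of at least one antidiagonal walk from $R_i$ to $R'_i$, ensured because \textsf{NW} and \textsf{SE} belong to all four models). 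Hence every summand contributes a simple pole at $t_0$ with strictly positive coefficient; no cancellation is possible, and $t_0$ is indeed the first positive pole.

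The main obstacle I anticipate is purely combinatorial bookkeeping: enumerating the admissible sequences $(k_i)$, the entry/exit points and the jump types for each of the four models adds a layer absent from the proof for $\cA$, and a clean matrix-product reformulation à la \cref{prop:expression_matrix_A} would require distinct jump matrices for each of the step sets. The analytic heart of the argument, however — isolating $W_k$ as the unique source of the singularity at $t_0$ and invoking positivity of every other factor — transfers essentially unchanged from \cref{lemma:GF_A_rat_RC}.
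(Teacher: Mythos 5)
Your proposal is correct and follows exactly the route the paper intends (and leaves implicit) for this lemma: the segment-plus-transverse-jump decomposition underlying Propositions~\ref{prop:expression_matrix_B_D} and~\ref{prop:expression_matrix_C_E}, combined with the rationality and positivity statements of Lemma~\ref{lem:rationality_1D_RW}, mirroring the proof of Lemma~\ref{lemma:GF_A_rat_RC}. In particular, your observation that only the final factor $W_k$ is singular at $t_0$ and that all other factors are strictly positive there is precisely the non-cancellation argument used for model $\cA$.
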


The rationality property remains valid for model $\cA$, see Lemma~\ref{lemma:GF_A_rat_RC}; however, due to parity issues the series \eqref{eq:def_GF_L} is zero when $\cS=\cA$ and $k$ is odd, hence a separate statement.

Proposition~\ref{prop:expression_matrix_A} may be extended immediately to models $\cB$ and $\cD$, replacing the jump matrices $A_n$ by $B_n$ and $D_n$ in $\mathcal M_{n-1,n}(\mathbb R)$, as follows:
\begin{equation}
\label{eq:def_B_D_n}
    D_n=\begin{pmatrix}
0 & 1 &0&\cdots   & 0\\
0 & 0 & 1  &\ddots&  0 \\
\vdots  & \vdots  & \ddots & \ddots  &\vdots \\
0 & \cdots & \cdots & 0 & 1
\end{pmatrix}
\quad \text{and} \quad
B_n=\begin{pmatrix}
0 & 1 &0&\cdots  & \cdots& 0\\
1 & 0 & 1  &\ddots& & \vdots \\
0 & 1 &0 & 1 & \ddots &\vdots \\
\vdots  & \ddots  & \ddots & \ddots & \ddots  &0 \\
\vdots &  & \ddots  & 1 & 0  &1 \\
0 & \cdots& 0 & 0 & 1 & 0
\end{pmatrix}.
\end{equation}
More precisely:
\begin{proposition}
\label{prop:expression_matrix_B_D}
For any $k\geq 1$, one has
\begin{align*}
\left(\sum_{n\geq0} \#_{\cB}\{(0,0)\stackrel{n}{\to} (k-i,i)\}t^n\right)_{0\leq i\leq k}& =t^kB_2 F_2 \cdots B_{k+1}F_{k+1},\\
   \left(\sum_{n\geq0} \#_{\cD}\{(0,0)\stackrel{n}{\to} (k-i,i)\}t^n\right)_{0\leq i\leq k}& =t^kD_2 F_2 \cdots D_{k+1}F_{k+1},
\end{align*}
with $B_n$, $D_{n}$ and $F_{n}$ as in \eqref{eq:def_B_D_n} and \eqref{eq:def_F_n}, respectively.
\end{proposition}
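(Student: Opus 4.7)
The plan is to mimic, essentially verbatim, the proof of Proposition~\ref{prop:expression_matrix_A}, adapting only the jump structure and the parity considerations. The crucial structural difference is that in models $\mathcal{B}$ and $\mathcal{D}$ the non-antidiagonal steps are $\textsf{N}=(0,1)$ (for both) and $\textsf{E}=(1,0)$ (for $\mathcal{B}$ only), each of which advances the sum $i+j$ by one; consequently, \emph{every} antidiagonal $S_m$ with $0\leq m\leq k$ is traversed (no parity issue as in $\mathcal{A}$), and a walk ending on $S_k$ contains exactly $k$ forward jumps. This is why the announced product ranges over $m=2,\ldots,k+1$ rather than over even indices only.

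More concretely, I would decompose any walk from $(0,0)$ to $(k-i,i)\in S_k$ uniquely as an alternating sequence of on-segment fluctuations (using the steps $\textsf{NW}$ and $\textsf{SE}$, which are common to both models and identical to those used in Lemma~\ref{lem:rationality_1D_RW}) and forward transitions from $S_{m-1}$ to $S_m$. Exactly as in \eqref{eq:decomp_GF}, this yields, for a fixed endpoint $Q_k\in S_k$, an expression of $\sum_{n\geq0}\#_{\cS}\{(0,0)\stackrel{n}{\to}Q_k\}\,t^n$ as a sum, weighted by $t^k$, of products
\[
W_1(R_1,R_1',t)\cdot W_2(R_2,R_2',t)\cdots W_k(R_k,R_k',t),
\]
where $R_m\in S_m$ is the entry point of the walk on $S_m$, $R_m'\in S_m$ is its exit point, and the pairs $(R_m',R_{m+1})$ are linked by a forward step. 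By Lemma~\ref{lem:rationality_1D_RW}, each fluctuation factor $W_m(R_m,R_m',t)$ is the $(R_m,R_m')$-entry of $F_{m+1}$, so the summation reorganises into a matrix product, with the forward-jump step between $S_{m-1}$ and $S_m$ encoded by the $(m)\times(m+1)$ matrix describing which positions of $S_m$ are accessible from each position of $S_{m-1}$.

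It then remains to verify that the jump matrices $D_{m+1}$ and $B_{m+1}$ defined in \eqref{eq:def_B_D_n} implement the correct rule. For model $\mathcal{D}$, the only forward step is $\textsf{N}$, which sends $P_i=(m-1-i,i)\in S_{m-1}$ to $(m-1-i,i+1)=P_{i+1}\in S_m$; this is precisely the action $e_i\mapsto e_{i+1}$ of $D_{m+1}$. For model $\mathcal{B}$, the step $\textsf{E}$ additionally sends $P_i\in S_{m-1}$ to $P_i\in S_m$, so the action is $e_i\mapsto e_i+e_{i+1}$, matching $B_{m+1}$. Assembling these pieces, and noting that the initial state is the unique point $(0,0)\in S_0$ (so no initial fluctuation matrix is required and the first factor is directly $B_2$ or $D_2$), gives the stated identities. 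The only real obstacle is bookkeeping: one must be careful with the indexing conventions on each segment (the source segment $S_{m-1}$ has one fewer point than the target $S_m$, which is why each jump matrix sits in $\mathcal{M}_{m,m+1}(\mathbb{R})$) and with the factor $t$ picked up at each of the $k$ forward steps. No new analytic or algebraic input is needed beyond what was already used for Proposition~\ref{prop:expression_matrix_A}.
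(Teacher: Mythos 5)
Your argument is correct and is precisely the one the paper intends (the paper itself only asserts that Proposition~\ref{prop:expression_matrix_A} ``may be extended immediately''): the same decomposition into on-segment fluctuations and forward jumps, the same appeal to Lemma~\ref{lem:rationality_1D_RW}, and the correct identification of the jump actions $\textsf{N}\colon P_i\mapsto P_{i+1}$ and $\textsf{E}\colon P_i\mapsto P_i$, together with the correct dimension count and the observation that no fluctuation matrix is needed on $S_0$. One remark: the action you derive for model $\cB$, namely $e_i\mapsto e_i+e_{i+1}$, is realized by the upper bidiagonal $0$--$1$ matrix with ones in positions $(i,i)$ and $(i,i+1)$, which does \emph{not} coincide with $B_n$ as typeset in \eqref{eq:def_B_D_n} (that display has zero diagonal); a sanity check at $k=1$ --- the generating function of $\cB$-walks from the origin to $(1,0)$ equals $t/(1-t)$, which forces $B_2=\begin{pmatrix}1&1\end{pmatrix}$ rather than $\begin{pmatrix}0&1\end{pmatrix}$ --- confirms that your version is the right one, so the discrepancy is a typo in the displayed definition rather than a gap in your proof.
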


Models $\cC$ and $\cE$ contain some more intrinsic complexity, due to the following: if a walk is on the segment $S_k$, the next step could bring it either on $S_{k+1}$ or on $S_{k+2}$.

Given $k\geq 1$, we will call $\cC_k$ the set of compositions of $k$ with $1$'s and $2$'s, i.e., the set of all (ordered) $p$-tuples $(n_1,\ldots,n_p)$ such that $n_i\in\{1,2\}$ and $n_1+\cdots +n_p=k$. Though not crucial for our analysis, we observe that $\cC_k$ is a set of cardinal given by the $k$-th Fibonacci number.

Let us introduce the following notation: given $n\in\{1,2\}$ and $m\geq 1$, $J(n)_m$ denotes the following matrix:
\begin{equation*}
    J(n)_m = \left\{\begin{array}{rl}
    A_m & \text{if } n=2,\\
    B_m & \text{if } n=1 \text{ and the model is $\cC$},\\
    D_m & \text{if } n=1 \text{ and the model is $\cE$}.
    \end{array}\right.
\end{equation*}

\begin{proposition}
\label{prop:expression_matrix_C_E}
For the models $\cC$ and $\cE$, one has
\begin{multline*}
    \left(\sum_{n\geq0} \#\{(0,0)\stackrel{n}{\to} (k-i,i)\}t^n\right)_{0\leq i\leq k} =\\
    \sum_{(n_1,\ldots,n_p)\in\cC_k} t^p J(n_1)_{n_1+1}F_{n_1+1} J(n_2)_{n_1+n_2+1}F_{n_1+n_2+1}\cdots J(n_p)_{n_1+\cdots+n_p+1}F_{n_1+\cdots+n_p+1}.
\end{multline*}
\end{proposition}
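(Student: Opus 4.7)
The plan is to extend the decomposition used in the proofs of Propositions~\ref{prop:expression_matrix_A} and~\ref{prop:expression_matrix_B_D}. The new feature in models $\cC$ and $\cE$ is that a walk may leave a segment $S_m$ in two distinct ways: a \emph{type-$1$} inter-segment step landing in $S_{m+1}$ (step N or E in $\cC$; step N in $\cE$), or a \emph{type-$2$} step landing in $S_{m+2}$ (step NE). The remaining steps NW and SE keep the walk on its current segment.

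First I would attach to every walk $\gamma$ from $(0,0)$ to a point of $S_k$ the ordered sequence $(n_1,\dots,n_p)$ of the types of its successive inter-segment steps. Since the total increment of $i+j$ along $\gamma$ equals $k$, this sequence necessarily lies in $\cC_k$, which partitions the walks accordingly. Once $(n_1,\dots,n_p)\in\cC_k$ is fixed, $\gamma$ is reconstructed uniquely from: (i)~an arbitrary NW/SE fluctuation on each intermediate segment $S_{n_1+\cdots+n_j}$ for $0\leq j\leq p$; and (ii)~for each $1\leq j\leq p$, the choice of the specific inter-segment step realizing the prescribed type $n_j$ (unique when $n_j=2$; either N or E when $n_j=1$ in model $\cC$).

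The transfer-matrix argument then applies verbatim for each composition: for fixed $(n_1,\dots,n_p)\in\cC_k$, the row-vector generating function of walks from $(0,0)$ to $S_k$ whose inter-segment sequence of types is exactly $(n_1,\dots,n_p)$ equals
\begin{equation*}
    t^p\, J(n_1)_{n_1+1}F_{n_1+1}\, J(n_2)_{n_1+n_2+1}F_{n_1+n_2+1}\, \cdots\, J(n_p)_{n_1+\cdots+n_p+1}F_{n_1+\cdots+n_p+1},
\end{equation*}
with each $F_{m+1}$ encoding the NW/SE fluctuations on $S_m$ by Lemma~\ref{lem:rationality_1D_RW}, each $J(n_j)_{\cdot}$ encoding the transition induced by a single inter-segment step of the prescribed type, and the factor $t^p$ weighting the $p$ inter-segment steps. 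Summing over $\cC_k$ yields the announced formula. The only verification that demands care is the dimensional bookkeeping: writing $m_j=n_1+\cdots+n_j$, if $n_j=1$ then $J(1)_{m_j+1}$ must have size $m_j\times(m_j+1)$, which is consistent with the shape of $B_{m_j+1}$ and $D_{m_j+1}$; if $n_j=2$ then $J(2)_{m_j+1}$ must have size $(m_j-1)\times(m_j+1)$, consistent with $A_{m_j+1}$. I do not foresee any genuine obstacle: the proof is a clean repackaging of the recursive structure already exploited in Propositions~\ref{prop:expression_matrix_A} and~\ref{prop:expression_matrix_B_D}, and the chief subtlety is simply the indexing over $\cC_k$.
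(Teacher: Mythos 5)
Your proof is correct and follows exactly the route the paper intends: the paper states Proposition~\ref{prop:expression_matrix_C_E} without an explicit proof, presenting it as the extension of the decomposition underlying Proposition~\ref{prop:expression_matrix_A} (jumps interleaved with antidiagonal fluctuations, cf.\ Equation~\eqref{eq:decomp_GF}), and your partition of the walks by the composition $(n_1,\dots,n_p)\in\cC_k$ recording the types of the successive inter-segment steps is precisely the missing bookkeeping. Your dimension check (the $j$-th jump matrix sends a row vector of length $m_{j-1}+1$ to one of length $m_j+1$, matching $\mathcal M_{n-1,n}$ for $B_n,D_n$ and $\mathcal M_{n-2,n}$ for $A_n$) is also accurate.
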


\subsection{End of the proof of Theorem~\ref{thm:cont_1/2}}

The matrix-expressions of the generating functions obtained in Propositions~\ref{prop:expression_matrix_A}, \ref{prop:expression_matrix_B_D} and \ref{prop:expression_matrix_C_E}
immediately entail Proposition~\ref{prop:cont_1/2}, namely, that the series $Q(x,y,\tfrac{1}{2})$ in the variables $x$ and $y$ has finite coefficients:

\begin{proof}[Proof of Proposition~\ref{prop:cont_1/2}]
Let $\cS$ be any step set in Table~\ref{fig:example_path}. We can reformulate \eqref{eq:def_generating_function} as
    \begin{equation*}
    Q_\cS(x,y,t)=\sum_{k\geq 0}\sum^k_{i=0}x^{k-i}y^{i}
    \sum_{n\geq0} \#_{\cS}\{(0,0)\stackrel{n}{\to} (k-i,i)\}t^n.
    \end{equation*}
Using now Propositions~\ref{prop:expression_matrix_A}, \ref{prop:expression_matrix_B_D} and \ref{prop:expression_matrix_C_E}, we obtain that for any $Q_k=(k-i,i)\in S_k$, the series $\sum_{n\geq0} \#_{\cS}\{(0,0)\stackrel{n}{\to} Q_k\}t^n$ has non-negative coefficients and converges uniformly on
the closed disk $\vert t\vert\leq  \frac{1}{2}$, as indeed for any $k\geq 0$, \begin{equation*}
    \frac{1}{2}< \frac{1}{2\cos(\frac{\pi}{2+k})}.
\end{equation*}
Therefore the series
$Q_\cS (x,y,\tfrac{1}{2})$
is well defined in $\R\llbracket x,y\rrbracket$.
\end{proof}

We can actually say more:
\begin{proposition}
\label{prop:rational_coeff}
For any step set $\cS$ and any $Q_k\in S_k$, we have \begin{equation*}
    \sum_{n\geq0} \frac{\#_{\cS}\{(0,0)\stackrel{n}{\to} Q_k\}}{2^n} \in\mathbb Q.
\end{equation*}
\end{proposition}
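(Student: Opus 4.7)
The plan is to combine the explicit matrix-product expressions for the row vectors of generating functions (Propositions~\ref{prop:expression_matrix_A}, \ref{prop:expression_matrix_B_D}, \ref{prop:expression_matrix_C_E}) with the explicit rational form of $F_n|_{t=1/2}$ displayed in~\eqref{eq:def_F_n_inverse_at_1/2}. The univariate series $R(t) := \sum_{n\geq 0} \#_{\cS}\{(0,0)\stackrel{n}{\to} Q_k\}\, t^n$ is, by Lemmas~\ref{lemma:GF_A_rat_RC} and~\ref{lemma:GF_BCDE_rat_RC}, a rational function of $t$ whose first singularity on the positive real axis is at $(2\cos(\pi/(k+2)))^{-1} > 1/2$. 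Hence the series converges absolutely at $t=1/2$ and its value equals the evaluation at $t=1/2$ of the underlying rational function; so it suffices to show that this evaluation is rational.

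To do so, I would read off the matrix-product formulas at $t=1/2$. Each jump matrix $A_n$, $B_n$, $D_n$ (hence each $J(n)_m$) has entries in $\{0,1\}\subset \mathbb{Q}$, and~\eqref{eq:def_F_n_inverse_at_1/2} shows that every entry of $F_n|_{t=1/2}$ is rational (explicitly $2 i(n-j+1)/(n+1)$ for $i\leq j$). The scalar factor $t^p|_{t=1/2}=1/2^p$ is rational, and for models $\cC$ and $\cE$ the outer sum runs over the finite set $\cC_k$. Thus the full product (or finite sum of products) evaluates, entry by entry, to a rational number at $t=1/2$, and in particular so does the coordinate corresponding to $Q_k\in S_k$.

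Finally, I would observe that this coordinate is exactly $R(1/2) = \sum_{n\geq 0} \#_{\cS}\{(0,0)\stackrel{n}{\to} Q_k\}/2^n$ (the convergence in Proposition~\ref{prop:cont_1/2} justifies interpreting the left-hand side as the numerical sum). This yields the claim for every $\cS$ and every $Q_k$. I do not expect any genuine obstacle: the key computational input is the closed form~\eqref{eq:def_F_n_inverse_at_1/2}, and the remainder is a book-keeping argument on the matrix formulas already established.
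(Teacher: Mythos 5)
Your proposal is correct and follows essentially the same route as the paper: both reduce the claim to the matrix-product formulas of Propositions~\ref{prop:expression_matrix_A}, \ref{prop:expression_matrix_B_D} and \ref{prop:expression_matrix_C_E} and to the rationality of the entries of $F_n$ at $t=\tfrac12$. The only (cosmetic) difference is that you invoke the explicit closed form~\eqref{eq:def_F_n_inverse_at_1/2} directly, whereas the paper's main argument only uses that $F_n$, being the inverse of a matrix over $\mathbb{Q}(t)$, has entries in $\mathbb{Q}(t)$ regular at $t=\tfrac12$, and mentions~\eqref{eq:def_F_n_inverse_at_1/2} as a refinement; your explicit care in identifying the convergent numerical sum with the evaluation of the rational function is a welcome precision.
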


\begin{proof}
The matrix-expressions obtained in Propositions~\ref{prop:expression_matrix_A}, \ref{prop:expression_matrix_B_D} and \ref{prop:expression_matrix_C_E} entail that
$\sum_{n\geq0}\#_{\cS}\{(0,0)\stackrel{n}{\to} Q_k\} t^n$ can be computed as the coefficient of a product of matrices involving the $F_n$ in \eqref{eq:def_F_n} and other matrices with integer coefficients. 
Being the inverse of a matrix whose  coefficients belong to $\mathbb Q(t)$, the coefficients of $F_n$ should be in $\mathbb Q(t)$ as well. As an immediate consequence, all the coefficients of $F_n$ are rational numbers as soon as $t\in\mathbb Q$, in particular at $t=\frac{1}{2}$, as claimed in Proposition~\ref{prop:rational_coeff}. 

Notice that this statement can be made more precise, using the explicit coefficients of $F_n$ computed in Proposition~\ref{prop:coeff_F_n} for general values of $t$, and their simplification at $t=\frac{1}{2}$ derived in~\eqref{eq:def_F_n_inverse_at_1/2}. In particular, the latter identity implies that for all $k\geq 1$ and all $P,Q\in S_k$, the quantity $W_k(P,Q,\frac{1}{2})\in \frac{1}{k+2}\cdot \mathbb N$ (recall from \eqref{eq:identities_matrix_inverse} that $W_k(P,Q,\frac{1}{2})$ are the coefficients of $F_k$).
\end{proof}

By Lemmas~\ref{lemma:GF_A_rat_RC} and \ref{lemma:GF_BCDE_rat_RC}, the poles $t=\bigl(2\cos\bigl(\frac{\pi}{2+k}\bigr)\bigr)^{-1}$ of the series $\sum_{n\geq0} \#_{\cS}\{(0,0)\stackrel{n}{\to} Q_k\}t^n$ have an accumulation point at $\frac{1}{2}$ as $k\to\infty$. Accordingly, one cannot evaluate the series $Q(x,y,t)$ at any point $t>\frac{1}{2}$.


\section{On the nature of the series 
\texorpdfstring{$\boldsymbol{Q(x,y,t)}$}{Q(x,y,t)} for \texorpdfstring{$\boldsymbol{t\in\bigl(0,\frac 12\bigr]}$}{t in (0,1/2)}}
\label{sec:main}

In this section, 
for any fixed step set $\mathcal{S}$ in Table~\ref{tab:list}, 
we explore the nature of the ordinary generating function
$\Q(x,y)\equiv Q_{\cS}(x,y,t)$, for all the values of $t\in\bigl(0,\frac{1}{2}\bigr]$.
As we proved in the previous section, it makes sense to evaluate $t$ from $0$ to $\frac{1}{2}$, but one cannot go further on the real line. 
Recall that to simplify the notation we will omit writing the dependence in $\cS$ and $t$, when the context is clear.
\par
We are going to show that the series $Q(x,y)$ is differentially transcendental over $\Q(x,y)$ 
for $t\in\bigl(0,\frac 12\bigr)$ and that it is strongly transcendental for $t=\frac 12$, in the sense that it is transcendental over the germs of bivariate meromorphic functions at $0$, as formalized in the following definition: 

\begin{definition}
Let $f\in \C[[x]]$ (resp.\ $\C[[x,y]]$). We say that $f$ is \emph{differentially algebraic} over  $\C(x)$ (resp.\ $\C(x,y)$) if there exists an integer $n$ and a non-zero polynomial~$P$
with coefficients in $\C(x)$ (resp.\ $\C(x,y)$) in $n+2$ (resp.\ $n^2+3$) variables such that:
    \[
    P\l(x,f,\dots,\frac{d^nf}{dx^n}\r)=0
    \quad \text{and} \quad
    P\l(x,y,f,\frac{df}{dx},\frac{df}{dy},\dots,\frac{d^{2n}f}{dy^ndx^n}\r)=0.
    \]
We say that $f$ is differentially transcendental over $\C(x)$ (resp.\ $\C(x,y)$) if it is not differentially algebraic.
\par
{\rm Mutatis mutandis}, one defines \emph{strong differential transcendence}, i.e., the differential transcendence of a formal power series over the field of germs of meromorphic functions $\C(\{x\})$
(resp.\ $\C(\{x,y\})$), that is the field of functions that are meromorphic in an open, non-specified neighborhood of $0$.
\end{definition}

The main result of this section is: 

\begin{theorem}\label{thm:main-Galois}
For any model in the list $\mathcal{A}$, $\mathcal{B}$, $\mathcal{C}$, $\mathcal{D}$, $\mathcal{E}$ and
for any $t\in\bigl(0,\frac 1 2\bigr]$, the formal power series
$Q(x,0)$ (resp.\ $Q(0,y)$, $Q(x,y)$)\
is differentially transcendental over $\C(x)$
(resp.\ $\C(y)$, $\C(x,y)$).
For $t=\frac 12$, the power series $Q(x,0)$ (resp.\ $Q(0,y)$, $Q(x,y)$)
is strongly differentially transcendental.
\end{theorem}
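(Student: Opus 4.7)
The plan is to follow the strategy sketched just before the statement: exploit the kernel equation~\eqref{eq:kernel-funct-eq} to derive a telescoping functional equation
\[
F(\sigma(s)) - F(s) = R(s)
\]
for $F(s) := Q(x(s),0)$, where $s \mapsto (x(s),y(s))$ is a rational uniformization of the genus-zero kernel curve $\{K(x,y,t)=0\}$. First I would parametrize this curve and pull back the two natural involutions $\iota_x$, $\iota_y$ of the bi-quadratic kernel (swapping the two $x$-roots, resp.\ the two $y$-roots, of $K=0$) to rational maps on the parameter $s$. Their composition $\sigma := \iota_y \circ \iota_x$ is a homography of $\mathbb{P}^1$. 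Evaluating~\eqref{eq:kernel-funct-eq} at $s$ and at $\sigma(s)$ (in the spirit of \cite{FIM17,DHRS0} but with a parametrization tailored so that it degenerates \emph{only} at $t=\tfrac12$) and subtracting makes the $Q(0,y)$-contribution cancel, producing the displayed relation with an explicit $R \in \C(s)$.

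Second, I would analyze the fixed points of $\sigma$. For every $t \in (0,\tfrac12)$ they are distinct, so after a further rational change of variable $\sigma$ is conjugate to a dilation $s \mapsto q(t)\, s$ with $q(t) \neq 1$ of infinite order, and the telescoper becomes a $q$-difference equation; at the critical value $t = \tfrac12$ the fixed points collide and $\sigma$ is conjugate to the shift $s \mapsto s+1$. In the $q$-difference regime I would argue by contradiction: if $Q(x,0)$ were D-algebraic over $\C(x)$, so would be $F(s)$ over $\C(s)$, and applying a suitable derivation to the telescoping equation (following the Galois-theoretic scheme of \cite{DHRS0}) would yield an auxiliary function satisfying simultaneously a non-trivial linear $q$-difference equation and a non-trivial linear ODE. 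The Ishizaki–Ogawara theorem (Theorem~\ref{thm:Ogawara}) would then force this auxiliary function to be rational, contradicting the explicit pole structure of $R(s)$ along a $\sigma$-orbit. Transcendence of $Q(0,y)$ and of $Q(x,y)$ would then follow directly from~\eqref{eq:kernel-funct-eq}.

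Third, at $t = \tfrac12$ I would run the same scheme but with the shift operator in place of the $q$-dilation, invoking Theorem~\ref{thm:OurTheorem;)} as the shift-analogue distinguishing D-algebraicity over $\C(x)$ from D-algebraicity over the larger field $\C(\{x\})$ of meromorphic germs at $0$. The key input here is that the shift-telescoper has no solution whose $\sigma$-orbit of singularities is compatible with being meromorphic near $0$, so assuming \emph{strong} D-algebraicity of $Q(x,0)$ yields a contradiction exactly as in the $q$-case. The main obstacle, I expect, will lie in two bookkeeping points: \emph{(i)} verifying that the kernel parametrization and the passage to the telescoping equation go through uniformly for every $t \in (0,\tfrac12]$, including algebraic $t$, so that $q(t)$ is never a root of unity and the fixed-points collision happens exactly at $t=\tfrac12$ (this is where we improve on \cite{DHRS0}, which only treats transcendental $t$); and \emph{(ii)} checking that the explicit $R(s)$ produced at each specialization is not $\sigma$-cohomologous to zero, so that Ishizaki--Ogawara (resp.\ Theorem~\ref{thm:OurTheorem;)}) actually yields the desired contradiction rather than a tautology.
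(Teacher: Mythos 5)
Your plan coincides with the paper's proof: the same rational uniformization of the genus-zero kernel, the same telescoping equation $G(\sigma(s))-G(s)=R(s)$ in which $\sigma$ is conjugate to $s\mapsto v^2 s$ (writing $t=v/(1+v^2)$, $v\in(0,1)$) for $t<\tfrac12$ and to a parabolic homography at $t=\tfrac12$, and the same two input theorems (Ishizaki--Ogawara, resp.\ Theorem~\ref{thm:OurTheorem;)}), which reduce everything to showing that $R$ is not a rational coboundary. You have correctly isolated the two decisive verifications --- that $v^2$ is never a root of unity for real $t\in(0,\tfrac12)$ (this is exactly how the paper improves on \cite{DHRS0} for algebraic $t$), and the pole-orbit analysis of $R$ --- but note that the latter is the bulk of the actual argument: it is carried out explicitly, model by model, with five separate parametrizations and, for models $\mathcal{C}$ and $\mathcal{E}$, a resultant computation to certify that the poles lie in distinct $\sigma$-orbits.
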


\begin{remark}
\normalfont
The results of Theorem~\ref{thm:main-Galois} extend those of
in \cite{DHRS0}, where the authors consider only \emph{transcendental} values of $t\in\bigl(0,\frac 1 2\bigr)$. Note that they consider models with weights, whereas we do not. On the other hand, we consider strong differential transcendence, while~\cite{DHRS0} deals with differential transcendence.
\end{remark}

We will treat the five singular models one by one, using difference Galois theory. 
At the end of the section, after the proof, we will comment on the fact that for $t\in\bigl(0,\frac 1 2\bigr)$ one cannot expect to find strongly differential transcendental generating functions. 

\begin{proof}[Strategy of the proof of Theorem~\ref{thm:main-Galois}]
The peculiarity of the five models of Table~\ref{tab:list} is that the kernel curve $K(x,y,t)=0$ has genus zero for any $t\in\bigl(0,\frac{1}{2}\bigr]$, which is equivalent to the existence of a rational parametrization.  We will consider the cases $t\in\bigl(0,\frac{1}{2}\bigr)$ and $t=\frac{1}{2}$ separately, since we did not find a parameterization with a non-trivial specialization for any value of~$t$.
In the proof below, it will be convenient to set $t=\frac{v}{1+v^2}$, with $v\in (0,1]$; this substitution has proven useful in other related contexts, see e.g.\ \cite[Sec.~2.4]{MiRe09}.

For $v\in(0,1)$ we will find $x_0(s),\widetilde{x}_0(s),y_0(s)\in\C(s)$ such that
$(x_0(s),y_0(s))$  and $(\widetilde{x}_0(s),y_0(s))$ are both parametrizations of $K(x,y,v/(v^2+1))=0$ and $\widetilde{x}_0(s)=x_0(v^2\,s)$.
Plugging the two parametrizations in the functional equation~\eqref{eq:kernel-funct-eq}
and taking the difference of the two expressions, we obtain
    \begin{equation}\label{eq:functional-equation}
    \frac{v}{v^2+1} \cdot \left( x_0^2 Q(x_0,0) - \widetilde{x}_{0}^2  Q(\widetilde{x}_{0},0) \right) 
		= (x_0-\widetilde{x}_{0}) y_0.
    \end{equation}
Therefore, as an element of $\C[[s]]$, the formal power series 
\begin{equation}
    \label{eq:def_G0}
    G_0(s):=\frac{v}{v^2+1} \cdot (x_0(s))^2 \cdot Q(x_0(s),0)
\end{equation}
satisfies a $v^2$-difference equation of the form $G_0(v^2\,s)-G_0(s)=R(s)\in\C(s)$.
We prove that such an equation does not have any rational solution and conclude that $G_0(s)$ is differentially transcendental over $\C(s)$, thanks to the following result: 

\begin{theorem}[{\cite{Ishizaki-hypertransc}, \cite[Thm~2]{Ogawara-formalIshizaki}}]
\label{thm:Ogawara}
Let $q\neq 0$ be a complex number, not a root of unity.
If $f\in\C(\!(s)\!)\setminus \C(s)$ satisfies the functional equation
    \[
    f(qs)=Q(s)f(s)+R(s),\quad\hbox{for some $Q(s),R(s)\in\C(s)$, $Q(s)\neq 0$},
    \]
then
$f$ is differentially transcendental over $\C(s)$.
\end{theorem}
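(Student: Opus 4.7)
The plan is to prove the contrapositive: if $f\in\C(\!(s)\!)$ satisfies the functional equation $f(qs)=Q(s)f(s)+R(s)$ and is differentially algebraic over $\C(s)$, then necessarily $f\in\C(s)$. The basic framework is that both operators $\sigma_q\colon g(s)\mapsto g(qs)$ and $\partial=\frac{d}{ds}$ act on $\C(\!(s)\!)$ with the twisted commutation $\sigma_q\partial=q\,\partial\,\sigma_q$. Differentiating the functional equation $k$ times and expanding via Leibniz's rule expresses each $\sigma_q(f^{(k)})$ as an affine combination of $f,f',\dots,f^{(k)}$ with coefficients in $\C(s)$. Hence the differential subfield $K=\C(s)\langle f\rangle_\partial$ generated by $f$ and all its derivatives is $\sigma_q$-stable, and the differential-algebraicity assumption means $\trdeg_{\C(s)}K<\infty$.

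I would then argue by induction on the minimal order $n\geq 0$ of a non-trivial algebraic differential equation $P(s,f,f',\dots,f^{(n)})=0$ with $P\in\C(s)[y_0,\dots,y_n]$ satisfied by $f$. The base case $n=0$ is the algebraic case: any Galois conjugate, trace, and norm of an algebraic $f$ again satisfy an equation of the same affine type with coefficients in $\C(s)$, so the ``trace-free'' part of $f$ solves the homogeneous equation $\sigma_q h=Qh$, and the ratio of two Galois conjugates of $h$ is $\sigma_q$-invariant. Since $q$ is not a root of unity, the only $\sigma_q$-invariant Puiseux series are constants, forcing the Galois action on $h$ to be scalar; a short additional argument on $h^N\in\C(s)$ then yields $f\in\C(s)$. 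For the inductive step, apply $\sigma_q$ to the minimal equation: using the explicit affine formulas for $\sigma_q(f^{(k)})$, this yields a second relation $\widetilde P(s,f,\dots,f^{(n)})=0$ of the same order. Taking the resultant with respect to $y_n$ between $P$ and $\widetilde P$ produces either a strictly lower-order relation, contradicting minimality, or vanishes identically, in which case $P$ and $\widetilde P$ are proportional. The proportionality factor lies in $\C(s,f,\dots,f^{(n-1)})$ and satisfies a first-order q-difference equation whose solutions are again controlled by the $\sigma_q$-invariance argument, allowing one to descend to an order-$(n-1)$ differential relation and close the induction.

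The main obstacle is the proportionality case. One has to propagate the twist $\sigma_q\partial=q\,\partial\,\sigma_q$ through the polynomial identities expressing $\sigma_q(f^{(k)})$, and then extract from the ratio $\widetilde P/P$ a usable lower-order algebraic differential relation without introducing ``constants of integration'' outside of $\C(s)$. This delicate bookkeeping is exactly what is carried out in Ogawara's formal treatment \cite{Ogawara-formalIshizaki}, refining Ishizaki's original analytic proof \cite{Ishizaki-hypertransc} into the formal-series category. Once the induction terminates, the base-case lemma forces $f\in\C(s)$, contradicting $f\in\C(\!(s)\!)\setminus\C(s)$ and completing the proof.
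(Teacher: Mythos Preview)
The paper does not prove Theorem~\ref{thm:Ogawara}; it is quoted from the literature (Ishizaki's analytic version and Ogawara's formal version) and used only as a black box in the proof of Theorem~\ref{thm:main-Galois}. So there is no ``paper's own proof'' to compare your proposal to.

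That said, your outline is indeed a fair summary of the strategy in the cited references: exploit that $\sigma_q$ preserves the differential field $\C(s)\langle f\rangle_\partial$, take a minimal-order algebraic differential relation, transport it by $\sigma_q$, and force a contradiction via a resultant argument unless $f$ is rational. Two small remarks. First, your commutation relation is written the wrong way round: differentiating $g(qs)$ gives $q\,g'(qs)$, so $\partial\sigma_q=q\,\sigma_q\partial$, not $\sigma_q\partial=q\,\partial\sigma_q$; this does not affect the architecture of the argument but should be fixed. Second, your base case ($f$ algebraic over $\C(s)$) and the ``proportionality'' branch of the inductive step are precisely where all the work lies, and you essentially defer both to \cite{Ogawara-formalIshizaki}. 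That is honest, but it means your write-up is a proof \emph{sketch} pointing to the references rather than a self-contained argument. If the intent was to reproduce the proof, you would need to flesh out (i) why an algebraic Laurent series satisfying a first-order linear $q$-difference equation over $\C(s)$ must already lie in $\C(s)$ when $q$ is not a root of unity, and (ii) how, in the proportionality case, one actually extracts a strictly lower-order differential relation.
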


Since $x_0(s)\in\C(s)$ for any $v\in (0,1)$, $G_0(s)$ is  differentially transcendental over $\C(s)$ if and only if
$Q(x,0)$ is  differentially transcendental over $\C(x)$.
Moreover, from \eqref{eq:kernel-funct-eq},
the differential transcendence of $Q(x,0)$ over $\C(x)$ and that of $Q(0,y)$ over $\C(y)$ are equivalent, since:
    \[
    x_0(s)y_0(s)-tx_0(s)^2Q(x_0(s),0)=ty_0(s)^2Q(0,y_0(s))\,.
    \]
Finally, \eqref{eq:kernel-funct-eq} allows to conclude that $Q(x,y)$ is differentially transcendental over $\C(x,y)$, using that $x$ and $y$ are algebraically independent.

Let us now turn to the case $v=1$, for which we shall reason in the same way. We find $x_1(s),\widetilde{x}_1(s), y_1(s)\in\C(s)$ such that
$(x_1(s),y_1(s))$  and $(\widetilde{x}_1(s),y_1(s))$ are both parametrizations of $K(x,y,\frac{1}{2})=0$ and $\widetilde{x}_1(s)=x_1(\tau(s))$,
where $\tau$ is a homography with a single fixed point.
Therefore, the formal power series 
\begin{equation}
    \label{eq:G1}
    G_1(s):=\frac{1}{2} \cdot (x_1(s))^2 \cdot Q\bigl(x_1(s),0,\tfrac1 2\bigr)
\end{equation}
satisfies a 
difference equation of the form $G_1(\tau(s))-G_1(s)=R(s)\in\C(s)$.
We prove that such an equation does not have rational solutions and conclude, as for $v\in(0,1)$,
thanks to the following result.

\begin{theorem}[{\cite[Thm~34 and Cor.~35]{bostan-divizio-raschel-BELL}}]
\label{thm:OurTheorem;)}
Let $\tau$ be an automorphism of $\C(s)$, having $0$ as unique
fixed point. If $G(s)\in\C((s))\setminus\C(s)$ satisfies a functional
equation of the form
    \[\tau(G(s))=Q(s)G(s)+R(s),
    \quad\hbox{for some $Q(s),R(s)\in\C(s)$, $Q(s)\neq 0$,}\]
then $G(s)$ is differentially transcendental over the field
$\C(\{s\})$
of germs of meromorphic functions at $0$.
\end{theorem}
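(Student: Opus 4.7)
My plan is to normalize $\tau$ to a canonical parabolic form, conjugate to an additive shift at infinity, and then derive a contradiction from the minimal algebraic differential equation satisfied by $G$ via a Hölder-style rigidity argument.

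First I would normalize the homography. Every $\C$-algebra automorphism of $\C(s)$ is a Möbius transformation; the assumption that $\tau$ has $0$ as its unique fixed point in $\mathbb{P}^1(\C)$ forces the parabolic form $\tau(s) = s/(1 + \alpha s)$ for some $\alpha \in \C^*$. The rescaling $s \mapsto s/\alpha$ is an automorphism of $\C(s)$ preserving $\C(\{s\})$ and the property of differential transcendence, so without loss of generality $\tau(s) = s/(1+s)$. Next, conjugating by $w = 1/s$, the parabolic $\tau$ becomes the additive shift $\sigma : w \mapsto w+1$; the field $\C(\{s\})$ of germs at $s=0$ is identified isomorphically with the field $\cM_\infty$ of germs of meromorphic functions at $w = \infty$; and D-algebraicity is preserved, since $d/ds = -w^2\, d/dw$. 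Setting $\widetilde G(w) := G(1/w) \in \C(\!(1/w)\!) \setminus \C(w)$ and $\widetilde Q, \widetilde R \in \C(w)$ analogously, the equation becomes $\sigma(\widetilde G) = \widetilde Q\, \widetilde G + \widetilde R$.

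Then I would argue by contradiction. Suppose $\widetilde G$ is D-algebraic over $\cM_\infty$ and let $P(w, y_0, \ldots, y_n) \in \cM_\infty[y_0, \ldots, y_n]$ be a nonzero polynomial of minimal positive order $n$ with $P(w, \widetilde G, \widetilde G', \ldots, \widetilde G^{(n)}) = 0$. Since $\sigma$ commutes with $d/dw$, applying $\sigma$ yields $P^\sigma(w+1, \sigma\widetilde G, \ldots, \sigma\widetilde G^{(n)}) = 0$, where $P^\sigma$ denotes the polynomial obtained by applying $\sigma$ to the coefficients of $P$. Using $\sigma \widetilde G = \widetilde Q\, \widetilde G + \widetilde R$ and differentiating $k$ times, each $\sigma\widetilde G^{(k)}$ is a polynomial in $\widetilde G, \ldots, \widetilde G^{(k)}$ with coefficients in $\C(w)$. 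Substituting and clearing denominators produces a second differential polynomial $\widetilde P$ of order at most $n$ vanishing on $\widetilde G$. By minimality of $n$, $\widetilde P$ is an $\cM_\infty$-multiple of $P$, yielding first-order $\sigma$-linear relations among the coefficients of $P$. The rigidity input is that the orbits $\{w+n : n \in \Z\}$ of $\sigma$ diverge to $\infty$, so no nonzero germ in $\cM_\infty$ is $\sigma$-periodic, and the $\sigma$-invariants of $\cM_\infty$ are exactly $\C$. Propagating this through the system on the coefficients of $P$, one shows that $P$ must degenerate to a first-order linear differential relation whose solutions in $\cM_\infty$ compatible with $\sigma(\widetilde G) = \widetilde Q\, \widetilde G + \widetilde R$ force $\widetilde G \in \C(w)$ — contradicting the hypothesis. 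Undoing the conjugation yields $G \in \C(s)$, contradicting $G \notin \C(s)$.

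The main obstacle is this last propagation step. Translating the abstract proportionality $\widetilde P = A \cdot P$ into a concrete obstruction on $\widetilde G$ requires careful bookkeeping: one must order the monomials of $P$ in $y_0, \ldots, y_n$, track how the leading coefficient and the multiplicities transform under $\sigma$, and repeatedly invoke the absence of nontrivial $\sigma$-periodic germs at infinity. It is precisely the confluence-of-fixed-points phenomenon — two distinct fixed points of a $q$-dilation merging into a single parabolic fixed point — that distinguishes this case from the Ishizaki-Ogawara setting (\cref{thm:Ogawara}), where the standard Galois-theoretic arguments for $q$-difference equations apply directly; the parabolic case demands the finer shift-rigidity analysis sketched above.
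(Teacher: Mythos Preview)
The paper does not prove this theorem. It is quoted verbatim from the authors' earlier work \cite{bostan-divizio-raschel-BELL} (Theorem~34 and Corollary~35 there) and used as a black box in the proof of Theorem~\ref{thm:main-Galois}. There is therefore no ``paper's own proof'' to compare your proposal against; the proof lives in the cited reference, whose title (``a Galois theoretic approach'') indicates a parametrized difference-Galois argument rather than the direct H\"older-style manipulation you outline.

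On the merits of your sketch: the normalization steps are fine and standard. The reduction of a parabolic $\tau$ to $s\mapsto s/(1+s)$ and then, via $w=1/s$, to the additive shift $w\mapsto w+1$, together with the identification of $\C(\{s\})$ with germs of meromorphic functions at $w=\infty$, is correct and is indeed the natural setup. The observation that the $\sigma$-constants of $\cM_\infty$ are exactly $\C$ is the right rigidity input.

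Where your argument has a genuine gap is precisely where you flag it. The sentence ``By minimality of $n$, $\widetilde P$ is an $\cM_\infty$-multiple of $P$'' is not justified: minimality of the \emph{order} $n$ does not by itself force two order-$n$ annihilating differential polynomials to be proportional. One normally also fixes a minimal total degree (or takes $P$ irreducible) and argues via a resultant or a division with remainder in the differential-polynomial ring; without that, $\widetilde P$ could simply be an independent relation of the same order. More seriously, the step ``one shows that $P$ must degenerate to a first-order linear differential relation'' is asserted, not argued: even in H\"older's original proof for $\Gamma$, the passage from the proportionality $\widetilde P = A\cdot P$ to an actual contradiction is the delicate part and is highly sensitive to the specific shape of $\widetilde Q$ and $\widetilde R$. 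For a general inhomogeneous first-order $\sigma$-equation this propagation is exactly what the Hardouin--Singer machinery (which underlies the Galois-theoretic proof in \cite{bostan-divizio-raschel-BELL}) is designed to handle systematically. Your outline is a reasonable heuristic for why the result should hold, but as written it is not a proof; the cited reference supplies the missing structure.
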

As in the previous case, one can recover the information on $Q(x,0)$, $Q(0,y)$ and $Q(x,y)$ from 
the strongly differential transcendence of $G(s)$.
\end{proof}

We will now detail the proof of the above theorem for the five genus-zero models, one by one. 
Model $\mathcal{A}$ is a bit easier than the others. 
Then we consider models $\mathcal{B}$ and $\mathcal{D}$, which are essentially similar.
Finally we look at models $\mathcal{C}$ and $\mathcal{E}$, which are a bit more complicated.


\begin{proof}[Proof of Theorem~\ref{thm:main-Galois} in the case of model \texorpdfstring{$\cA$}{A}]
In this case, for $v\in(0,1)$, we have:
    \begin{align}
        K(x,y) &=xy-t \left({y}^{2}+{x}^{2}{y}^{2}+{x}^{2}\right);\nonumber\\
        \displaystyle x_0(s) &=\frac{\left( 1-v^2 \right)s}{ v\left( {s}^{2}+1 \right)}\quad\text{and}\quad
            y_0(s) = {\frac { \left( 1-v^2 \right)   s}{{v}^{2}{s}^{2}+1}};
            \label{eq:parametrization_A}\\
        \displaystyle  \widetilde{x}_{0}(s) &= {\frac { \left( 1 - {v}^{2} \right) v \, s}{ {v}^{4}s^2 + 1 }},\quad\hbox{ with $\wtilde x_0(s)=x_0(v^2 s)$.}\nonumber
    \end{align}
Therefore, the formal power series $G_0(s)$
in \eqref{eq:def_G0} satisfies the $v^2$-difference equation:
    \begin{align*}
        G_0(v^2s)-G_0(s) & = 
        \displaystyle\frac{({v}^{2}-1)^3}{v}
        {\frac {{s}^{2} (v^2s^2-1)}
        {\left( {s}^{2}+1 \right)  \left( {v}^{4}{s}^{2}+1 \right)
        \left( {v}^{2}{s}^{2}+1 \right) }}\\
         & =
        \displaystyle\frac{({v}^{2}-1)}{v}
        \l({\frac {1}{{s}^{2}+1}}-{\frac{2}{{v}^{2}{s}^{2}+1}}
         +{\frac{1}{{v}^{4}{s}^{2}+1}}\r).
    \end{align*}
By Theorems~\ref{thm:Ogawara} and \ref{thm:OurTheorem;)}, we only need to prove that, for any $v\in (0,1)$,
the power series
$G_0(s)$ is not rational.
We set
$\widetilde G_0(s)=\frac{v}{2(v^2-1)}G_0(s)-\frac{1}{2(s^2+1)}$, so that
    \begin{equation}
    \label{eq:q-diff-ModelA-bis}
        \widetilde G_0(v^2s)-\widetilde G_0(s)=\frac{1}{{s}^{2}+1}-\frac{1}{{v}^{2}{s}^{2}+1}.
    \end{equation}
Of course, $G_0(s)$ is rational if and only if $\widetilde{G}_{0}(s)$ is rational.
Let us suppose by contradiction that~\eqref{eq:q-diff-ModelA-bis} has a rational solution $\widetilde G_0\in\C(s)$.
If $\widetilde G_0\in\C(s)$ then the left-hand side of \eqref{eq:q-diff-ModelA-bis} would have
at least two poles whose quotient is in $v^{2\Z}$.
The poles of the right-hand site of \eqref{eq:q-diff-ModelA-bis}
are $\pm i,\pm i/v$.
If the quotient of two of them is in $v^{2\Z}$, then $v$ is a root of unity.
Since $v\in(0,1)$, we have obtained a contradiction and therefore we can conclude
that $G_0(s)$ is differentially transcendental over $\C(s)$.

Now we consider the case $t=\frac{1}{2}$, or equivalently the case $v=1$.
For such value of $v$ the parametrization \eqref{eq:parametrization_A} degenerates,
however the kernel curve becomes reducible:
        \[
        xy - \frac12 \left(x^2+y^2+x^2y^2\right)=
        \frac12 \left( ix-iy+xy \right)  \left( ix-iy-xy \right).
        \]
The first factor provides the parametrization $\l(x,y(x):=\frac{x}{1+ix}\r)$ of the kernel curve.
Notice that the diagonal symmetry of model $\cA$ implies that $Q(x,0) = Q(0,x)$, therefore
we deduce from~\eqref{eq:kernel-funct-eq}
that $G_1(x)$
in \eqref{eq:G1} satisfies the functional equation:
    \begin{equation}\label{eq:ModA1/2}
    G_1\l(\tau(x)\r)=-G_1(x)+\frac{x^2}{1+ix},\quad\hbox{with } \tau(x)=\frac{x}{1+ix}.
    \end{equation}
As above, we need to prove that $G_1(x)$ cannot be rational.
The inhomogeneous term $\frac{x^2}{1+ix}$ in \eqref{eq:ModA1/2} has a single pole,
which is not a fixed point for $x\mapsto \frac{x}{1+ix}$. As a consequence,
if $G_1(x)$ was rational, $G_1\l(\frac{x}{1+ix}\r)-G_1(x)$ would have at least two poles in its $\tau$-orbit.
We have obtained a contradiction, hence $G_1(x)$ is not rational. 
\end{proof}


\begin{proof}[Proof of Theorem~\ref{thm:main-Galois} in the case of model \texorpdfstring{$\cB$}{B}]
We have:
    \begin{align}
          K(x,y) &= xy - t\left({y}^{2}+x{y}^{2}+{x}^{2}y+{x}^{2} \right)\,;\nonumber\\
          \displaystyle x_0(s)&=-\frac{\left(v +1\right) \left(1-v \right)^{2} s}{v \left(s -1\right) \left(v s -1\right)}\quad \text{and}\quad
            y_0(s)=-\frac{\left(v +1\right) \left(1-v \right)^{2} s}{\left(s \,v^{2}-1\right) \left(v s -1\right)}\,;
            \label{eq:ParaBdhrs}\\
      \displaystyle \widetilde x_0(s)&=-\frac{v s \left(v +1\right) \left(1-v \right)^{2}}{\left(s \,v^{3}-1\right) \left(s \,v^{2}-1\right)}\,, \quad
            \hbox{with $\widetilde x_0(s)=x_0(v^2\,s)$.}\nonumber
    \end{align}
Therefore, we deduce from \eqref{eq:functional-equation} that the formal
power series $G_0(s)$
in \eqref{eq:def_G0} satisfies the $v^2$-difference equation:
    \begin{multline*}
     G_0(s)-G_0(v^2\,s)=\frac{-v^{4}-v^{3}+2 v^{2}+v -1}{\left(v s -1\right) v^{2}}+\frac{v^{2}-1}{\left(s -1\right) v}\\
    +\frac{v^{2}-1}{\left(s \,v^{3}-1\right) v}+\frac{-v^{4}+2 v^{2}-1}{\left(v s -1\right)^{2} v^{2}}+\frac{v^{4}-2 v^{2}+1}{\left(s \,v^{2}-1\right)^{2} v^{2}}+\frac{v^{4}-v^{3}-2 v^{2}+v +1}{\left(s \,v^{2}-1\right) v^{2}}\,.
    \end{multline*}
If we set
    \[
    \widetilde{G}_0:=G_0
    +\frac{v^{4}-2 v^{2}+1}{\left(s -1\right)^{2} v^{2}}+\frac{v^{4}-v^{3}-2 v^{2}+v +1}{\left(s -1\right) v^{2}}+\frac{v^{2}-1}{\left(v s -1\right) v}\,,
    \]
then the functional equation becomes
    \begin{multline*}
    \widetilde G_0(s)-\widetilde G_0(v^2\,s)=\frac{-v^{4}-v^{3}+2 v^{2}+v -1}{\left(v s -1\right) v^{2}}+\frac{v^{2}-1}{\left(s -1\right) v}\\
    +\frac{-v^{4}+2 v^{2}-1}{\left(v s -1\right)^{2} v^{2}}+\frac{v^{2}-1}{\left(v s -1\right) v}+\frac{v^{4}-2 v^{2}+1}{\left(s -1\right)^{2} v^{2}}+\frac{v^{4}-v^{3}-2 v^{2}+v +1}{\left(s -1\right) v^{2}}.
    \end{multline*}
For any $v\in(0,1)$, the right-hand side of the equation has only one pole per $v^2$-orbit, hence the functional equation has no rational solution.
\par
For $v=1$, a parametrization of $K(x,y,\frac12)$  is given by:
    \begin{equation}\label{eq:paraBv=1}
    x_1(s)=-\frac{s^{2}}{s +2}\quad\text{and}\quad
    y_1(s)=-\frac{s^{2}}{\left(s +1\right) \left(s +2\right)}.
    \end{equation}
Moreover $\widetilde{x}_1(s):=   -\frac{s^{2}}{(s+1)(3s+2)}$ and $\widetilde x_1(s)=x_1\l({\frac {s}{s+1}}\r)$.
Therefore, the formal power series
$G_1(s)$ in \eqref{eq:G1}
satisfies the
difference equation:
    \[
        G_1(s)-G_1\l({\frac {s}{s+1}}\r)
        =s-\frac{16}{3}-\frac{16}{\left(s +2\right)^{2}}-\frac{4}{3 \left(3 s +2\right)}+\frac{20}{s +2}+\frac{1}{\left(s +1\right)^{2}}-\frac{1}{s +1}\,.
    \]
If we set $\widetilde G_1(s)=G_1(s)+\frac{16}{\left(s +2\right)^{2}}-\frac{20}{s +2}$,
then
    \[
    \widetilde G_1(s)-\widetilde G_1\l(\frac {s}{s+1}\r)=
    s -\frac{4}{9}+\frac{1}{\left(s +1\right)^{2}}+\frac{16}{9 \left(3 s +2\right)}-\frac{16}{9 \left(3 s +2\right)^{2}}-\frac{1}{s +1}.
    \]
One verifies that the iterates of $1/(s+1)$ and of $1/(3s+2)$ with respect to the homography $s\mapsto \frac{s}{s+1}$
are respectively:
    \[
    \left\{\frac{ns+1}{(n+1)s+1}\,,~n\geq 0\right\}\quad\hbox{and}\quad
    \left\{\frac{ns+1}{(2n+3)s+2}\,,~n\geq 0\right\},
    \]
which implies that the poles of the right-hand side of the equation above
are not on the same  orbit, and hence
that $G$ cannot be a rational function.
\end{proof}

\begin{proof}[Proof of Theorem~\ref{thm:main-Galois} in the case of model \texorpdfstring{$\cD$}{D}]
We have:
    \begin{align}
        K(x,y)&=xy-t\l(y^2+xy^2+x^2\r)\,;\nonumber\\
      \displaystyle      x_0(s)&=\frac{s \left(v -1\right)^{2} \left(v +1\right)^{2}}{v^{2} \left(s +1\right)^{2}}\quad \text{and}\quad 
            y_0(s)=\frac{s \left(v -1\right)^{2} \left(v +1\right)^{2}}{v \left(s +1\right) \left(s \,v^{2}+1\right)}\,;
            \label{eq:ParaDdhrs}\\
      \displaystyle   \widetilde x_0(s)&=\frac{s \left(v -1\right)^{2} \left(v +1\right)^{2}}{\left(s \,v^{2}+1\right)^{2}}\,,\quad 
            \hbox{with $\widetilde x_0(s)=x_0(v^2\,s)$.}\nonumber
    \end{align}
Therefore, we deduce from \eqref{eq:functional-equation} that the formal
power series $G_0(s)$ in \eqref{eq:def_G0} satisfies the $v^2$-difference equation:
    \begin{multline*}
     G_0(s)-G_0(v^2\,s)=
     \frac{-v^{6}+3 v^{4}-3 v^{2}+1}{\left(s +1\right)^{3} v^{3}}
     +\frac{v^{4}-2 v^{2}+1}{\left(s +1\right) v^{3}}
     +\frac{v^{6}-4 v^{4}+5 v^{2}-2}{\left(s +1\right)^{2} v^{3}}\\
     +\frac{-v^{4}+2 v^{2}-1}{\left(s \,v^{2}+1\right) v}
     +\frac{-v^{6}+3 v^{4}-3 v^{2}+1}{\left(s \,v^{2}+1\right)^{3} v^{3}}
     +\frac{2 v^{6}-5 v^{4}+4 v^{2}-1}{\left(s \,v^{2}+1\right)^{2} v^{3}}.
    \end{multline*}
If we set
    \[
    \widetilde{G}_0:=G_0
    -\frac{-v^{6}+3 v^{4}-3 v^{2}+1}{\left(s +1\right)^{3} v^{3}}
    -\frac{v^{4}-2 v^{2}+1}{\left(s +1\right) v^{3}}
    -\frac{v^{6}-4 v^{4}+5 v^{2}-2}{\left(s +1\right)^{2} v^{3}}\,,
    \]
then the functional equation becomes
    \begin{multline*}
    \widetilde G_0(s)-\widetilde G_0(v^2\,s)=\frac{-v^{6}+3 v^{4}-3 v^{2}+1}{\left(s \,v^{2}+1\right) v^{3}}+\frac{-2 v^{6}+6 v^{4}-6 v^{2}+2}{\left(s \,v^{2}+1\right)^{3} v^{3}}+\frac{3 v^{6}-9 v^{4}+9 v^{2}-3}{\left(s \,v^{2}+1\right)^{2} v^{3}}
.
    \end{multline*}
For any $v\in(0,1)$, the right-hand side of the equation has only one pole per $v^2$-orbit, hence the functional equation has no rational solution.
\par
For $v=1$, $K(x,y,\frac12)=0$ can be parameterized by:
    \[
    x_1(s)=-\frac{4 s^{2}}{\left(s -1\right)^{2}}\quad \text{and}\quad 
    y_1(s)=\frac{4 s^{2}}{\left(s +1\right) \left(s -1\right)}.
    \]
Moreover $\widetilde{x}_1(s)= -\frac{4 s^{2}}{\left(s +1\right)^{2}}=x_1\l(\frac{s}{2s+1}\r)$.
Therefore, the formal power series
$G_1(s)$ in \eqref{eq:G1} satisfies the
difference equation:
    \[
        G_1(s)-G_1\l({\frac {s}{2s+1}}\r)
        =-\frac{8}{\left(1 +s\right)^{3}}+\frac{8}{\left(1 -s\right)^{3}}+\frac{28}{\left(1+s\right)^{2}}-\frac{28}{\left(1-s\right)^{2}}-\frac{32}{1+s}+\frac{32}{1-s}.
    \]
If we set $\widetilde G_1(s)=G_1(s)+\frac{8}{\left(s -1\right)^{3}}+\frac{28}{\left(s -1\right)^{2}}+\frac{32}{s -1}$,
then
    \[
    \widetilde G_1(s)-\widetilde G_1\l(\frac {s}{2s+1}\r)=
    16-\frac{16}{\left(s +1\right)^{3}}+\frac{48}{\left(s +1\right)^{2}}-\frac{48}{s +1}.
    \]
Since the right-hand side has only one pole,
$\widetilde G_1$ and $G_1$ cannot be rational functions.
\end{proof}

\begin{proof}[Proof of Theorem~\ref{thm:main-Galois} in the case of model \texorpdfstring{$\cC$}{C}]
The data of this model are:
    \begin{align}
      K(x,y)&=xy-t\l(y^2+xy^2+x^2y^2+x^2y+x^2\r)\,;\nonumber\\
        \displaystyle      \label{eq:ParaCmaple}
            x_0(s)&=\frac{\left(v +1\right) \left(v -1\right)^{2} s}{\left(s^{2} +(v+1)s + v^2 -v+1\right) v}\quad \text{and}\quad 
            y_0(s)=\frac{\left(v +1\right) \left(v -1\right)^{2} s}{v^2s^2+(v^2+v)s+v^2-v+1}\,;\\
        \displaystyle   \widetilde x_0(s)&=\frac{v s \left(v +1\right) \left(v -1\right)^{2}}{v^{4}s^{2} +(v^{3}+v^2)s+ v^{2}-v+1}\,,
            \quad\hbox{with $\widetilde x_0(s)=x_0(v^2\,s)$.}\nonumber
    \end{align}
As in the previous cases, we find the functional equation:
    \begin{multline*}
     G_0(s)-G_0(v^2\,s)
     =\frac{s \,v^{5}-s \,v^{3}+v^{4}-v^{3}+v -1}{\left(s^{2} v^{4}+s \,v^{3}+v^{2} s +v^{2}-v +1\right) v}\\
     +\frac{-s \,v^{4}-s \,v^{3}-2 v^{4}+v^{2} s +2 v^{3}+v s -2 v +2}{\left(s^{2} v^{2}+v^{2} s +v s +v^{2}-v +1\right) v}+\frac{v^{4}+v^{2} s -v^{3}-s +v -1}{\left(s^{2}+v s +v^{2}+s -v +1\right) v}\,.
    \end{multline*}
If we set $\widetilde{G}_0(s)=G_0(s)-\frac{v^{4}+v^{2} s -v^{3}-s +v -1}{\left(s^{2}+v s +v^{2}+s -v +1\right) v}$, then
we obtain the functional equation:
    \begin{equation*}
     \widetilde{G}_0(s)-\widetilde{G}_0(v^2\,s)
     =r(s)-r(vs)\,,\quad\hbox{where }
     r(s)=\frac{-s \,v^{4}-s \,v^{3}-2 v^{4}+v^{2} s +2 v^{3}+v s -2 v +2}{\left(s^{2} v^{2}+v^{2} s +v s +v^{2}-v +1\right)v}.
    \end{equation*}
To conclude it is enough to check that the denominator $p(s):=s^{2} v^{2}+v^{2} s +v s +v^{2}-v +1$
of $r(s)$ has the property that $p(s)$ and $p(v^{2n+1}\,s)$
have no common roots, for any $v\in(0,1)$ and $n\in\Z$. Let us introduce a parameter $a$ and calculate the resultant of
$p(s)$ and $p(v\cdot a\cdot s)$ with respect to $s$:
    \[
    v^{8} \left(v^{2}-v +1\right)\left(a v -1\right)^{2}  \left(a^{2} v^{4}-a^{2} v^{3}+a^{2} v^{2}+a \,v^{3}-4 a \,v^{2}+a v +v^{2}-v +1\right).
    \]
The polynomial above has an obvious double root $a=v^{-1}$ and vanishes
at the following values of $a$:
$\frac{i\sqrt{3}\, v^{2}-i\sqrt{3}-v^{2}+4 v -1}
{2 \left(v^{2}-v +1\right) v}$ and
$-\frac{i\sqrt{3}\, v^{2}-i\sqrt{3}+v^{2}-4 v +1}
{2 \left(v^{2}-v +1\right) v}$. We need to prove that there is no $v\in(0,1)$ and no $n\in\Z$ such that one of the latter is equal to $v^{2n}$.
For the first root, suppose by contradiction that
    \[
    \frac{i\sqrt{3}\, v^{2}-i\sqrt{3}-v^{2}+4 v -1}{2 \left(v^{2}-v +1\right) v}=v^{2n}.
    \]
This is equivalent to say that for $v\in(0,1)$, we have $i\sqrt{3}(v^2-1)=2\,v^{2n+1}\left(v^{2}-v +1\right)+v^{2}-4 v +1$,
which cannot happen because we have a non-zero purely imaginary number of the left-hand side and a real number on the right-hand side.
\par
For $v=1$, the kernel curve defined by $xy-\frac{1}{2}\l(y^2+xy^2+x^2y^2+x^2y+x^2\r)=0$ is parameterized by:
    \begin{equation}\label{eq:paraDv=1}
    x_1(s)=\frac{- s^{2}}{ \frac{s^{2}}{2} - s + 2} \quad \text{and}\quad
    y_1(s)=\frac{- s^{2}}{ \frac{s^{2}}{2} + s + 2}\,.
    \end{equation}
We have $\displaystyle{\widetilde{x}_1(s) = \frac{- s^{2}}{\frac32 s^{2} + 3 s + 2}}$, so that
$\widetilde{x}_1(s)=x_1\l(\frac{s}{1+s}\r)$.
Therefore, the formal power series $G_1(s)$
in \eqref{eq:G1} satisfies the
difference equation:
    \[
     G_1(s)-G_1\l({\frac {s}{s+1}}\r)= \frac{8}{3} 
    +\frac{4 (3 s +4)}{3(3 s^2 + 6 s + 4)}
    -\frac{8}{s^2 - 2 s + 4}
    +\frac{4 (s-2)}{s^2 + 2 s + 4}.
    \]
We need to prove that this functional equation has no rational function solutions.
If we set $\widetilde{s}=1/s$ then we obtain:
    \[
     G_1(\widetilde s)-G_1\l(\widetilde s+1\r)= 
    -\frac{4 (\widetilde s + 1)}{4 \widetilde s^2 + 6 \widetilde s + 3}
    -\frac{2 (2 \widetilde s - 1)}{4 \widetilde s^2 - 2 \widetilde s + 1}
    +\frac{2 (4 \widetilde s + 1)}{4 \widetilde s^2 + 2 \widetilde s + 1}.
    \]
We define $\widetilde{G}_1(\widetilde s)
=
G_1(\widetilde s)
+ (2 (2 \widetilde s - 1))/(4 \widetilde s^2 - 2 \widetilde s + 1)$, so that:
    \[
     \widetilde{G}_1(\widetilde s)-\widetilde{G}_1\l(\widetilde s+1\r)= 
    \frac{2 (4 \widetilde s + 1)}{4 \widetilde s^2 + 2 \widetilde s + 1}
    -\frac{2 (4 \widetilde s + 3)}{4 \widetilde s^2 + 6 \widetilde s + 3} .
    \]
The poles of the denominator of the right-hand side are
    \[
    - \frac{3}{4}+\frac{i \sqrt{3}}{4}\,,~
    - \frac{3}{4}-\frac{i \sqrt{3}}{4}\,,~
    - \frac{1}{4}+\frac{i \sqrt{3}}{4}\,,~
    - \frac{1}{4}-\frac{i \sqrt{3}}{4}\,,~
    \]
Since any pair of them does not differ by an integer, the functional equation above cannot have a rational solution.
We conclude that $G_1(s)$ is differentially transcendental over $\C(\{s\})$.
\end{proof}

\begin{proof}[Proof of Theorem~\ref{thm:main-Galois} in the case of model \texorpdfstring{$\cE$}{E}]
We have
    \begin{align}
          K(x,y)&=xy-t(y^2+x^2y^2+xy^2+x^2)\,;\nonumber\\    \label{eq:ParaEmaple}
            x_0(s)&=\frac{s \left(v -1\right)^{2} \left(v +1\right)^{2}}{v \left(s^{2}+2 v s+v^{4} -v^{2}+1\right)}\quad\text{and}\quad
             y_0(s)=\frac{s \left(v -1\right)^{2} \left(v +1\right)^{2}}{v^2s^2+(v^3+v)s+v^{4} -v^{2}+1}\,;\\
        \displaystyle   \widetilde x_0(s)&=\frac{v s \left(v -1\right)^{2} \left(v +1\right)^{2}}{v^4s^2  +2v^{3} s +v^4-v^{2}+1}\,,\quad
            \hbox{with } \widetilde x_0(s)=x_0(v^2\,s)\,.\nonumber
    \end{align}
As in the previous cases, we find the functional equation:
    \begin{multline*}
     G_0(s)-G_0(v^2\,s)
     =\frac{v^{6}+s \,v^{3}-2 v^{4}-v s +2 v^{2}-1}{\left(v^{4}+s^{2}+2 v s -v^{2}+1\right) v}\\
     +\frac{-s \,v^{5}-2 v^{6}+4 v^{4}+v s -4 v^{2}+2}{\left(s^{2} v^{2}+s \,v^{3}+v^{4}+v s -v^{2}+1\right) v}+\frac{s \,v^{5}+v^{6}-s \,v^{3}-2 v^{4}+2 v^{2}-1}{\left(s^{2} v^{4}+2 s \,v^{3}+v^{4}-v^{2}+1\right) v}\,.
    \end{multline*}
If we set $\widetilde{G}_0(s)=G_0(s)-\frac{v^{6}+s \,v^{3}-2 v^{4}-v s +2 v^{2}-1}{\left(v^{4}+s^{2}+2 v s -v^{2}+1\right) v}$, 
we obtain the functional equation:
    \begin{multline*}
     \widetilde{G}_0(s)-\widetilde{G}_0(v^2\,s)
     =\frac{2 s \,v^{5}+2 v^{6}-2 s \,v^{3}-4 v^{4}+4 v^{2}-2}{\left(s^{2} v^{4}+2 s \,v^{3}+v^{4}-v^{2}+1\right) v}+\frac{-s \,v^{5}-2 v^{6}+4 v^{4}+v s -4 v^{2}+2}{\left(s^{2} v^{2}+s \,v^{3}+v^{4}+v s -v^{2}+1\right) v}.
    \end{multline*}
To conclude, we need to prove that the roots of the denominator of the right-hand side have distinct $v^2$-orbits, for any $v\in(0,1)$.
The roots are:
\[
    \alpha_1=\tfrac{(v^2-1)i-v}{v^2}\,,~
    \alpha_2=\tfrac{(1-v^2)i-v}{v^2}\,,~\alpha_3=\tfrac{\sqrt{3}(v^2-1)i-(v^2+1)}{2v}\,,~
    \alpha_4= \tfrac{\sqrt{3}(1-v^2)i-(v^2+1)}{2v}\,.
\]
Let us suppose by contradiction that there exist $n\in\Z$ and $v\in(0,1)$, such that $\alpha_1=v^{2n}\,\alpha_3$, i.e.,
$2(v^2-1)i-2v^2=v^{2n+1}\bigl(\sqrt{3}(v^2-1)i-(v^2+1)\bigr)$. Since $v$ is real we must have
$(\sqrt{3}v^{2n+1}-2)(v^2-1)=0$, which is impossible for any $v\in(0,1)$.
Therefore the $v^2$-orbits of $\alpha_1$ and $\alpha_3$ are disjoint for any $v\in(0,1)$.
One can reason in the same way for all the other 5 possible quotients $\alpha_i/\alpha_j$, with $4\geq j>i\geq 1$, to prove that
$\widetilde{G}_0$ (and hence $G_0$) cannot be a rational function, and hence that it is differentially transcendental over $C(s)$.

For $v=1$, the kernel curve defined by $xy-\frac{1}{2}\l(y^2+xy^2+x^2y^2+x^2\r)=0$ is parameterized by:
    \[
    x_1(s)=\frac{- s^{2}}{s^{2} + 1}\quad\text{and}\quad 
    y_1(s)=\frac{- s^{2}}{s^{2} + s + 1}.
    \]
We have $\displaystyle{\widetilde{x}_1(s)=-\frac{s^{2}}{2 s^{2} + 2 s + 1}}$, so that
$\widetilde{x}_1(s)=x_1\l(\frac{s}{1+s}\r)$.
Therefore, the power series $G_1(s)$
in \eqref{eq:G1} satisfies the
difference equation:
    \[
     G_1(s)-G_1\l({\frac {s}{s+1}}\r)=\frac{1}{2}
     +\frac{s-1}{s^2 + s + 1} 
     +\frac{1}{2(2 s^2 + 2 s + 1)}
     -\frac{s}{s^2 + 1}.
    \]
We need to prove that this functional equation has no rational solutions.
If we set $\widetilde{s}=1/s$, we get
    \[
     G_1(\widetilde s)-G_1\l(\widetilde s+1\r)=
     \frac{2 \widetilde s + 1}{\widetilde s^2 + \widetilde s + 1}
     -\frac{\widetilde s}{\widetilde s^2 + 1}
     -\frac{\widetilde s + 1}{\widetilde s^2 + 2 \widetilde s + 2}.
    \]
We define $\widetilde{G}_1(\widetilde s)=G_1(\widetilde s)+\frac{\widetilde s}{\widetilde s^{2} + 1}$, so that:
     \[
     \widetilde G_1(\widetilde s)-\widetilde G_1\l(\widetilde s+1\r)=
     \frac{2 \widetilde s + 1}{\widetilde s^{2} + \widetilde s + 1} - \frac{2 (\widetilde s + 1)}{\widetilde s^{2} + 2 \widetilde s + 2}.
     \]
The poles of the denominator of the right-hand side are
    \[
    -\frac{1}2+\frac{i \sqrt{3}}{2}\,,~
    -\frac{1}2-\frac{i \sqrt{3}}{2}\,,~
    -1+i\,,~
    -1-i\,.
    \]
Since any pair of them does not differ by an integer, the functional equation above cannot admit a rational solution.
We conclude that $G_1(s)$ is differentially transcendental over $\C(\{s\})$.
\end{proof}

\begin{remark}
\normalfont
The idea of considering a parametrization as an important tool to solve probabilistic or combinatorial functional equations goes back to \cite{FIM17}. 
The parametrization \eqref{eq:parametrization_A} of the kernel curve of model $\mathcal{A}$ and
the parametrization \eqref{eq:ParaDdhrs} of model $\mathcal{D}$ coincide with the ones used in \cite{DHRS0}.
In the case of model $\mathcal{B}$, one can recover the parametrization used in \cite{DHRS0} from parametrization \eqref{eq:ParaBdhrs}
applying the variable change $s\mapsto\frac{s}{\sqrt{v}}$. The variable change $s\mapsto\frac{s}{\sqrt{v^2 - v + 1}}$
allows to recover the parametrization in \cite{DHRS0} from \eqref{eq:ParaCmaple}, while the variable change 
$s\mapsto\frac{s}{\sqrt{v^4 - v^2 + 1}}$ works for \eqref{eq:ParaEmaple}. 
So, up to minor variable changes, for $t\in(0,1/2)$, we essentially use the same parametrization used in \cite{DHRS0}.
\par
The situation is more articulated for $v=1$. As far as the models $\mathcal{A}$, $\mathcal{C}$ and $\mathcal{E}$ are concerned, 
we use a specific parametrization valid only for $v=1$.  
For models $\mathcal{B}$
and $\mathcal{D}$, a variable change allows to obtain a parametrization that specialize properly at $v=1$.
The parametrization \eqref{eq:ParaBdhrs} of model~$\mathcal{B}$ becomes trivial for $v=1$. 
However, we can consider another parametrization that has a ``good'' specialization at $v=1$,
obtained applying a variable change:
    \begin{equation}
    \label{eq:ParaBnotes}
    x_0\l(\frac{s}{s+1-v^2}\r)=-{\frac {s \left(s-{v}^{2}+1 \right) }{v \left( s+v+1 \right) }}\quad \text{and}\quad
    y_0\l(\frac{s}{s+1-v^2}\r)=-{\frac {s \left(s-{v}^{2}+1 \right) }{ \left( s+1 \right)  \left( s+v+1 \right) }}.
    \end{equation}
Moreover we have:
    \[
    \widetilde x_0\l(\frac{s}{s+1-v^2}\r)=
   -{\frac {s \left(s-{v}^{2}+1 \right) v}{ \left( s+1 \right)  \left(
 \left( {v}^{2}+v+1 \right) s+v+1 \right) }},
    \]
so that
    \[
    \widetilde x_0\l(\frac{s}{s+1-v^2}\r)=x_0\l(\frac{s}{s+1-v^2}\circ{\frac {v^2s}{s+1}}\r).
    \]
Also in the case of model $\mathcal{D}$ the parametrization \eqref{eq:ParaDdhrs} becomes trivial for $v=1$. 
However, we can consider another parametrization that has a ``good'' specialization at $v=1$,
obtained applying a variable change:
    \begin{equation}\label{eq:ParaDnotes}
    x_0\l(\frac{s}{v-1-vs}\r)=-\frac{\left(v +1\right)^{2} s \left(1+\left(s -1\right) v \right)}{\left(s -1\right)^{2} v^{2}}\ \text{and}\ 
    y_0\l(\frac{s}{v-1-vs}\r)=\frac{\left(v +1\right)^{2} s \left(1+\left(s -1\right) v \right)}{\left(s v +1\right) \left(s -1\right) v}.
    \end{equation}
Moreover we have:
    \[
    \widetilde x_0\l(\frac{s}{v-1-vs}\r)=
    -\frac{\left(v +1\right)^{2} s \left(s v -v +1\right)}{\left(s v +1\right)^{2}}\,,
    \]
so that
    \[
    \widetilde x_0\l(\frac{s}{s+1-v^2}\r)=x_0\l(\frac{s}{s+1-v^2}\circ{\frac{s \,v^{2}}{s\,v(v+1) +1}}\r).
    \]
Specializing $v=1$ at \eqref{eq:ParaBnotes} and \eqref{eq:ParaDnotes}, one finds \eqref{eq:paraBv=1} and \eqref{eq:paraDv=1}, respectively. 
\end{remark}

We would like to discuss the dichotomy in the behavior of $Q(x,y)$ in Theorem~\ref{thm:main-Galois}, with respect to the values of $t$. 
First of all, we notice that we cannot expect solutions of $q$-difference equations to be strongly transcendental;
in fact, solutions of $q$-difference equations can be (and one could even say tend to be)\ meromorphic in a neighborhood of $0$, as the next example shows. 

\begin{exa}
For $q\in\C$, $|q|>1$, the $q$-exponential series $$e_q(t):=\sum_{n\geq 0}\frac{t^n}{\prod_{i=1}^n(1-q^i)}$$ is an entire function, 
solution of the functional equation $y(qt)=(1+t)y(t)$. 
One derives from the equation itself that it has a half spiral of simple zeros. 
Taking the logarithmic derivative with respect to $t\frac{d}{dt}$,  
one proves that the meromorphic function $\epsilon_q(t):=t\frac{e_q(t)'}{e_q(t)}$ over $\C$ is solution of: 
    \[
    y(qt)=y(t)+\frac{t}{1+t}\,.
    \]
\end{exa}

As we have shown in the proofs above, for $t\in\bigl(0,\frac 12\bigr)$ we find a $q$-difference equation for the generating function (after replacing the variable by the uniformizing variable), 
i.e., a functional equation with respect to a homography with two fixed points. For $t=\frac{1}{2}$, the two fixed points coincide, and 
we get a functional equation with respect to a homography with one fixed point. One can heuristically argue that the confluence of two special points to one causes an increase in the complexity of the solutions, i.e., the fact of jumping from a usual differential transcendence to a strong differential transcendence. In the next section we give a probabilistic interpretation of $Q(x,y)$ for any $t\in\bigl(0,\frac{1}{2}\bigr]$, which makes the point $\frac{1}{2}$ appear to be critical from a probabilistic point of view as well. This corroborates the change in behavior at this point.


\section{Probabilistic interpretation using Green functions}
\label{sec:probab}

As explained in our introduction, one of the main messages of the present work is that for all the models listed in Table~\ref{tab:list}, the point $t=\tfrac{1}{2}$ is very special for the generating function $Q(x,y,t)$, in particular because the differential transcendence degenerates to a strong differential transcendence. One can also make the following observations to reinforce the idea that something special happens for $t=1/2$:
\begin{itemize}
    \item At $t=\tfrac{1}{2}$ the kernel ${K(x,y,t) = xy(1-t \cdot \chi_{\cS}(x,y))}$ introduced in \eqref{eq:kernel} degenerates, in some cases even becomes a reducible polynomial;
    \item As shown in Proposition~\ref{prop:rational_coeff}, the coefficients of the series $Q(x,y,t)$ are actually rational numbers at $t=\frac{1}{2}$;
    \item Strong refinements of the above statement will be obtained later on, see e.g.\ Theorem~\ref{thm:explicit_A} (to be proved in Section~\ref{sec:case_1/2}), where we will prove a connection between the coefficients of $Q(x,y,\frac{1}{2})$ and (rational)\ Bernoulli numbers.
\end{itemize}
In this section we give a probabilistic interpretation of why $t=\frac{1}{2}$ is a very special point, using the classical concept of the Green function for random walks.

\subsection{Green function for random walks on segments}

Let $(X(n))_{n\geq 0}$ be a classical simple random walk on $\mathbb Z$, with uniform jump probabilities to the left and right nearest neighbors. Let $\tau_k$ define the first exit time of the random walk from the segment $\widetilde S_k=\{-k,\ldots,k\}$, in bijection with the set $S_k$ introduced in \eqref{eq:segments}, i.e.,
\begin{equation*}
    \tau_k = \inf\{ n\geq 0 : X(n)\notin S_k\}.
\end{equation*}
By definition (see e.g.\ \cite[Chap.~1]{Wo00}), the Green function for the random walk killed when exiting the segment $S_k$ is
\begin{equation}
\label{eq:def_Green_1D}
    G_k(P,Q \vert t) = \sum_{n\geq 0} \mathbb P_P\bigl(X(n) = Q,\tau_k>n\bigr) t^n,
\end{equation}
where $P\in \widetilde S_k$ and $Q\in \widetilde S_k$ are arbitrary starting and ending points, respectively, and under $\mathbb P_P$ the random walk $(X(n))_{n\geq 0}$ starts at $P$ almost surely, i.e., $X(0)=P$.

It is well known (see again \cite[Chap.~1]{Wo00}) that the radius of convergence of $G_k(P,Q\vert t)$ in \eqref{eq:def_Green_1D} as a power series in $t$ is independent of $P$ and $Q$; it is denoted by $\frac{1}{\rho}$, where $\rho\in(0,1]$ is often called the \emph{spectral radius} of the random walk.

In the particular case $t=1$, the Green function \eqref{eq:def_Green_1D} admits the following classical interpretation in terms of a mean number of visits before exiting $S_k$:
\begin{equation*}
    G_k(P,Q) := G_k(P,Q\vert 1) = \mathbb E_{P}\left(\sum_{n\geq 0} \mathds{1}_{\{X(n) = Q,\tau_k>n\}}\right),
\end{equation*}
where $\mathds{1}$ denotes the indicator function. Using the above notation, we immediately deduce the following reformulation of the series $W_k(P,Q,t)$ introduced in \eqref{eq:GF_segments}:
\begin{lemma}
\label{lem:series-to-Green}
Let $k\geq 1$. For any $P,Q\in S_k$, let $\widetilde P,\widetilde Q$ be the corresponding points on $\widetilde S_k$. For any $\vert t\vert <\frac{1}{2\cos(\frac{\pi}{2+k})}$, we have
\begin{equation*}
    W_k(P,Q,t) = G_k(\widetilde P,\widetilde Q\vert 2t).
\end{equation*}
\end{lemma}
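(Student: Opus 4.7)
The plan is to exhibit a direct combinatorial bijection between antidiagonal walks on $S_k$ and nearest-neighbor walks on $\widetilde{S}_k$, and then observe that the simple random walk assigns probability $2^{-n}$ to every trajectory of length $n$, which converts the series of Lemma~\ref{lem:rationality_1D_RW} into the Green function of the killed walk.

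More concretely, I would first fix the bijection $\varphi \colon S_k \to \widetilde{S}_k$ that sends $(k-i,i) \in S_k$ to $i-\tfrac{k}{2}$ (or any affine identification of the two segments of length $k+1$; one can equally well send $(k-i,i)$ to $2i-k$ and work on $\{-k,-k+2,\ldots,k\}$). The key point is that the only in-segment moves on $S_k$ are $(-1,+1)$ and $(+1,-1)$, which under $\varphi$ become exactly the nearest-neighbor jumps $\pm 1$ on $\widetilde{S}_k$. Therefore a path of length $n$ in $S_k$ from $P$ to $Q$ corresponds bijectively to a path of length $n$ in $\widetilde{S}_k$ from $\widetilde{P}=\varphi(P)$ to $\widetilde{Q}=\varphi(Q)$, and exiting $S_k$ on either end corresponds to exiting $\widetilde{S}_k$. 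This yields the identity on cardinalities
\[
w_k(P,Q,n) \;=\; \#\{\text{nearest-neighbor paths in }\widetilde{S}_k \text{ of length }n, \text{from } \widetilde P \text{ to } \widetilde Q\}.
\]

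Next I would translate this count into the probabilistic quantity. Since the simple random walk chooses each of the two nearest neighbors uniformly, every trajectory of length $n$ has probability $2^{-n}$; hence
\[
\mathbb{P}_{\widetilde P}\bigl(X(n)=\widetilde Q,\,\tau_k>n\bigr) \;=\; \frac{w_k(P,Q,n)}{2^n}.
\]
Multiplying by $(2t)^n$ and summing over $n \geq 0$ yields, by definition~\eqref{eq:def_Green_1D} of $G_k$ and \eqref{eq:GF_segments} of $W_k$,
\[
G_k(\widetilde P,\widetilde Q \mid 2t) \;=\; \sum_{n\geq 0} \frac{w_k(P,Q,n)}{2^n}(2t)^n \;=\; \sum_{n\geq 0} w_k(P,Q,n)\,t^n \;=\; W_k(P,Q,t).
\]
The convergence range $|t|<\bigl(2\cos(\tfrac{\pi}{2+k})\bigr)^{-1}$ for the left-hand side coincides with the one stated in Lemma~\ref{lem:rationality_1D_RW} for $W_k(P,Q,t)$, because the spectral radius $\rho$ of the simple random walk killed on exiting $\widetilde{S}_k$ equals $2\cos\bigl(\tfrac{\pi}{k+2}\bigr)\cdot\tfrac{1}{1}$; equivalently, the singularity of $G_k(\cdot\mid s)$ at $s=1/\rho=\bigl(2\cos(\tfrac{\pi}{k+2})\bigr)^{-1}$ corresponds under $s=2t$ to the pole of $W_k$ already identified.

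There is no substantial obstacle: the only point requiring a small amount of care is making the correspondence $\varphi$ and the identification of boundary exits completely explicit, so that ``$\tau_k > n$'' in the probabilistic setting matches ``the path stays on $S_k$'' in the combinatorial one. Once that is set up, the proof reduces to the elementary identity $(2t)^n \cdot 2^{-n} = t^n$.
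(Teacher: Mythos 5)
Your argument is correct and is precisely the ``immediate'' deduction the paper has in mind (the paper states the lemma without proof): the antidiagonal steps on $S_k$ biject with nearest-neighbour steps on a segment of $k+1$ sites, each length-$n$ trajectory of the simple random walk has probability $2^{-n}$, and $(2t)^n\cdot 2^{-n}=t^n$; your care with the affine identification is welcome, since the paper's own $\widetilde S_k=\{-k,\dots,k\}$ has the wrong cardinality. One small slip in your closing remark: the spectral radius of the killed simple random walk is $\cos\bigl(\tfrac{\pi}{k+2}\bigr)$, not $2\cos\bigl(\tfrac{\pi}{k+2}\bigr)$, so the singularity of $G_k(\cdot\mid s)$ sits at $s=1/\cos\bigl(\tfrac{\pi}{k+2}\bigr)$ and only becomes $\bigl(2\cos(\tfrac{\pi}{k+2})\bigr)^{-1}$ after the substitution $s=2t$ --- but this does not affect the validity of the identity, which is a formal power-series equality combined with the convergence already established in Lemma~\ref{lem:rationality_1D_RW}.
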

Let us do a series of four remarks.
\begin{itemize}
\item The coefficient $2$ in the Green function $G_k$ simply comes from normalising the transition probabilities of the random walk $(X(n))_{n\geq 0}$ to $1$, while in the enumerative series $W_k$ the weights add up to $2$.
\item In particular, $W_k(P,Q,\frac{1}{2}) = G_k(P,Q\vert 1)$. With this perspective, the case $t=\frac{1}{2}$ is rather special as it corresponds exactly to the Green function of the simple random walk with $t=1$.
\item Together with the identity \eqref{eq:decomp_GF}, which expresses the series $\sum_{n\geq0} \#_{\cA}\{(0,0)\stackrel{n}{\to} Q_{2k}\}t^{n}$ as a finite sum of terms $W_k(P,Q,t)$, we deduce that $\sum_{n\geq0} \frac{\#_{\cA}\{(0,0)\stackrel{n}{\to} Q_{2k}\}}{2^{n}}$ is a finite sum of products of Green functions as above.
\item The point $\frac{1}{2}$ appears as the smallest upper bound of all radii of convergence of the series in Lemma~\ref{lem:series-to-Green}.
\end{itemize}

\subsection{An interpretation using Cram\'er's transform}
\label{subsec:Cramer}

In this paragraph, we provide a concrete interpretation of the series $Q(x,y,\frac{1}{2})$, using Green functions and the idea of Cram\'er's transform. Here we will work directly with the two-dimensional model, without using the decomposition with walks on segments $S_k$ as in Figure~\ref{fig:recursive_cons}.

Given a step set $\cS$, it is convenient to introduce the associated random walk $(Z(n))_{n\geq 0}$ with uniform probability distribution on $\cS$, killed when leaving the positive quarter plane. By definition, and similarly to \eqref{eq:def_Green_1D}, the Green function at $(i,j)$ is
\begin{equation}
\label{eq:def_Green}
    G(i,j\vert t) = \sum_{n\geq 0} \mathbb P_{(0,0)}\bigl(Z(n) = (i,j),\tau>n\bigr)t^n,
\end{equation}
where $\tau$ is the first time the walk goes out of the quadrant.

The most natural value of $t$ in \eqref{eq:def_Green} is $1$, because in this particular case we have an interpretation of the Green function as the expected number of visits of the random walk at the given site $(i,j)$:
\begin{equation*}
    G(i,j\vert 1) = \mathbb E_{(0,0)}\left(\sum_{n\geq 0} \mathds{1}_{\{Z(n) = (i,j)\}}\right).
\end{equation*}
Using the obvious identity (see for instance \cite[Eq.~(1)]{BoRaSa14})
\begin{equation*}
    \mathbb P_{(0,0)}\bigl(Z(n) = (i,j),\tau>n\bigr) = \frac{\#_\cS\{(0,0)\stackrel{n}{\to} (i,j)\}}{\#\cS ^n},
\end{equation*}
we deduce a direct relation between Green functions and the generating function $Q(x,y,t)$, which reads
\begin{equation}
\label{eq:relation_Q_G}
    Q(x,y,t) = \sum_{i,j\geq 0}  G(i,j\vert \#\cS  t)x^iy^j.
\end{equation}
We can first observe that evaluating the above identity at $t=\frac{1}{\#\cS }$ gives
\begin{equation*}
    Q(x,y,\tfrac{1}{\#\cS }) = \sum_{i,j\geq 0}  G(i,j\vert1)x^iy^j.
\end{equation*}
Is it possible to evaluate \eqref{eq:relation_Q_G} at a value of $t>\frac{1}{\#\cS }$? Ultimately at  $t=\frac{1}{2}$? (And thus the Green function at $\frac{\#\cS }{2}$.)

To answer these questions, we will use Cram\'er transform, a classical tool in random walk theory, recalled in \cite[Sec.~2.3]{BoRaSa14} in our combinatorial context. Introduce a new random walk, say $(W(n))_{n\geq0}$, whose step set $ \cS$ is the same as that of $(Z(n))_{n\geq0}$, but whose distribution is no longer uniform: instead the probability of moving in direction $(i,j)$ is given by
\begin{equation*}
    \frac{\alpha^i\beta^j}{\chi_\cS(\alpha,\beta)},
\end{equation*}
for some positive quantities $\alpha,\beta$ (simply take $\alpha=\beta=1$ to get the uniform distribution)\ and $\chi_\mathcal S$ defined in \eqref{eq:def_inventory}.

The local probabilities of the random walks $(Z(n))_{n\geq0}$ and $(W(n))_{n\geq0}$ are related as follows (see again \cite{BoRaSa14})
\begin{equation*}
    \mathbb P_{(0,0)}\bigl(W(n) = (i,j),\tau>n\bigr) = \alpha^i \beta^j \left(\frac{\#\cS }{\chi_\cS(\alpha,\beta)}\right)^n\mathbb P_{(0,0)}\bigl(Z(n) = (i,j),\tau>n\bigr),
\end{equation*}
and thus, denoting $G_{(\alpha,\beta)}$ the Green function for the random walk $(W(n))_{n\geq0}$, one has
\begin{equation}
\label{eq:Green-to-Green}
    G(i,j\vert t) = \alpha^{-i}\beta^{-j} G_{(\alpha,\beta)}(i,j\vert \tfrac{\chi_\cS(\alpha,\beta)}{\#\cS }t).
\end{equation}
The above identity allows to interpret the Green function $G(i,j\vert t)$ for a general value of $t$ in terms of a classical Green function at $t=1$, provided in the right-hand side of \eqref{eq:Green-to-Green} it is possible to adjust the parameters so as to have $\tfrac{\chi_\cS(\alpha,\beta)}{\#\cS }t=1$.

For our purposes, we want to evaluate \eqref{eq:Green-to-Green} at $t=\frac{\#\cS }{2}$, because indeed
\begin{equation}
\label{eq:QGG}
    [x^iy^j]Q(x,y,\tfrac{1}{2}) =  G(i,j\vert \tfrac{\#\cS }{2}) = \alpha^{-i}\beta^{-j} G_{(\alpha,\beta)}(i,j\vert \tfrac{\chi_\cS(\alpha,\beta)}{2}).
\end{equation}
Unfortunately, for any given singular model, we cannot find values of the parameters $\alpha,\beta$ such that $\frac{\chi_\cS(\alpha,\beta)}{2}=1$. Indeed, since $\cS$ contains the jumps $(-1,1)$ and $(1,-1)$, we have
\begin{equation*}
    \chi_\cS(\alpha,\beta)>\frac{\alpha}{\beta}+\frac{\beta}{\alpha}\geq 2.
\end{equation*}
As a conclusion, it is possible to interpret $[x^iy^j]Q(x,y,t)$ as a classical Green function for any $t\in\bigl[\frac{1}{\#\cS},\frac{1}{2}\bigr)$, but not at the limiting point $t=\frac{1}{2}$. However, we can take $\alpha=\beta=\varepsilon\to 0$, and in this case $\chi_\cS(\alpha,\beta)\to2$. So a possible interpretation of $[x^iy^j]Q(x,y,\tfrac{1}{2})$ is the following:
\begin{equation*}
    [x^iy^j]Q(x,y,\tfrac{1}{2}) = \lim_{\varepsilon\to 0}  \frac{G_{(\varepsilon,\varepsilon)}(i,j\vert \tfrac{\chi_\cS(\varepsilon,\varepsilon)}{2})}{\varepsilon^{i+j}}.
\end{equation*}

\subsection{Branching random walks}
To give an interpretation of $[x^iy^j]Q(x,y,\tfrac{1}{2})$ as the Green function of a concrete probabilistic model, we will use branching random walks. While this model is very classical in probability theory, see e.g.\ \cite{Shi}, it has been less studied from a combinatorial perspective. See \cite{nondeterministic} for a connection with non-deterministic paths.

\begin{figure}
    \centering
\begin{tikzpicture}[scale=0.6]
    \draw[->,black,very thick] (0,0) -- (0,13);
    \draw[->,black,very thick] (0,0) -- (13,0);
    \draw[->,purple,very thick] (0,0) -- (1.5,1.5);
    \draw[->,violet,very thick] (1.5,1.5) -- (0,3);
    \draw[->,violet,very thick] (1.5,1.5) -- (3,3);
    \draw[->,blue,very thick] (0.1,3) -- (1.6,1.5);
    \draw[->,blue,very thick] (3,3) -- (4.5,1.5);
    \draw[->,blue,very thick] (3,3) -- (1.5,4.5);
    \draw[->,green,very thick] (1.5,4.5) -- (0,6);
    \draw[->,green,very thick] (1.5,1.5) -- (3,0);
    \draw[->,green,very thick] (4.5,1.5) -- (6,3);
    \draw[->,yellow,very thick] (0,6) -- (1.5,7.5);
    \draw[->,yellow,very thick] (6,3) -- (7.5,4.5);
    \draw[->,yellow,very thick] (3,0) -- (4.5,1.5);
    \draw[->,orange,very thick] (4.5,1.5) -- (6,0);
    \draw[->,orange,very thick] (4.6,1.5) -- (3.1,3);
    \draw[->,orange,very thick] (7.5,4.5) -- (9,6);
    \draw[->,orange,very thick] (7.5,4.5) -- (6,6);
    \draw[->,orange,very thick] (1.5,7.5) -- (0,9);
    \draw[->,orange,very thick] (1.5,7.5) -- (3,6);

     \draw[->,red,very thick] (6,0) -- (7.5,1.5);
    \draw[->,red,very thick] (3,3) -- (4.5,4.5);
    \draw[->,red,very thick] (9,6) -- (7.5,7.5);
    \draw[->,red,very thick] (6,6) -- (4.5,7.5);
    \draw[->,red,very thick] (0,9) -- (1.5,10.5);
    \draw[->,red,very thick] (3,6) -- (4.5,4.5);

    \draw[->,magenta,very thick] (7.5,1.5) -- (9,0);
    \draw[->,magenta,very thick] (7.5,1.5) -- (9,3);
    \draw[->,magenta,very thick] (4.5,4.5) -- (6,3);
    \draw[->,magenta,very thick] (4.6,4.5) -- (6.1,3);
    \draw[->,magenta,very thick] (7.5,7.5) -- (9,6);
    \draw[->,magenta,very thick] (7.5,7.5) -- (9,9);
    \draw[->,magenta,very thick] (4.5,7.5) -- (6,9);
    \draw[->,magenta,very thick] (1.5,10.5) -- (0,12);
    \draw[->,magenta,very thick] (1.5,10.5) -- (3,12);
    \draw[->,magenta,very thick] (4.5,4.5) -- (6,6);
   \end{tikzpicture}
   \begin{quote}
    \caption{A path of branching random walk. Each particle jumps according to $\cA$ and then splits into a random number of particles, uniformly distributed in $\{1,2\}$. Individuals exiting the quarter plane are killed. }
    \label{fig:rainbow}
    \end{quote}
\end{figure}
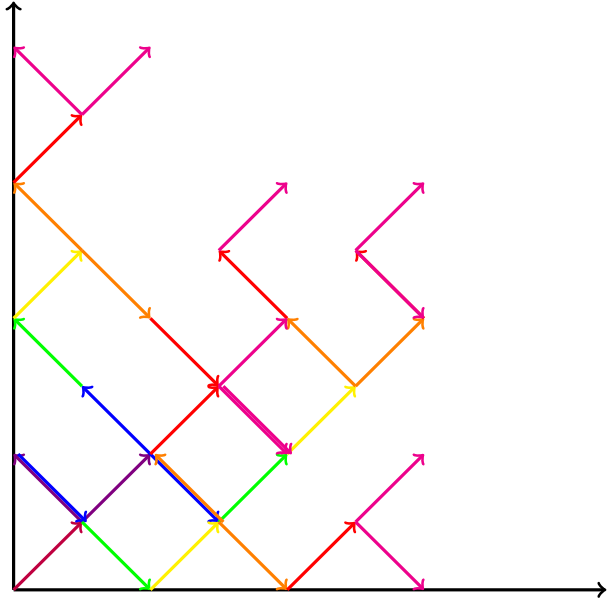

Let us briefly describe the model. We introduce an offspring distribution $\nu$ on $\{0,1,2,\ldots \}$. The first particle (called the \emph{ancestor})\ starts at the origin and jumps (according to one given model $\cS$, for instance $\cA$); then it immediately splits into a random number of particles, distributed according to $\nu$. Then each individual of the new generation makes independently one jump from $\cS$, and splits into a random number of new individuals. All individuals exiting the quarter plane are killed. See Figure~\ref{fig:rainbow} for an illustration.

The mean number of visits made by the branching random walk at a fixed point $(i,j)$ starting from the origin is given by
\begin{equation*}
    G^{\textnormal{BRW}}_\nu(i,j)=\mathbb E_{(0,0)}\left(\sum \mathds{1}_{\{\textnormal{BRW} = (i,j)\}}\right).
\end{equation*}
The reason why we introduced this model is that there is a strong connection between the Green function of the branching random walk and the $t$-Green function of one single random walk, as introduced in \eqref{eq:def_Green}:
\begin{equation*}
    G^{\textnormal{BRW}}_\nu(i,j) = G(i,j\vert \mathbb E\nu),
\end{equation*}
with $\mathbb E\nu$ denoting the mean value of the distribution $\nu$. Accordingly,
\begin{equation*}
[x^iy^j]Q(x,y,\tfrac{1}{2}) =  G(i,j\vert \tfrac{\#\cS }{2}) = G^{\textnormal{BRW}}_\nu(i,j),
\end{equation*}
provided $\mathbb E\nu=\tfrac{\#\cS }{2}$.
For example, in the case $\cS=\cA$, we can take $\nu$ to be the uniform distribution over $\{1,2\}$, and get $\mathbb E\nu=\frac{3}{2} = \tfrac{\#\cS }{2}$. See Figure~\ref{fig:rainbow}.

This reformulation has the advantage of giving a clear probabilistic interpretation of the coefficients of $Q(x,y,\tfrac{1}{2})$; on the other hand, it does not immediately explain why the point $t=\tfrac{1}{2}$ is critical.

\subsection{Weighted models}
\label{subsec:weights}

While the results presented in this paper concern unweighted walks (i.e., each step from the step set $\mathcal S$ is taken with weight $1$), they can be made more general using the following simple idea. Given any step set $\mathcal S$ and positive constants $\alpha_1$, $\alpha_2$ and $\beta$, the generating function $Q(\alpha_1 x,\alpha_2 y ,\beta t)$ turns out to be the generating function $Q_a(x,y,t)$ of the step set $\mathcal S$ with weights
\begin{equation}
\label{eq:weights}
    a_s = \beta  {\alpha_1}^{s_1} {\alpha_2}^{s_2},
\end{equation}
for $s=(s_1,s_2)\in\mathcal S$. Such families of weights are similar to those used in Section~\ref{subsec:Cramer} to construct the Cram\'er transform of the random walk; the only difference is that in \eqref{eq:weights} we have one more parameter, because we don't force the sum of the weights to be $1$.

The weights \eqref{eq:weights} are constructed using three parameters; consequently, given any weights $(w_s)_{s\in\mathcal S}$ on the models $\mathcal A$ and $\mathcal D$ (which both have cardinality three), we can find values of $\alpha_1$, $\alpha_2$ and $\beta$ such that $a_s=w_s$. For the other models with four or five steps, the weights \eqref{eq:weights} are not sufficient to cover all possible weights, but allow us to reduce the number of parameters by three. 

A consequence of the above remarks and of the equality $Q(\alpha_1 x,\alpha_2 y ,\beta t)=Q_a(x,y,t)$ of generating functions is that all transcendence results immediately extend to models weighted according to \eqref{eq:weights}.


\section{Explicit expressions and Bernoulli numbers}
\label{sec:case_1/2}

In the case of the models $\cA$, $\cB$ and $\cD$, we can directly get access to the nature of the generating function  $Q(x,y,t)$ via explicit formulas for~$Q$ evaluated at $t=\frac{1}{2}$.
These formulas are expressed in terms of the \emph{Bernoulli numbers} $(B_n)_{n\geq 0}$, classically defined via their exponential generating function:
    \begin{equation} \label{eq:EGF-Bernoulli}
    \sum_{n\geq 0}B_n\frac{x^n}{n!}=\frac{x}{e^x-1}.
    \end{equation}
We recall that the EGF~\eqref{eq:EGF-Bernoulli} is D-algebraic but that the ordinary generating function
$\sum_{n\geq 0}B_n {x^n}$
is D-transcendental~\cite[Prop.~11]{bostan-divizio-raschel-BELL}, and even strongly D-transcendental~\cite[Thm~3]{bostan-divizio-raschel-BELL}.
Note that Bernoulli numbers admit nice closed-form expressions (see for instance~\cite{Gould72}), such as
    \begin{equation} \label{eq:closed-Bernoulli}
B_n = \sum_{k=0}^n (-1)^k \binom{n+1}{k+1} \frac{0^n + 1^n + \cdots + k^n}{k+1},
	\end{equation}
therefore Theorems~\ref{thm:A-Bernoulli}, ~\ref{thm:D-Bernoulli} and ~\ref{thm:B-Bernoulli} below
provide closed-form expressions for 
$Q_{\cA}(x,0,\tfrac 12)$, 
$Q_{\cB}(x,0,\tfrac 12)$
and 
$Q_{\cD}(x,0,\tfrac 12)$.

We leave as natural questions for further investigation whether $Q_{\cC}(x,0,\tfrac 12)$ and $Q_{\cE}(x,0,\tfrac 12)$ can equally
be expressed in terms of Bernoulli numbers, and for which models $\cS$ can the generating function $Q_{\cS}(x,0,q/(1+q^2))$ be
expressed in terms of some $q$-deformations of the Bernoulli numbers.

\subsection{Model $\cA$}
\begin{theorem}\label{thm:A-Bernoulli}
In the case of model $\cA$, we have:
  \begin{equation} \label{eq:formula-A-Bernoulli}
    Q_{\cA}(x,0,\tfrac 12)= 2 \cdot \sum_{n \geq 0}   (2^{2n+2}-1) \frac{(-1)^n}{n+1} B_{2n+2} x^{2n}.
  \end{equation}
In particular: 
\begin{enumerate}
	\item[(a)] $Q_{\cA}(x,0,\tfrac 12)$ is strongly D-transcendental;
	\item[(b)] the coefficients sequence $(a_n)_{n\geq 0} = (1,0,{\frac{1}{2}},0,1,0,{\frac{17}{4}},0,31,0,{\frac{691}{2}},\ldots)$ of $Q_{\cA}(x,0,\tfrac 12)$ satisfies
\[ 2 \, a_{n+1} = \sum_{\ell=1}^n \binom{n}{\ell} a_{\ell-1} a_{n-\ell} \quad \text{for all} \; n \geq 0 .	\]
\end{enumerate}
\end{theorem}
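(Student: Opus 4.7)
The plan is to first establish the tangent-number closed form
\[
Q_\cA(x,0,\tfrac 12)=\sum_{n\geq 0}\tfrac{T_n}{4^n}\,x^{2n}
\]
(which is Theorem~\ref{thm:explicit_A}), and then to derive the Bernoulli form together with the consequences~(a) and~(b). The equivalence of the two stated closed forms is the classical identity $T_n=(-1)^n\,4^{n+1}(4^{n+1}-1)\,B_{2n+2}/(2(n+1))$, obtained by matching $\tan(z)=\sum_{n\geq 0}T_n\,z^{2n+1}/(2n+1)!$ with the Bernoulli expansion $\tan(z)=\sum_{n\geq 1}(-1)^{n-1}2^{2n}(2^{2n}-1)B_{2n}z^{2n-1}/(2n)!$.

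For the tangent-number form, I would exploit the functional equation satisfied by $R(x):=Q_\cA(x,0,\tfrac12)$, obtained by substituting the small root $y=x/(1+ix)$ of the kernel into~\eqref{eq:kernel-funct-eq} at $t=\tfrac12$ (equivalently, by rewriting~\eqref{eq:ModA1/2}):
\[
(1+ix)^2\,R(x)+R\!\left(\tfrac{x}{1+ix}\right)=2(1+ix),\qquad R(0)=1.
\]
Expanding in powers of $x$ uniquely and recursively determines $a_n:=[x^n]R(x)$ from $a_0=1$, forces $a_{2k+1}=0$, and yields the explicit (alternating) recurrence $2a_{2k}=a_{2k-2}-\sum_{j=1}^{k-1}a_{2j}\binom{2k-1}{2j-1}(-1)^{k-j}$. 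It then suffices to verify that the candidate sequence $(T_n/4^n)_{n\geq 0}$ satisfies this same recurrence. I anticipate this verification to be the main obstacle: since $\sum_n(T_n/4^n)x^{2n}$ has radius of convergence zero, the argument must be purely formal. The cleanest route is to pass to the EGF $A(y):=\sum_n a_n y^n/n!$ and show it satisfies $4A(y)=4+B(y)^2$ with $B(y):=\int_0^y A(t)\,dt$; this pins down $A$ as the unique formal power series solution with $A(0)=1$, namely $\sec^2(y/2)$, and then $a_{2n}=(2n)!\,[y^{2n}]\sec^2(y/2)=T_n/4^n$ by the classical expansion of $\sec^2$. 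An alternative, more combinatorial route is to combine Proposition~\ref{prop:expression_matrix_A} with the explicit form~\eqref{eq:def_F_n_inverse_at_1/2} of $F_n|_{t=1/2}$, reducing $a_{2k}$ to a multilinear sum of entries $2\min(i,j)(n+1-\max(i,j))/(n+1)$ that one identifies with $T_k/4^k$.

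Assertion~(a) then follows from the Bernoulli form combined with the strong D-transcendence of the Bernoulli OGF $\sum_n B_n x^n$ (\cite[Thm~3]{bostan-divizio-raschel-BELL}) and the standard closure properties recalled in that paper: $R(x)$ is linked to $\sum_n B_n x^n$ through a finite chain of operations preserving strong D-algebraicity (extraction of even-indexed coefficients, integration, variable substitutions, and the transformation $f\mapsto 4f(4\,\cdot\,)-f$ accounting for the Hadamard factor $(2^{2n+2}-1)$), so strong D-algebraicity of $R$ would force it on the Bernoulli OGF, a contradiction.

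Finally, (b) follows from the EGF identity $A(x)=\sec^2(x/2)$ once the main formula is secured: with $B(x):=\int_0^x A(t)\,dt=2\tan(x/2)$, the classical identity $\sec^2=1+\tan^2$ becomes $4A=4+B^2$, and differentiating gives $2A'(x)=A(x)B(x)=A(x)\int_0^x A(t)\,dt$. Extracting the coefficient of $x^n/n!$ on both sides and expanding the Cauchy product on the right recovers exactly the convolution recurrence $2a_{n+1}=\sum_{\ell=1}^n\binom{n}{\ell}a_{\ell-1}a_{n-\ell}$.
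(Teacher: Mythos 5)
Your overall architecture coincides with the paper's: reduce to the uniqueness of the formal power series solution of \eqref{eq:ModA1/2}, verify that the tangent/Bernoulli candidate satisfies that equation, and then obtain (a) from the strong D-transcendence of $\sum_n B_n x^n$ plus closure properties, and (b) from $\sec^2=1+\tan^2$. Your functional equation $(1+ix)^2R(x)+R(x/(1+ix))=2(1+ix)$ is a correct rewriting of \eqref{eq:ModA1/2}, the linear recurrence you extract from it is correct, and your derivations of (a) and (b) match the paper's.

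The gap is in the central verification, exactly where you anticipate it. You propose to close it by showing that the EGF $A(y)=\sum_n a_n y^n/n!$ of the solution of the functional equation satisfies $4A=4+B^2$. But that quadratic identity is not reachable from the functional equation by any direct manipulation: coefficient extraction from the functional equation yields only the \emph{linear} binomial recurrence you wrote down, and the systematic dictionary between the substitution $x\mapsto x/(1+cx)$ acting on an OGF $\sum_m t_m x^{m+1}$ and operations on its Borel transform $\sum_m t_m y^m/m!$ sends the binomial transform to multiplication by $e^{-cy}$. Applying this to the paper's normalization $T(x)=\sum_n T_n x^{2n+1}$, the equation $T(x/(1+2ix))+(1+2ix)T(x)=2x$ becomes
\[
e^{-2iy}\,\widehat{T}(y)+\widehat{T}(y)+2i\int_0^y\widehat{T}(s)\,ds=2,
\]
which for $\widehat{T}=\sec^2$ is the elementary identity $\sec^2(y)\bigl(1+e^{-2iy}\bigr)+2i\tan y=2$ (equivalently, after differentiation, the linear ODE $\widehat{T}'=2\tan(y)\,\widehat{T}$). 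It is \emph{this} identity --- not $\sec^2=1+\tan^2$ --- that encodes the functional equation on the EGF side; the relation $4A=4+B^2$ is just the definition of $\sec^2$ in terms of $\tan$ and carries no information about the homography, so asserting it for the solution of the functional equation is precisely the statement to be proved. The paper discharges this step by citing \cite[\S 2.2.1, Eq.~(2.7)]{bostan-divizio-raschel-BELL}, where the $\tau$-difference equation for the tangent OGF is established; your proposal needs either that citation or the Borel-transform computation above (or a direct verification that $(T_n/4^n)$ satisfies your alternating binomial recurrence). The alternative ``combinatorial route'' via Proposition~\ref{prop:expression_matrix_A} and \eqref{eq:def_F_n_inverse_at_1/2} is only gestured at and would require substantial additional work to identify the resulting multilinear sums with $T_k/4^k$.
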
 

\begin{proof}
Since equation~\eqref{eq:ModA1/2} admits a unique solution in $\mathbb{C}[[x]]$,
namely $\frac{x^2}{2} \cdot Q_{\cA}(x,0,\tfrac 12)$,
it is enough to prove that the power series $f(x):=\sum_{n \geq 0}   (2^{2n+2}-1) \frac{(-1)^n}{n+1} B_{2n+2} x^{2n+2}$ also satisfies \eqref{eq:ModA1/2}. 
\par
We introduce 
$T(x)  := \sum_{n \geq 0} T_n x^{2n+1}$, with $T_n = 2^{2n+1} (2^{2n+2}-1) \frac{(-1)^n}{n+1} B_{2n+2}$. 
It follows from \cite[\S2.2.1]{bostan-divizio-raschel-BELL}, and in particular from Eq.~(2.7) in {\it loc.\ cit.}, that 
$T(x) = x +2 x^{3}+16 x^{5}+272 x^{7}+7936 x^{9}+ \cdots$ satisfies the difference equation
\[
T \left( {\frac {x}{1+2\,ix}} \right) + \left( 1+2\,ix \right) T \left( x \right) =2\,x.
\]
Since
$f(2x) = 2x T(x)$, the ordinary generating function 
$f$ satisfies the equation
\[
f \left( {\frac {x}{1+\,ix}} \right) + f \left(  x \right) = \frac{x^2}{1+\,ix},
\]
which coincides with \eqref{eq:ModA1/2}.
Then, assertion (a) follows from the strong D-transcendence of $\sum_{n\geq 0}B_n {x^n}$ and from the closure properties of D-algebraic power series, while assertion (b) follows from the differential equation $\tan'(x) = 1+ \tan^2(x)$ satisfied by the tangent function $\tan(x) = \sum_{n \geq 0} \frac{T_n}{(2n+1)!} {x^{2n+1}}$.
\end{proof}

\begin{remark}
\normalfont
The last part of the proof can be used to show that for any $r\in \mathbb{N}$, the power series $\sum_{n \geq 0} \frac{a_n}{(n+r)!} x^n$ is D-algebraic.
For instance, $F_A(x) := \sum_{n \geq 0} \frac{a_n}{(n+1)!} x^n$ satisfies 
\[ x^2 F_A(x)^2 - 4x F_A'(x) - 4F_A(x) + 4 = 0.\]
In fact, $F_A(x) = \frac{2}{x} \tan \! \left(\frac{x}{2}\right)$.
\end{remark}

\subsection{Model $\cD$}

\begin{theorem} \label{thm:D-Bernoulli}
In the case of model $\cD$, we have:
    \begin{equation} \label{eq:formula-D-Bernoulli}
    Q_{\cD}(x,0,\tfrac 12)= 2 \, \sum_{n \geq 0}   (2n+3) B_{2n+2} (-x)^{n}.
    \end{equation}
In particular: 
\begin{enumerate}
	\item[(a)] $Q_{\cD}(x,0,\tfrac 12)$ is strongly D-transcendental;
	\item[(b)] the coefficients sequence $(d_n)_{n\geq 0} 
	= (1,\frac13,\frac13,\frac35,\frac53,\frac{691}{105},35,\frac{3617}{15},\ldots)$ 
	of $Q_{\cD}(x,0,\tfrac 12)$ satisfies
\[ 4(n+2)(2n+3) \, d_{n} = \sum_{\ell=0}^{n-1} \binom{2n+4}{2\ell+3} d_{\ell} d_{n-1-\ell} \quad \text{for all} \;\; n \geq 0 .	\]
\end{enumerate}
\end{theorem}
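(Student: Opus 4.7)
My plan closely follows the proof of Theorem~\ref{thm:A-Bernoulli}. Set $\widetilde Q(x) := 2\sum_{n \geq 0}(2n+3) B_{2n+2}(-x)^n$. The proof of Theorem~\ref{thm:main-Galois} for model $\cD$ at $v=1$ yields a difference equation $G_1(s) - G_1(\tau(s)) = R_{\cD}(s)$, where $G_1(s) = \tfrac12 x_1(s)^2 Q_{\cD}(x_1(s), 0, \tfrac12)$ with $x_1(s) = -4s^2/(s-1)^2$, $\tau(s) = s/(2s+1)$, and $R_{\cD} \in \C(s)$ explicit. Since $\tau$ has $0$ as its unique fixed point and $\tau(s) = s - 2s^2 + O(s^3)$, the operator $f \mapsto f - f\circ\tau$ is bijective on $s\,\C[[s]]$, so $G_1$ is uniquely determined by this equation. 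Defining $\widetilde G_1(s) := \tfrac12 x_1(s)^2 \widetilde Q(x_1(s))$, it will therefore suffice to verify $\widetilde G_1 - \widetilde G_1\circ\tau = R_{\cD}$: this will give $\widetilde G_1 = G_1$ and, since $x_1$ is a non-constant formal substitution, $\widetilde Q = Q_{\cD}(\cdot, 0, \tfrac12)$.

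The bridge for this verification is the closed-form identity
\begin{equation*}
H(x) := \sum_{n \geq 0} d_n\,\frac{x^{2n+3}}{(2n+3)!} = 2x - x^2 \cot\!\l(\tfrac{x}{2}\r), \quad d_n := 2(-1)^n(2n+3) B_{2n+2},
\end{equation*}
which follows directly from the classical expansion $\tfrac{x}{2}\cot(x/2) = 1 + \sum_{n\geq 1}(-1)^n B_{2n}\tfrac{x^{2n}}{(2n)!}$. Combining this with $\cot'(x/2) = -\tfrac12(1+\cot^2(x/2))$ and eliminating $\cot$ yields the algebraic-differential equation $2x^2 H'(x) = x^4 + H(x)^2$. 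In the spirit of the tangent identity \cite[Eq.~(2.7)]{bostan-divizio-raschel-BELL} invoked in the proof of Theorem~\ref{thm:A-Bernoulli}, this relation is to be transported, via the parametrization $x = x_1(s)$, into the target functional identity on $\widetilde G_1$.

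For assertion (a), setting $\Phi(y) := \sum_{m \geq 0} B_{2m} y^m$, a direct rearrangement gives
\begin{equation*}
x\widetilde Q(x) = 4x\,\Phi'(-x) - 2\Phi(-x) + 2,
\end{equation*}
so $\Phi(-x)$ is a first-order linear differential polynomial in $\widetilde Q$. If $\widetilde Q$ were strongly D-algebraic, closure properties (under first-order linear ODEs and composition with algebraic functions) would make $\Phi(x^2) = \sum_{m\geq 0} B_{2m} x^{2m}$ and hence $\sum_{n\geq 0} B_n x^n = -\tfrac{x}{2} + \Phi(x^2)$ strongly D-algebraic, contradicting \cite[Thm~3]{bostan-divizio-raschel-BELL}. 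For assertion (b), extracting the coefficient of $x^{2n+4}$ in the ODE $2x^2 H' = x^4 + H^2$ gives, for $n \geq 1$, $2 d_n/(2n+2)! = \sum_{\ell=0}^{n-1} d_\ell d_{n-1-\ell}/((2\ell+3)!(2n-2\ell+1)!)$; multiplying by $(2n+4)!$ and using $\binom{2n+4}{2\ell+3} = (2n+4)!/((2\ell+3)!(2n-2\ell+1)!)$ and $(2n+4)!/(2n+2)! = 2(n+2)(2n+3)$ yields exactly the stated recurrence.

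The main obstacle is the verification in the first paragraph: the closed form $H(x) = 2x - x^2 \cot(x/2)$ concerns a Borel-like transform of $\widetilde Q$ rather than $\widetilde Q$ itself, whereas the functional equation for $G_1$ acts on the OGF $\widetilde Q$ after pull-back by $x_1(s)$. Bridging this EGF/OGF gap, i.e. producing an OGF-level Bernoulli identity analogous to the tangent identity \cite[Eq.~(2.7)]{bostan-divizio-raschel-BELL} and matching the explicit rational function $R_{\cD}(s)$, is the technical heart of the computation.
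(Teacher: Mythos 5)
Your treatment of assertions (a) and (b) is sound and essentially matches the paper's: the identity $x\widetilde Q(x)=4x\Phi'(-x)-2\Phi(-x)+2$ plus closure properties gives (a), and extracting coefficients from $2x^2H'=x^4+H^2$ (equivalently, Euler's quadratic recurrence for the $B_{2n}$, which is what the paper invokes) gives (b), valid for $n\geq 1$. But the central identity \eqref{eq:formula-D-Bernoulli} is not proved: you set up the uniqueness argument correctly, yet the verification that $\widetilde G_1-\widetilde G_1\circ\tau=R_{\cD}$ is exactly the step you defer in your last paragraph as ``the technical heart,'' and the cotangent closed form $H(x)=2x-x^2\cot(x/2)$ cannot do this job, since it is a statement about the exponential (Borel-type) transform of $\widetilde Q$, while the functional equation lives at the level of the ordinary generating function pulled back by $x_1$. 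There is no routine transport between the two; an EGF identity and an OGF identity for the same coefficient sequence carry genuinely different information (this is precisely the Pak--Yeliussizov phenomenon the paper alludes to).

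The missing ingredient is an \emph{OGF-level} Bernoulli identity, namely
\[
\beta(x)-\beta\!\left(\frac{x}{x+1}\right)=\frac{x^2}{(x+1)^2},\qquad \beta(x):=\sum_{n\geq 0}B_nx^{n+1},
\]
(first entry of Table 2 in \cite{bostan-divizio-raschel-BELL}, or \cite[Eq.~(A.7)]{AIK1024}). The paper does not reuse the parametrization from Theorem~\ref{thm:main-Galois}; instead it parametrizes the kernel curve by $x(s)=-s^2$, $y(s)=-s^2/(s+1)$, $\widetilde x(s)=x(s/(s+1))$, so that $W(x):=\tfrac{x^2}{2}Q_{\cD}(-x^2,0,\tfrac12)$ satisfies $W(x)-\tfrac{1}{(x+1)^2}W(x/(x+1))=\tfrac{x^3(x+2)}{(x+1)^3}$, and then observes that $W(x)=\beta'(x)+x-1$ solves this equation because it is exactly the derivative of the displayed identity. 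Your choice of $x_1(s)=-4s^2/(s-1)^2$ and $\tau(s)=s/(2s+1)$ makes the sought matching with a Bernoulli identity considerably less transparent, and in any case you do not carry it out; as written, the proof of \eqref{eq:formula-D-Bernoulli} has a genuine gap. (Two minor points: the difference operator $f\mapsto f-f\circ\tau$ is injective but not surjective on $s\C[[s]]$ — injectivity is all you need; and your derivation of (b) correctly yields the recurrence only for $n\geq1$, the $n=0$ case picking up the extra term from $x^4$.)
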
 

\begin{proof}
Recall that the power series $Q_{\cD}(x,y,t)$ satisfies the functional equation 
	\[ Q_{\cD}(x,y,t) = 1 + t \left( y + \frac{y}{x} + \frac{x}{y} \right) Q_{\cD}(x,y,t) - t \frac{x}{y} Q_{\cD}(x,0,t) - t \frac{y}{x} Q_{\cD}(0,y,t),	\]
therefore the following ``kernel equation'' holds 
	\begin{equation} \label{eq:redkerD}
		  \frac{x^2}{2} Q_{\cD}(x,0,\tfrac12) + \frac{{y}^2}{2} Q_{\cD}(0,y,\tfrac12) = xy ,
	\end{equation}
whenever $(x,y)$ is a point on the ``kernel curve'' $x \,y^{2} +  x^{2} + y^{2} = 2x y$.
	
This kernel curve has genus zero, and it can be rationally parametrized by
\[ x(s) =-s^2, \qquad y(s) = - \frac{s^2}{s+1}  \]
and by 
\[ \widetilde{x}(s) = -\frac{s^2}{(s+1)^2} = x \left( \frac{s}{s+1}\right), \qquad y(s) =  - \frac{s^2}{s+1} . \]
(Notice that these parametrizations are different from the one used in the proof of Theorem~\ref{thm:main-Galois}.)

From~\eqref{eq:redkerD} with $(x,y)$ replaced by $(x(s),y(s))$, respectively by $(\widetilde{x}(s),y(s))$, we deduce by subtraction that the power series $W(x):=\frac{x^2}{2} Q_{\cD}(-x^2,0,\tfrac12)$ is the unique solution in $\mathbb{Q}[[x]]$ of the functional equation
\begin{equation} \label{eq:WD}
W(x) - \frac{1}{(x+1)^2} \cdot W\left( \frac{x}{x+1}\right) = \frac{x^3(x+2)}{(x+1)^3} .
\end{equation}
Let us introduce the power series $\beta(x) := \sum_{n \geq 0} B_n x^{n+1} = x - \frac{x^2}{2} + \frac{x^3}{6} - \frac{x^5}{30} +\cdots.$ Since $B_{2k+1}= 0$ for $k>0$, we have that $\beta'(x) = 1-x + \sum_{n \geq 1} (2n+1) B_{2n} x^{2n}$.
To prove identity~\eqref{eq:formula-D-Bernoulli} it is enough to show that the power series $V(x) := \beta'(x) + x - 1$ also satisfies equation~\eqref{eq:WD}. Indeed, the fact that $V(x) = W(x)$ implies that 
\[
Q_{\cD}(-x^2,0,\tfrac12) = \left( \beta'(x) + x - 1 \right) \cdot \frac{2}{x^2} = 2 \cdot \sum_{n \geq 1} (2n+1) B_{2n} x^{2n-2},
\]
which is equivalent to~\eqref{eq:formula-D-Bernoulli}.  Finally, the fact that $V(x)$ satisfies~\eqref{eq:WD} is a direct consequence of the functional equation $\beta'(x) - \frac{1}{(x+1)^2} \cdot \beta'(x/(x+1)) = 2x/(x+1)^3$, which is implied by differentiating the equality $\beta(x) - \beta(x/(x+1)) = x^2/(x+1)^2$, itself equivalent to the first entry in Table 2 in~\cite{bostan-divizio-raschel-BELL} (see also \cite[Eq.~(A.7), p.~241]{AIK1024}).

Finally, assertion (a) follows from the strong D-transcendence of $\beta(x)$ and assertion (b) follows from Euler's quadratic recurrence $(2n+1) B_{2n} + \sum_{i=1}^{n-1} \binom{2n}{2i} B_{2i} B_{2n-2i} = 0$ for all $n>1$.
\end{proof}

\begin{remark}
\normalfont
The last part of the proof can be used to show that for any $r\in \mathbb{N}$, the power series $\sum_{n \geq 0} \frac{d_n}{(2n+r)!} x^n$ is D-algebraic.
For instance, $F_D(x) := \sum_{n \geq 0} \frac{d_n}{(2n+3)!} x^n$ satisfies
\[ x F_D(x)^2 - 4x F_D'(x) - 6F_D(x) + 1 = 0.\]
In fact, $F_D(x) = \frac{2-\sqrt{x}\, \cot \! \left(\frac{\sqrt{x}}{2}\right)}{x}$.
\end{remark}

\subsection{Model $\cB$}

Let $(M_{n})_{n\geq 0} = (1, 2, 8, 56, 608,\ldots)$
be the sequence of \textit{median Genocchi numbers} (\href{https://oeis.org/A005439}{A005439}), implicitly defined in the 
identity below in terms of Bernoulli numbers. 

\begin{theorem}
	\label{thm:B-Bernoulli}
For model $\mathcal{B}$ we have	
\begin{equation} \label{eq:formula-B-Bernoulli}
Q_{\cB}(x,0,\tfrac{1}{2}) =
\sum_{n \geq 0}
\frac{M_{n}}{2^n}
\, x^{n}
=
2 \cdot \sum_{n=0}^{\infty}
\left(-\frac{x}{2}\right)^{n}
\sum _{k=0}^{n+1}\binom{n+1}{k}
\left( {2}^{n+k+2} -1 \right) B_{n+k+2}.
\end{equation}
\end{theorem}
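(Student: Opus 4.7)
The plan is to mirror the strategy used for Theorems~\ref{thm:A-Bernoulli} and~\ref{thm:D-Bernoulli}. The idea is to invoke the rational parametrization of the kernel curve $K(x,y,\tfrac12)=0$ given in~\eqref{eq:paraBv=1} to obtain a functional equation that is satisfied uniquely (in the appropriate formal power series ring) by $G_1(s) := \tfrac12 x_1(s)^2\, Q_\cB(x_1(s),0,\tfrac12)$, and then to verify that the candidate series on the right-hand side of~\eqref{eq:formula-B-Bernoulli}, composed with $x_1(s) = -s^2/(s+2)$, satisfies the same equation.

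Concretely, I would reuse the explicit functional equation
\[
G_1(s)-G_1\!\left(\tfrac{s}{s+1}\right) = s-\tfrac{16}{3}-\tfrac{16}{(s+2)^{2}}-\tfrac{4}{3(3s+2)}+\tfrac{20}{s+2}+\tfrac{1}{(s+1)^{2}}-\tfrac{1}{s+1}
\]
already established in the proof of Theorem~\ref{thm:main-Galois} for model~$\cB$ at $v=1$. Uniqueness of the solution in $\C[[s]]$ (up to the usual ambiguity killed by prescribing the first few Taylor coefficients) follows from the fact that the homography $s\mapsto s/(s+1) = s - s^{2}+\cdots$ has $0$ as its unique fixed point and strictly decreases valuation order on nonconstant power series once a particular rational solution has been subtracted. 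Setting $H(x):= \sum_{n\geq 0}\tfrac{M_n}{2^n}x^n$ and $V(s):=\tfrac12 x_1(s)^2\,H(x_1(s))$, the theorem reduces to checking that $V(s)-V(s/(s+1))$ equals the right-hand side above.

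For this verification the cleanest route is to mimic the proof of Theorem~\ref{thm:D-Bernoulli}: use the Bernoulli representation in~\eqref{eq:formula-B-Bernoulli} and reduce the functional identity for $V$ to a finite combination of shifts and derivatives of the basic identity $\beta(x) - \beta(x/(x+1)) = x^2/(x+1)^2$ for $\beta(x) := \sum_{n\geq 0} B_n x^{n+1}$ already exploited in Section~\ref{sec:case_1/2}. An alternative is analytic: exploit the classical closed form for the exponential generating function of $(M_n/2^n)_{n\geq 0}$ (a $\sec$/$\tan$-type function) to rewrite $H(x)$ in closed form in terms of $\sqrt{-x}$, substitute $x=-s^2/(s+2)$, and reduce the identity to an elementary trigonometric one. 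The equivalence of the two formulas in~\eqref{eq:formula-B-Bernoulli} itself can be treated as a preliminary identity for $M_n$, provable from the EGF of Genocchi-type numbers and the classical formula $\tan(z) = \sum_{n\geq 1}(-1)^{n-1}\tfrac{2^{2n}(2^{2n}-1)}{(2n)!}B_{2n}z^{2n-1}$.

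The main obstacle will be the bookkeeping imposed by the degree-two parametrization $x_1(s) = -s^2/(s+2)$: pulling $H$ back through this substitution produces a series in $s$ whose singular structure at $s=-2$, $s=-1$, $s=-\tfrac23$ under $s\mapsto s/(s+1)$ must match exactly the seven rational terms on the right-hand side of the functional equation. It is not combinatorially transparent that the double Bernoulli sum in~\eqref{eq:formula-B-Bernoulli} produces precisely these poles with the correct residues; I expect the most efficient organization is to split $H$ into pieces corresponding to the three pole locations of $V$, process each separately via the $\beta$-identity and its derivatives, and then reassemble. As a sanity check, one can verify the first few coefficients $M_n \in \{1,2,8,56,608,\ldots\}$ of $H$ against the Taylor expansion of $Q_\cB(x,0,\tfrac12)$ computed from the matrix-product expression of Proposition~\ref{prop:expression_matrix_B_D} specialized at $t=\tfrac12$.
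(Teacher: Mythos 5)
Your overall architecture matches the paper's (parametrize the kernel at $t=\tfrac12$, derive a functional equation with an essentially unique power series solution, verify the candidate), and your uniqueness argument is fine, but there is a genuine gap at the central verification step. The paper does \emph{not} reuse the parametrization \eqref{eq:paraBv=1} or the seven-term difference equation from the Galois-theoretic proof; it deliberately switches to the symmetric parametrization $x(s)=-\frac{2s^2}{s+1}$, $y(s)=-\frac{2s^2}{1-s}$ and uses the diagonal symmetry $Q_{\cB}(x,0,t)=Q_{\cB}(0,x,t)$, so that the kernel equation becomes
\begin{equation*}
\frac{1}{(s+1)^2}\,f\Bigl(-\frac{2s^2}{s+1}\Bigr)+\frac{1}{(1-s)^2}\,f\Bigl(-\frac{2s^2}{1-s}\Bigr)=\frac{2}{1-s^2},\qquad f(s):=Q_{\cB}(s,0,\tfrac12).
\end{equation*}
The whole point of this choice is that, after the rescaling $h(s)=s-s^2f(-2s)$, this is \emph{exactly} the functional equation $h\bigl(s^2/(1+s)\bigr)+h\bigl(s^2/(1-s)\bigr)=2s^2$ established by Dumont and Zeng for the generating function of the median Genocchi numbers \cite[Cor.~2]{DuZe94}. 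That citation is the key external input, and it is missing from your plan. Your proposed substitute --- reducing the verification to ``shifts and derivatives'' of $\beta(x)-\beta(x/(x+1))=x^2/(x+1)^2$ --- works for models $\cA$ and $\cD$ precisely because there the candidate is a single Bernoulli sum essentially equal to $\beta'$; for model $\cB$ the candidate is the binomial double sum in \eqref{eq:formula-B-Bernoulli}, which does not decouple into such pieces, and the paper itself does not attempt this reduction. Without identifying the Dumont--Zeng identity (or reproving an equivalent statement), your verification step remains a plan rather than a proof.

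Two further concerns. Your ``alternative analytic route'' via a $\sec/\tan$-type exponential generating function for $(M_n/2^n)$ is not available: the object to be substituted into $x_1(s)$ is the \emph{ordinary} generating function of a factorially growing sequence, and the remark following the theorem in the paper explicitly notes that, unlike for $\cA$ and $\cD$, no D-algebraic exponential version of $Q_{\cB}(x,0,\tfrac12)$ comes out of this proof --- a warning sign that the trigonometric shortcut does not exist here. Finally, the equivalence of the two expressions in \eqref{eq:formula-B-Bernoulli} is itself nontrivial; the paper deduces it from \cite[Cor.~3]{DuZe94} via the Euler numbers, so it too rests on the same source rather than on the classical $\tan$ expansion you invoke.
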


\begin{proof}
The kernel curve defined by the polynomial $K(x,y,\frac12) = xy - \frac12 \left( x^2y+xy^2+x^2+y^2\right)$
can be parametrized by
\[
x(s) = -\frac{2s^2}{s+1} \quad\text{and}\quad y(s) = -\frac{2s^2}{1-s}.
\]
(Notice that, again, the parametrization used below is not the one used in 
the proof of Theorem~\ref{thm:main-Galois}.)

By plugging this parametrization into the kernel equation
\[K\left(x,y,\tfrac12\right) Q\left(x,y,\tfrac12\right) = xy - \frac{x^2}{2} Q\left(x,0,\tfrac12\right) - \frac{y^2}{2} Q\left(0,y,\tfrac12\right)\]
and by using the equality $Q(x,0,t) = Q(0,x,t)$,
we obtain that the power series $f(s) := Q(s,0;\frac12)$ satisfies  the equation
\begin{equation} \label{eq0_B}
	 \frac{1}{(s+1)^2} \, f\left( -\frac{2s^2}{s+1}\right) +
	 \frac{1}{(1-s)^2} \, f\left( -\frac{2s^2}{1-s}\right) =
       \frac{2}{1-s^2}   .
\end{equation}
Since the above equation~\eqref{eq0_B} admits a unique solution in $\mathbb{C}[[s]]$,
it is enough to prove that
 \[
 f(s) :=
\sum_{n \geq 0}
\frac{M_{n}}{2^n} \, s^{n}
 = 1 + s + 2s^2 + 7s^3 + 38 s^4 + \cdots
 \]
satisfies~\eqref{eq0_B} as well.
We deduce from \cite[Cor.~2]{DuZe94} that the power series
\[
h(s)
= s - s^2 \sum_{n \geq 0} M_{n} (-s)^{n}
 = s-s^2+2s^3-8s^4 + \cdots
\]
satisfies the equation
\[
h \left( {\frac {s^2}{1+\,s}} \right) + h \left( {\frac {s^2}{1-\,s}} \right)  = 2s^2.
\]
From there, using $h(s) = s - s^2 f(-2s)$, it follows that
$f(s)$
does indeed satisfy~\eqref{eq0_B}.

Finally, the expression in terms of Bernoulli numbers is a consequence of
the identity
\[
M_{n-1} =
2 (-1)^{n+1}   \sum _{k=0}^{n}\binom{n}{k}
\left( {2}^{n+k+1} -1 \right) B_{n+k+1} \quad \text{for all } n \geq1,
\]
itself a consequence of \cite[Cor.~3]{DuZe94},
\[
M_{n-1} = \frac{1}{4^n} \cdot \sum _{k=0}^{n}
\binom{n}{k}  
 (2k+1) E_{2k} \quad \text{for all } n \geq1 ,
\]
where $(E_{2n})_{n\geq0} = (1, 1, 5, 61, 1385, \ldots)$
(\href{https://oeis.org/A000364}{A000364})
is the sequence of Euler (``secant'') numbers.
\end{proof}	

\begin{remark}
\normalfont
Contrary to (the proofs of) Theorems~\ref{thm:A-Bernoulli} and~\ref{thm:D-Bernoulli}, we cannot infer from the proof of Theorem~\ref{thm:B-Bernoulli} that some exponential version of $Q_{\cB}(x,0,\tfrac{1}{2})$ is D-algebraic. Similarly, we are unable to deduce from Eq.~\eqref{eq:formula-B-Bernoulli} a nonlinear quadratic recurrence similar to those in part (b) of  Theorems~\ref{thm:A-Bernoulli} and~\ref{thm:D-Bernoulli}.
However, from~\eqref{eq0_B} we can deduce the following linear recurrence (of unbounded length):
\[ \sum_{ \ell = 0} ^n (-2)^\ell  \binom{2n-\ell+1}{\ell+1} b_\ell = 1, \quad \text{for all } \; \ell \geq 0.\] 
By Theorem~\ref{eq:formula-B-Bernoulli}, this identity is equivalent to the second identity in~\cite[Cor.~1]{DuZe94}.
\end{remark}

\begin{remark}
\normalfont
Contrary the models $\cA$ and $\cD$, the power series $Q_{\cB}(x,0,\tfrac{1}{2})$ has \emph{integer} coefficients.
This is a consequence of Theorem~\ref{thm:B-Bernoulli}
and of a result due to Barsky and Dumont~\cite[Thm~4]{BarskyDumont81}.
The coefficient sequence $(1, 1, 2, 7, 38, 295, 3098, \ldots)$ of $Q_{\cB}(x,0,\tfrac{1}{2})$ appears to coincide with the so-called \emph{Dellac sequence} (\href{https://oeis.org/A000366}{A000366}). This sequence has several interesting combinatorial interpretations; for instance Dellac proved in~\cite{Dellac01} that its $n$-th term counts the number of terms in the resultant of two degree-$n$ polynomials.
 Our interpretation in terms of walks for model $Q_{\cB}$ seems to be new. 
It would be interesting to have a direct understanding of this new interpretation.
\end{remark}

\begin{remark}
\normalfont
It can be shown that $Q_{\cB}(x,0,-\tfrac{1}{2}) = -2 \cdot \sum_{n \geq 0} B_{n+1} x^n$.
Indeed, the kernel at $t=-1/2$ factors $(x+y)(xy+x+y)/2$, and the unique power series solution of $x^2 f(x) + (-x/(1+x))^2 f(-x/(1+x)) = 2x^2/(x+1)$ is $f(x) = -2 \cdot \sum_{n \geq 0} B_{n+1} x^n$.
\end{remark}

\bibliographystyle{alpha}
\def\cprime{$'$}

\end{document}